\def\B{\mathcal{ B}}
\def\R{\mathbf{ R}}
\def\P{\mathcal{P}}
\long\def\red#1{{\color{red}#1}}
\begin{document}

\title{Analysis of the expected density of internal equilibria in random evolutionary multi-player multi-strategy games%\thanks{Grants or other notes
%about the article that should go on the front page should be
%placed here. General acknowledgments should be placed at the end of the article.}
}
%\subtitle{Do you have a subtitle?\\ If so, write it here}

%\titlerunning{Short form of title}        % if too long for running head

\author{Manh Hong Duong         \and
        The Anh Han %etc.
}

%\authorrunning{Short form of author list} % if too long for running head

\institute{Manh Hong Duong \at
              Mathematics Institute, University of Warwick, UK. \\
%              Tel.: +123-45-678910\\
%              Fax: +123-45-678910\\
              \email{m.h.duong@warwick.ac.uk}           %  \\
%             \emph{Present address:} of F. Author  %  if needed
           \and
           The Anh Han \at
              School of Computing, Teesside University, UK.\\
              \email{T.Han@tees.ac.uk}
}
\date{Received: date / Accepted: date}
% The correct dates will be entered by the editor

\maketitle

\begin{abstract}
In this paper, we study the distribution and behaviour of internal equilibria in a $d$-player $n$-strategy random evolutionary game where the game payoff matrix is generated from normal distributions.
The study of this paper reveals and exploits interesting connections between evolutionary game theory and random polynomial theory. The main contributions  of the paper are some qualitative and quantitative results on the expected density, $f_{n,d}$, and the expected number, $E(n,d)$, of (stable) internal equilibria. Firstly, we show that in multi-player two-strategy games, they behave asymptotically as $\sqrt{d-1}$ as $d$ is sufficiently large. Secondly, we prove that they are monotone functions of $d$. We also make a conjecture for games with more than two strategies. Thirdly, we provide numerical simulations for our analytical results and to support the conjecture. As consequences of our analysis, some qualitative and quantitative results on the distribution of zeros of a random Bernstein polynomial are also obtained.

%  We show that for multi-player  games with two strategies, the  density behaves asymptotically the same as $\sqrt{d-1}$, while the expected number of equilibria, being asymptotically bounded from above by $\sqrt{d-1}\ln(d-1)$,  satisfies a limiting behaviour,  $\lim\limits_{d\rightarrow\infty}\frac{\ln E(2,d)}{\ln(d-1)}=\frac{1}{2}$. 
%
%Moreover, both   
%are monotonically  increasing functions of $d$, while, interestingly, their scalings with respect to $d$, $\frac{f_{n,d}}{d}$ and $\frac{E_{2,d}}{d}$,  decrease with  $d$. As a result, similar monotone properties and limiting behaviors are shown for the expected number of stable equilibrium points. Constraints for the density in two-player games with an arbitrary number of strategies, are also derived.  
%Generalizing the obtained results, our conjecture regarding  asymptotic behaviour of $E(n,d)$, its monotone properties and those of $f_{n,d}$, is corroborated by numerical simulations. 
%In short, our results contribute to the  expanding mathematical foundation of random games in evolutionary game theory. 

\keywords{Random evolutionary games \and Internal equilibria \and Random polynomials \and Multi-player games}
% \PACS{PACS code1 \and PACS code2 \and more}
% \subclass{MSC code1 \and MSC code2 \and more}
\end{abstract}
\section{Introduction}
\subsection{Motivation}
Evolutionary game theory (EGT) has been proven to be a suitable mathematical framework to model biological and social  evolution   whenever the success of an individual depends on the presence or absence of other strategies  \cite{maynard-smith:1982to,hofbauer:1998mm,nowak:2006bo}. EGT was introduced in 1973 by Smith and Price \cite{SP73} as an  application of classical game theory to biological contexts,  and has since then been widely and successfully applied   to various fields, not only  biology itself, but also  ecology, population genetics, and computational   and social sciences \cite{maynard-smith:1982to,axelrod:1984yo,hofbauer:1998mm,nowak:2006bo,broom2013game,Perc2010109,HanBook2013,gokhale2014evolutionary}. In these contexts, the payoff obtained  from  game interactions is translated into reproductive fitness or social success \cite{hofbauer:1998mm,nowak:2006bo}. Those strategies that achieve higher fitness or are more successful, on average,  are favored by natural selection, thereby  increase in their frequency.  Equilibrium points  of such a dynamical system are the compositions of strategy frequencies where all the strategies have the same average fitness.  Biologically, they  predict the co-existence of different types in a population and the maintenance of polymorphism. 

As in classical game theory with the dominant concept of Nash equilibrium \cite{mclennan2005asymptotic,mclennan2005expected}, the analysis of equilibrium points in  random  evolutionary games is of great importance because it allows one to describe  various generic  properties, such as the overall complexity  of interactions and the average behaviours, in a dynamical system. Understanding properties of equilibrium points in a concrete system is important, but what if the system itself is not fixed or undefined?  Analysis of random games  is  insightful for such scenarios. To this end, it is    ambitious and desirable to answer the following general  questions:

\textbf{How are the equilibrium points distributed? How do they behave when the number of players and strategies change?}

Mathematical analysis of equilibrium points and their stability in a general (multi-player multi-strategy) evolutionary game is challenging because one would need to cope with systems of multivariate polynomial equations of high degrees (see Section \ref{sec: pre} for more details). Nevertheless,  some recent attempts,  both through  numerical and analytical approaches, have  been made. One approach is to study the probabilities of having a concrete number of equilibria, whether  all   equilibrium points or only the stable ones are counted, if the payoff entries follow a certain probability distribution \cite{gokhale:2010pn,HTG12}. This approach has the advantage that these probabilities provide   elaborate  information on the distribution of the equilibria. However, it consists of  sampling and solving of a system of  multivariate polynomial equations; hence is restricted, even when using numerical simulations, to games of a small number of  players and/or small number of strategies: it is known that it is impossible to (analytically) solve an algebraic equation of a degree  greater than $5$ \cite{able:1824aa}. Another possibility is to analyze the attainability of the patterns and the maximal number of    evolutionarily stable strategies  (ESS) \cite{maynard-smith:1982to,hofbauer:1998mm},  revealing  to some extent the complexity of the interactions.  This line of research has been  paid  much attention in  evolutionary game theory and  other biological fields such as population genetics \cite{karlin:1980aa,vickers:1988aa,haigh1989large,broom:1993pa,broom:1997aa,altenberg:2010tp,gokhale:2010pn,HTG12,gokhale2014evolutionary}.
More recently, in \cite{DH15}, the authors investigate the expected number of internal equilibria in a multi-player multi-strategy  random evolutionary game where the game payoff matrix is generated from normal distributions. By connecting  EGT and random polynomial theory, they describe a computationally implementable formula  of the mean number of  internal equilibrium  points for the general case, lower and upper bounds the  multi-player two-strategy random games, and a close-form formula for the two-player multi-strategy games.

In this paper,  we address  the aforementioned questions, i.e., of analysing  distributions and   behaviours of the internal equilibria of a random evolutionary game, in an \textit{average} manner.  More specifically, we first analyse the expected density of internal equilibrium points, $f_{n,d}(\mathbf{t})$, i.e. the expected number of  such  equilibrium points  per unit length at  point $\mathbf{t}$,  in a $d$-player $n$-strategy  random evolutionary game where the game payoff matrix is generated from a normal distribution (for short, normal evolutionary games). Here the parameter $\mathbf{t}=(t_i)_{i=1}^{n-1}$, with $t_i=\frac{x_i}{x_n}$, denotes the ratio of frequency of strategy $i\in\{1,\cdots, n-1\}$ to that of strategy $n$, respectively (more details in  Section \ref{sec: pre}). In such a random game, we then analyse the expected number  of internal equilibria, $E(n,d)$, and, as a result, characterize  the expected number  of internal \textit{stable} equilibria, $\mathit{SE}(n,d)$. We obtain both quantitative (asymptotic formula) and qualitative (monotone properties) results of $f_{n,d}$ and $E(n,d)$,  as functions of the ratios, $\mathbf{t}$, the number of players, $d$, and that of strategies, $n$. 

To obtain these results, we develop further the connection between EGT and random polynomial theory  explored in \cite{DH15}, and  more importantly,  establish appealing (previously unexplored) connections to the well-known  classes of polynomials, the  Bernstein polynomials and Legendre polynomials. In contrast to the direct approach used  in \cite{gokhale:2010pn,HTG12}, our  approach avoids sampling and solving a system of  multivariate polynomial equations, thereby  enabling us to study  games with large numbers of players and/or strategies.  

We now summarise the main results of the present paper.
\subsection{Main results}
The main analytical results of the present paper can be summarized into three categories: asymptotic behaviour of the density function and the expected number of (stable) equilibria, a connection between the density function with the Legendre polynomials, and monotonic behaviour of the density function. In addition, we  provide  numerical results and illustration for the general games when both the numbers of players and strategies are large.

To precisely describe our main results, we introduce  the following notation regarding  asymptotic behaviour of two given functions $u$ and $v\colon[0,+\infty)\rightarrow [0,+\infty)$
\begin{align*}
&u\lesssim v \quad \Leftrightarrow \quad\text{there exists a positive constant $C$ such that for all $k\in[0,+\infty)$}
\\&\hspace*{3cm} u(k)\leq C v(k),
\\&u\sim v \quad \Leftrightarrow \quad\text{it holds that}\quad u\lesssim v\quad\text{and}\quad v\lesssim u.
\end{align*}

Note that throughout the paper we sometimes put  arguments of a function  as subscripts. For instance, the expected density of internal equilibrium points, $f_{n,d}(\mathbf{t})$, besides $\mathbf{t}$,  is also analyzed  as a function of $n$ and $d$. We will explicitly state which parameter(s) is being varied whenever necessary to avoid the confusion. 

The main results of the present paper are the following. As described above,  $f_{n,d}(\mathbf{t})$ denotes the expected number of  internal  equilibrium points  per unit length at  point $\mathbf{t}$,  in a $d$-player $n$-strategy  random evolutionary game where the game payoff matrix is generated from a normal distribution;  $E(n,d)$ the  expected number  of internal equilibria;  and $\mathit{SE}(n,d)$  the expected number  of internal stable equilibria. The formal definitions of these three functions are given in Section \ref{sec: pre}.  
\begin{enumerate}[(1)]
\item In Theorem \ref{theo: concentration}, we prove the following  asymptotic behaviour  of $f_{2,d}(t)$ for all $t>0$: $f_{2,d}(t)\sim \sqrt{d-1}$. We also prove that $f_{2,d}(t)$ is always bounded from above and $\lim_{d\rightarrow\infty}\frac{f_{2,d}(t)}{d-1}=0$.
\item In Theorem \ref{theo: behavior of E2}, we prove a novel upper bound for the expected number of multi-player two-strategy random games, $E(2,d)\lesssim \sqrt{d-1}\ln(d-1)$ and obtain its limiting behaviour: $\lim\limits_{d\rightarrow\infty}\frac{\ln E(2,d)}{\ln(d-1)}=\frac{1}{2}$. This upper bound is sharper than the one obtained in  \cite[Theorem 2]{DH15}, which is, $E(2,d)\lesssim (d-1)^\frac{3}{4}$. These results  lead to two important  corollaries. First, we  obtain a sharper bound for the expected number of stable equilibria, $\mathit{SE}(2,d)\lesssim \frac{1}{2}\sqrt{d-1}\ln(d-1)$, and the corresponding  limit, $\lim\limits_{d\rightarrow\infty}\frac{\ln \mathit{SE}(2,d)}{\ln(d-1)}=\frac{1}{2}$, see Corollary \ref{cor: expected stable equi}. The second corollary, Corollary \ref{cor: expected zeros Bernstein}, is mathematically significant, in which we  obtain lower and upper bounds and a limiting behaviour of the expected number of real zeros of a random Bernstein polynomial.
\item In Theorem \ref{theo: fd interm of Pd}, we establish an expression of $f_{2,d}(t)$ in terms of the Legendre polynomial and its derivative. 
\item In Theorem \ref{theo: fd interms of Pd and Pd-1}, we express $f_{2,d}(t)$ in terms of the Legendre polynomials of two consecutive order.
\item In Theorem \ref{theo: fd/d decreases}, we prove that $\frac{f_{2,d}(t)}{d}$ is a decreasing function of $d$ for any given $t > 0$. Consequently, $\frac{E(2,d)}{d}$ and  $\frac{\mathit{SE}(2,d)}{d}$ are decreasing functions of $d$. 
\item In Proposition \ref{prop: condition for f increase}, we provide a condition for $f_{2,d}(t)$ being an increasing function of $d$ for any given $t > 0$. We conjecture that this condition holds true and support it by numerical simulation.
\item In Theorem \ref{theo: En2}, we provide an upper bound for $f_{n,2}(\mathbf{t})$. We also make a conjecture for $f_{n,d}(\mathbf{t})$ and $E(n,d)$ in the general case ($n,d\geq 3$).
\item We  offer numerical illustration for our main results in Section \ref{sec: simulation}.
\end{enumerate}
The density function $f_{n,d}(\mathbf{t})$ provides insightful information on the distribution of the internal equilibria: integrating $f_{n,d}(\mathbf{t})$ over any interval produces the expected number of real equilibria on that interval. In particular, the expected number of internal equilibria $E(n,d)$ is obtained by integrating $f_{n,d}(\mathbf{t})$ over the positive half of the space. Theorem \ref{theo: fd/d decreases} and Proposition \ref{prop: condition for f increase}, which are deduced from Theorems \ref{theo: fd interm of Pd} and \ref{theo: fd interms of Pd and Pd-1}, are qualitative statements, which tell us \textit{ how} the expected number of internal equilibria per unit length $f_{2,d}$ in a $d$-player two-strategy game changes when the number of players $d$ increases. Theorem \ref{theo: concentration} quantifies its behaviour showing that $f_{2,d}$ is approximately (up to a constant factor) equal to $\sqrt{d-1}$. The function $h(d):=\sqrt{d-1}$, as seen in Theorem \ref{theo: concentration}, certainly satisfies the properties that $h(d)$ increases but $\frac{h(d)}{d}$ decreases. Thus, it strengthens Theorem \ref{theo: fd/d decreases} and further supports Conjecture \ref{cojecture: fd increases}. Theorem \ref{theo: behavior of E2} is also a quantitative statement which provides an asymptotic estimate for the expected number of  internal (stable) equilibria.

Furthermore, it is important to note that the expected number of real zeros of a random polynomial has been extensively studied, dating back to 1932 with Block and P\'{o}lya's  seminal paper \cite{BP32} (see, for instance,~\cite{EK95} for a nice exposition and~\cite{TaoVu14,NNV15TMP} for the most recent progress). Therefore, our results, in Theorems \ref{theo: behavior of E2}, \ref{theo: fd interm of Pd} and \ref{theo: fd interms of Pd and Pd-1},  provide important, novel insights within the theory of random polynomials, but also reveal its intriguing  connections and applications  to EGT.

\subsection{Organisation of the paper}
The rest of the paper is structured as follows. In Section \ref{sec: pre}, we recall  relevant details on EGT and random polynomial theory. Section \ref{sec: 2d games} presents full analysis of the expected density function and the expect number of internal equilibria of a multi-player two-strategy game. The results on asymptotic behaviour and on the connection to Legendre polynomials and its applications are given in Sections \ref{subsec: asymptotic} and \ref{subsec: connection}, respectively. In Section \ref{sec: general games}, we  provide  analytical results for the two-player multi-strategy game and numerical simulations for the general case. Therein we also make a conjecture about an asymptotic formula for the density and the expected number of internal equilibria in the general case. In Section \ref{sec: conclusion}, we sum up and provide future perspectives. Finally, some detailed proofs are presented in the appendix.
\section{Preliminaries: replicator dynamics and random polynomials}
This section describes some relevant details of the EGT and random polynomial theory, to the extent we need here. Both are classical but the idea of using the latter to study the former has only been pointed out in our recent paper \cite{DH15}.
\label{sec: pre}
\subsection{Replicator dynamics}

The classical approach to evolutionary games is replicator dynamics  \cite{taylor:1978wv,zeeman:1980ze,hofbauer:1998mm,schuster:1983le,nowak:2006bo}, describing that whenever a strategy has a fitness larger than the  average fitness of the population, it is expected to  spread. Formally, let us consider an infinitely large population with $n$ strategies, numerated from 1 to $n$. They have frequencies $x_i$, $1 \leq i \leq n$, respectively, satisfying that $0 \leq x_i \leq 1$  and $\sum_{i = 1}^{n} x_i = 1$. The interaction of the individuals in the population is in randomly selected groups of $d$ participants, that is, they play and obtain their fitness from   $d$-player games.  We consider here symmetrical games (e.g. the public goods games and their generalizations \cite{hardin:1968mm,hauert:2002in,santos:2008xr,pacheco:2009aa,Han:2014tl}) in which the order of the participants is  irrelevant. Let $\alpha^{i_0}_{i_1,\ldots,i_{d-1}}$ be the payoff of the focal player, where $i_0$ ($1 \leq i_0 \leq n$) is the strategy of the focal player, and  $i_k$ (with $1 \leq i_k \leq n$ and $1 \leq k \leq d-1$) be the strategy of the player in position $k$. 
These payoffs form a $(d-1)$-dimensional payoff matrix \cite{gokhale:2010pn}, which satisfies  (because of the game symmetry)  
\begin{equation}
\label{eq:symmetry}
\alpha^{i_0}_{i_1,\ldots,i_{d-1}} = \alpha^{i_0}_{i_1',\ldots,i_{d-1}'},
\end{equation}
 whenever $\{i_1'\ldots,i_{d-1}'\}$ is a permutation of  $\{i_1\ldots,i_{d-1}\}$. This means that  only the fraction of each strategy in the game matters.

 The equilibrium points of the system are given by the points $(x_1, \dots, x_n)$ satisfying the condition that the  fitnesses of all strategies are the same, which can be simplified to the following  system of $n-1$ polynomials of degree $d-1$ \cite{DH15}
\begin{equation}
\label{eq: eqn for fitness2}
\sum\limits_{\substack{0\leq k_{1}, ..., k_{n}\leq d-1,\\ \sum^{n}\limits_{i = 1}k_i = d-1  }}\beta^i_{k_{1}, ...,k_{n-1} }\begin{pmatrix}
d-1\\
k_{1}, ..., k_n
\end{pmatrix} \prod\limits_{i=1}^{n}x_{i}^{k_i} = 0\quad \text{ for } i = 1, \dots, n-1,
\end{equation}
where $\beta^{i}_{k_{1}, ..., k_{n-1} } := \alpha^{i}_{k_{1}, ..., k_{n} } -\alpha^{n}_{k_{1}, ..., k_{n} } $, and  
$\begin{pmatrix}
d-1\\
k_{1}, \dots, k_{n}
\end{pmatrix} = \frac{(d-1)!}{\prod\limits _{k=1}^n k_i! }$
are the multinomial coefficients. Assuming that all the payoff entries have the same probability distribution, then all $\beta^{i}_{k_{1}, ..., k_{n-1} }$, $i = 1, \dots, n-1$,  have symmetric distributions, i.e. with mean 0  \cite{HTG12}. 

We focus on   internal equilibrium points \cite{gokhale:2010pn,HTG12,DH15}, i.e. $0 < x_i < 1$ for all $1 \leq i \leq n-1$. Hence, by using the transformation $t_i = \frac{x_i}{x_n}$, with $0< t_i < +\infty$ and $1 \leq i \leq n-1$, dividing the left hand side of the above equation by $x_n^{d-1}$ we obtain the following equation in terms of $(t_1,\ldots,t_{n-1})$ that is equivalent to~\eqref{eq: eqn for fitness2} 
\begin{equation}
\label{eq: eqn for fitnessy}
 \sum\limits_{\substack{0\leq k_{1}, ..., k_{n-1}\leq d-1,\\  \sum\limits^{n-1}_{i = 1}k_i \leq d-1  }}\beta^i_{k_{1}, ..., k_{n-1} }\begin{pmatrix}
d-1\\
k_{1}, ..., k_{n}
\end{pmatrix} \prod\limits_{i=1}^{n-1}t_{i}^{k_i} = 0\quad  \quad \text{ for } i = 1, \dots, n-1.
\end{equation}
Hence, finding an internal equilibria in a general  $n$-strategy $d$-player random evolutionary game  is equivalent to find a solution $(y_1, \dots, y_{n-1}) \in {\mathbb{R}_+}^{n-1}$ of the system of $(n-1)$ polynomials of degree $(d-1)$ in (\ref{eq: eqn for fitnessy}). This observation  links the study of generic properties of equilibrium points  in EGT to the theory of random polynomials.
\subsection{Random polynomial theory}
%As has been shown in Section~\ref{sec: game theory}, $E(n,d)$ is the same as the number of positive solutions of Eq.~\eqref{eq: eqn for fitnessy}. 
%We will apply Theorem~\ref{theo: EK95} with $a^i_{k_{1}, ..., k_{n-1} }=\beta^i_{k_{1}, ..., k_{n-1} }\begin{pmatrix}
%d-1\\
%k_{1}, ..., k_{n}
%\end{pmatrix}$ and $U=[0,\infty)^{n-1}\subset \mathbb{R}^{n-1}$.

Suppose that all $\beta^{i}_{k_{1}, ..., k_{n-1} }$ are  Gaussian distributions with mean $0$ and variance $1$, then for each $i$ ($1 \leq i \leq n-1$), $A^i=\left\{\beta^i_{k_{1}, ..., k_{n-1} }\begin{pmatrix}
d-1\\
k_{1}, ..., k_{n}
\end{pmatrix}\right\}
$ is a multivariate normal random vector with mean zero and covariance matrix $C$ given by
\begin{equation}
\label{eq: matrix C}
C=\mathrm{diag} \left(\begin{pmatrix}
d-1\\
k_{1}, ..., k_{n}
\end{pmatrix}^2\right)_{0\leq k_i\leq d-1,~\sum\limits_{i=1}^{n-1}k_i\leq d-1}.
\end{equation}
The density function $f_{n,d}$ and the expected number $E(n,d)$ of equilibria can be computed explicitly. The lemma below is a direct consequence of \cite[Theorem 7.1]{EK95} (see also \cite[Lemma 1]{DH15}). For a clarity of notation, we use bold font to denote an element in high-dimensional Euclidean space such as $\mathbf{t}=(t_1,\ldots,t_{n-1})\in \mathbb{R}^{n-1}$.
\begin{lemma}
\label{lemma: E(n,d)}
Assume that $\{A^i\}_{1\leq i\leq n-1}$ are independent normal random vectors with mean zero and covariance matrix $C$ as in \eqref{eq: matrix C}. The expected density of real zeros of Eq. \eqref{eq: eqn for fitnessy} at a point $\mathbf{t}=(t_1,\ldots, t_{n-1})$ is given by
%\begin{equation}
%\label{eq: first formula of f}
%f_{n,d}(\mathbf{t})=\pi^{-\frac{n}{2}}\Gamma\left(\frac{n}{2}\right)\left(\det\left[\frac{\partial^2}{\partial x_iy_j}(\log v(\mathbf{x})^T C v(\mathbf{y}))\big|_{\mathbf{y}=\mathbf{x}=\mathbf{t}}
%\right]_{ij}\right)^\frac{1}{2},
%\end{equation}
\begin{equation}
\label{eq: density 2}
f_{n,d}(\mathbf{t})=\pi^{-\frac{n}{2}}\Gamma\left(\frac{n}{2}\right)(\det L(\mathbf{t}))^\frac{1}{2},
%E(n,d)=\pi^{-\frac{n}{2}}\Gamma\left(\frac{n}{2}\right)\underbrace{\int_0^\infty\ldots\int_0^\infty}_{n-1\,\text{times}}\left(\det L(\mathbf{t})\right)^\frac{1}{2}dt,
\end{equation}
where $\Gamma$ denotes the Gamma function and $L(\mathbf{t})$ the matrix with entries
\begin{align*}
L_{ij}(\mathbf{t})&=\frac{\partial^2}{\partial x_iy_j}(\log v(\mathbf{x})^T C v(\mathbf{y}))\big|_{\mathbf{y}=\mathbf{x}=\mathbf{t}},
\end{align*}
with 
\begin{equation}
v(\mathbf{x})^T C v(\mathbf{y})=\sum\limits_{\substack{0\leq k_{1}, ..., k_{n-1}\leq d-1,\\ \sum\limits^{n-1}_{i = 1}k_i\leq d-1  }}\begin{pmatrix}
d-1\\
k_1,\ldots,k_{n}
\end{pmatrix}^2\prod\limits_{i=1}^nx_i^{k_i}y_i^{k_i}.
\end{equation}
As a consequence, the expected number of internal equilibria in a \emph{d}-player \emph{n-}strategy random game is determined by
\begin{equation}
\label{eq: expected number or EQ}
E(n,d)=\int_{\mathbb{R}_+^{n-1}}f_{n,d}(\mathbf{t})\,d\mathbf{t}.
\end{equation}
\end{lemma}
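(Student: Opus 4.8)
The plan is to identify each of the $n-1$ polynomial equations in \eqref{eq: eqn for fitnessy} with the zero set of a centred Gaussian field, and then to apply the Kac--Rice/Edelman--Kostlan formula for the expected density of common zeros of a system of independent such fields. Let $v(\mathbf{t})$ be the vector of monomials $\prod_{i=1}^{n-1}t_i^{k_i}$ indexed by the admissible exponents $0\le k_1,\ldots,k_{n-1}\le d-1$ with $\sum_i k_i\le d-1$, so that the $i$-th equation reads $X_i(\mathbf{t}):=\langle A^i,v(\mathbf{t})\rangle=0$. Since $A^i\sim N(0,C)$ with $C$ as in \eqref{eq: matrix C}, each $X_i$ is a centred Gaussian field on $\mathbb{R}_+^{n-1}$ whose covariance kernel is $K(\mathbf{x},\mathbf{y})=\mathbb{E}[X_i(\mathbf{x})X_i(\mathbf{y})]=v(\mathbf{x})^{T}Cv(\mathbf{y})$, i.e.\ the expression displayed in the statement; moreover $X_1,\ldots,X_{n-1}$ are i.i.d.\ because $A^1,\ldots,A^{n-1}$ are independent. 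As $C$ is diagonal with strictly positive entries the field is nondegenerate, its common zero set is a.s.\ a finite set of points with nonsingular Jacobian, and the Kac--Rice hypotheses hold.

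The core step is the Kac--Rice computation itself. Writing $F=(X_1,\ldots,X_{n-1})$ and $m:=n-1$, the expected density of zeros at $\mathbf{t}$ equals $p_{F(\mathbf{t})}(0)\,\mathbb{E}[\,|\det DF(\mathbf{t})|\,\mid\,F(\mathbf{t})=0\,]$. Because the $X_i$ are i.i.d.\ with $\mathrm{Var}\,X_i(\mathbf{t})=K(\mathbf{t},\mathbf{t})$, we have $F(\mathbf{t})\sim N(0,K(\mathbf{t},\mathbf{t})I_m)$, hence $p_{F(\mathbf{t})}(0)=(2\pi K(\mathbf{t},\mathbf{t}))^{-m/2}$. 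For the conditional factor, Gaussian regression of $\nabla X_i(\mathbf{t})$ on $X_i(\mathbf{t})$ shows that, given $X_i(\mathbf{t})=0$, the gradient is centred with covariance $\Sigma-\mu\mu^{T}/K(\mathbf{t},\mathbf{t})$, where $\Sigma_{ij}=\partial_{x_i}\partial_{y_j}K|_{\mathbf{x}=\mathbf{y}=\mathbf{t}}$ and $\mu_j=\partial_{y_j}K|_{\mathbf{x}=\mathbf{y}=\mathbf{t}}$. The elementary identity $\partial_{x_i}\partial_{y_j}\log K=\frac{\partial_{x_i}\partial_{y_j}K}{K}-\frac{(\partial_{x_i}K)(\partial_{y_j}K)}{K^{2}}$, evaluated on the diagonal and combined with the symmetry $K(\mathbf{x},\mathbf{y})=K(\mathbf{y},\mathbf{x})$, identifies this conditional covariance with $K(\mathbf{t},\mathbf{t})\,L(\mathbf{t})$. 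Thus $DF(\mathbf{t})$ conditioned on $F(\mathbf{t})=0$ has the law of $\sqrt{K(\mathbf{t},\mathbf{t})}\,G\,L(\mathbf{t})^{1/2}$ with $G$ an $m\times m$ matrix of i.i.d.\ standard normals, so the conditional expectation equals $K(\mathbf{t},\mathbf{t})^{m/2}(\det L(\mathbf{t}))^{1/2}\,\mathbb{E}|\det G|$.

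It then remains to evaluate the universal constant $\mathbb{E}|\det G|$. Writing $|\det G|$ as the product of the distances of the successive rows of $G$ to the span of the preceding ones gives $|\det G|\stackrel{d}{=}\prod_{j=1}^{m}\chi_j$, with independent $\chi_j$ of $j$ degrees of freedom, so $\mathbb{E}|\det G|=\prod_{j=1}^{m}\sqrt{2}\,\Gamma\bigl(\tfrac{j+1}{2}\bigr)/\Gamma\bigl(\tfrac{j}{2}\bigr)=2^{m/2}\Gamma\bigl(\tfrac{m+1}{2}\bigr)/\Gamma\bigl(\tfrac12\bigr)$ by telescoping. Substituting back into the Kac--Rice identity, the powers of $K(\mathbf{t},\mathbf{t})$ cancel and the factors of $2$ collapse, leaving $f_{n,d}(\mathbf{t})=\pi^{-(m+1)/2}\Gamma\bigl(\tfrac{m+1}{2}\bigr)(\det L(\mathbf{t}))^{1/2}=\pi^{-n/2}\Gamma\bigl(\tfrac{n}{2}\bigr)(\det L(\mathbf{t}))^{1/2}$, which is \eqref{eq: density 2}; and \eqref{eq: expected number or EQ} follows by integrating the nonnegative density over the admissible region $\{t_i>0\}$ and invoking Tonelli's theorem to exchange expectation and integral. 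Equivalently, once the kernel $v(\mathbf{x})^{T}Cv(\mathbf{y})$ is identified, the statement is precisely the specialisation of \cite[Theorem 7.1]{EK95} used in \cite{DH15}.

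The main obstacle is not a single hard computation but the careful checking of the hypotheses underpinning the formula: one must verify that $L(\mathbf{t})$ is positive definite for every $\mathbf{t}\in\mathbb{R}_+^{n-1}$ (so that $(\det L(\mathbf{t}))^{1/2}$ is genuine and the conditional Gaussian law above is well defined), and that the common zero set is a.s.\ finite with nonsingular Jacobian so that the Kac--Rice/co-area argument applies; both are consequences of $C$ being diagonal with strictly positive entries, but they are the only places where anything could go wrong, together with matching the normalisation conventions of \cite[Theorem 7.1]{EK95}.
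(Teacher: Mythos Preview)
Your proposal is correct. The paper does not give a proof at all: it simply states that the lemma ``is a direct consequence of \cite[Theorem 7.1]{EK95} (see also \cite[Lemma 1]{DH15})'' and moves on. You instead re-derive that very result from scratch via the Kac--Rice formula, computing the conditional covariance of the gradient, identifying it with $K(\mathbf{t},\mathbf{t})L(\mathbf{t})$, and evaluating the universal constant $\mathbb{E}|\det G|$ explicitly through the Bartlett decomposition. This is more than the paper asks for---and you acknowledge as much in your final sentence---but it is a genuine proof rather than a citation, and every step checks out (in particular the telescoping product for $\mathbb{E}|\det G|$ and the cancellation of the $K(\mathbf{t},\mathbf{t})^{m/2}$ factors). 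The only caveat is the one you flag yourself: the regularity hypotheses for Kac--Rice (nondegeneracy of $L(\mathbf{t})$, a.s.\ finiteness of the zero set) are asserted rather than verified in detail, but these are exactly the hypotheses packaged into \cite[Theorem 7.1]{EK95}, so you are on the same footing as the paper.
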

Note  that the assumption in Lemma \ref{lemma: E(n,d)} is quite limited when applying to games with more than two strategies as in that case the independence of the $\alpha$ terms does not carry over into the independence of $\beta$ terms, see Remark \ref{rem: assumption} for a detailed discussion. 

%Denote by $L(\mathbf{t})$ the matrix with entries
%\begin{align*}
%L_{ij}(\mathbf{t})&=\frac{\partial^2}{\partial x_iy_j}(\log v(\mathbf{x})^T C v(\mathbf{y}))\big|_{\mathbf{y}=\mathbf{x}=\mathbf{t}},
%\end{align*}
%then $f_{n,d}$ can be shortened as
%\begin{equation}
%\label{eq: density 2}
%f_{n,d}(\mathbf{t})=\pi^{-\frac{n}{2}}\Gamma\left(\frac{n}{2}\right)(\det L(\mathbf{t}))^\frac{1}{2}.
%E(n,d)=\pi^{-\frac{n}{2}}\Gamma\left(\frac{n}{2}\right)\underbrace{\int_0^\infty\ldots\int_0^\infty}_{n-1\,\text{times}}\left(\det L(\mathbf{t})\right)^\frac{1}{2}dt.
%\end{equation}
\section{Multi-player two-strategy games}
\label{sec: 2d games}
We provide mathematical analysis of the expected density function $f_{2,d}(t)$ and the expected number of equilibria $E(2,d)$ for a multi-player two-strategy game. Section \ref{subsec: asymptotic} presents asymptotic behaviour. A connection to Legendre polynomials and its applications are given in \ref{subsec: connection}. In Section \ref{subsec: connection}, further applications of this connection to study monotonicity of the density function are explored.
\subsection{Asymptotic behaviour of the density and the expected number of equilibria}
\label{subsec: asymptotic}
In the case of multi-player two-strategy games ($n=2$), the  system of polynomial equations in   \eqref{eq: eqn for fitness2} becomes a univariate polynomial equation
\begin{equation}
\label{eq: eqn for fitness d2}
\sum\limits_{k=0}^{d-1}\beta_k\begin{pmatrix}
d-1\\
k
\end{pmatrix}y^k\,(1-y)^{d-1-k}=0,
\end{equation}
where $y$ is the fraction of strategy 1 (i.e., $1-y$ is that of strategy 2) and $\beta_k$ is  the payoff to strategy 1 minus that to strategy 2 obtained in a $d$-player interaction with $k$ other participants using strategy 1.
It is worth noticing that
\begin{equation}
b_{k,d}:=\begin{pmatrix}
d-1\\
k
\end{pmatrix}y^k\,(1-y)^{d-1-k}, \quad k=0,\ldots,d-1,
\end{equation}
is the Bernstein basis polynomials of degree $d-1$ \cite{Pet99,Kow06}. Therefore, the left-hand side of \eqref{eq: eqn for fitness d2} is a random Bernstein polynomial of degree $d-1$. As a by-product of our analysis, see Theorem \ref{theo: behavior of E2}, we will later obtain an asymptotic formula of the expected real zeros of a random Bernstein polynomial.

Letting $t=\frac{y}{1-y}$ ($t  \in {\mathbb{R}_+}$), Eq. \eqref{eq: eqn for fitness d2} is simplified to
\begin{equation}
\label{eq: eqn for y}
\sum\limits_{k=0}^{d-1}\beta_k\begin{pmatrix}
d-1\\
k
\end{pmatrix}t^k=0.
\end{equation}

The expected density of real zeros of this equation at a point $t$ is $f_{2,d}(t)$.  For  simplicity of notation, from now on we write $f_{d}(t)$ instead of $f_{2,d}(t)$.  We recall some properties of the density function $f_{d}(t)$ from \cite[Proposition 1]{DH15} that will be used in the sequel.
\begin{proposition}[\cite{DH15}]
\label{pro: properties of f}
The following properties hold
\begin{enumerate}[1)]
\item $f_{d}(0)=\frac{d-1}{\pi},\quad f_{d}(1)=\frac{d-1}{2\pi}\frac{1}{\sqrt{2d-3}}$.
\item $f_{d}(t)=\frac{1}{2\pi}\left[\frac{1}{t}G'(t)\right]^\frac{1}{2}$,
where
\begin{equation}
\label{eq: Md}
G(t)=t\frac{\frac{d}{dt}M_d(t)}{M_d(t)}=t\frac{d}{dt}\log M_d(t)\quad \text{with}\quad  M_d(t)=\sum\limits_{k=0}^{d-1}\begin{pmatrix}
d-1\\
k
\end{pmatrix}^2t^{2k}.
\end{equation}
\item $f_{d}(t)$ has an alternative representation
\begin{equation}
\label{eq: fourth property}
(2\pi f_{d}(t))^2=\sum\limits_{i=1}^{d-1}\frac{4r_i}{(t^2+r_i)^2},
\end{equation}
where $r_i>0$ satisfies that $M_d(t)=\prod\limits_{i=1}^{d-1}(t^2+r_i)$.
\item $t\mapsto f_d(t)$ is a decreasing function.
\item $f_d\left(\frac{1}{t}\right)=t^2 f(t)$.
\item $E(2,d)=\int_0^\infty f_d(t)\,dt=2\int_0^1 f_d(t)\,dt$.
\end{enumerate}
\end{proposition}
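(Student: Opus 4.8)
Everything is a consequence of Lemma~\ref{lemma: E(n,d)} with $n=2$, where $\pi^{-n/2}\Gamma(n/2)=1/\pi$ and $L(\mathbf t)$ degenerates to the scalar $L(t)=\partial_x\partial_y\log K(x,y)\big|_{x=y=t}$ with $K(x,y)=\sum_{k=0}^{d-1}\binom{d-1}{k}^2x^ky^k$, so that $f_d(t)=\tfrac1\pi\sqrt{L(t)}$. The structural remark that makes item~2) work is that $K$ depends on $(x,y)$ only through $u=xy$: setting $N(u)=\sum_{k=0}^{d-1}\binom{d-1}{k}^2u^k$ one has $K(x,y)=N(xy)$ and $M_d(t)=N(t^2)$. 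A direct differentiation gives $L(t)=\dfrac{N'(t^2)}{N(t^2)}+t^2\Big(\dfrac{N'}{N}\Big)'(t^2)$, while from $G(t)=t\tfrac{d}{dt}\log M_d(t)=2t^2\dfrac{N'(t^2)}{N(t^2)}$ one checks $G'(t)=4tL(t)$; hence $f_d(t)=\tfrac1{2\pi}\big[\tfrac1t\,G'(t)\big]^{1/2}$, which is item~2).

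For item~3) the plan is to first establish that $N$ (equivalently $M_d$, viewed as a polynomial in $t^2$) has only \emph{real negative} roots, so $M_d(t)=\prod_{i=1}^{d-1}(t^2+r_i)$ with $r_i>0$. This is precisely where the Legendre polynomials enter: from the identity $N(u)=(1-u)^{d-1}P_{d-1}\!\big(\tfrac{1+u}{1-u}\big)$ the roots of $N$ are the images of the $d-1$ simple zeros $\xi\in(-1,1)$ of $P_{d-1}$ under $\xi\mapsto\tfrac{\xi-1}{\xi+1}<0$ (alternatively, real-rootedness follows from a Malo/Hadamard-product argument applied to $(1+u)^{d-1}$). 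Once the factorisation is in hand, $\log M_d(t)=\sum_i\log(t^2+r_i)$ yields $G(t)=\sum_i\frac{2t^2}{t^2+r_i}$ and $\tfrac1t\,G'(t)=\sum_i\frac{4r_i}{(t^2+r_i)^2}$; squaring item~2) gives $(2\pi f_d(t))^2=\sum_i\frac{4r_i}{(t^2+r_i)^2}$.

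Items 1), 4) and 5) are then short consequences of 2)--3). For 4), each term $t\mapsto\frac{4r_i}{(t^2+r_i)^2}$ is strictly decreasing on $(0,\infty)$, so $f_d^2$ and hence $f_d$ is decreasing. For 5), since $\binom{d-1}{k}=\binom{d-1}{d-1-k}$ the polynomial $N$ is palindromic, so the root multiset $\{r_i\}$ is invariant under $r\mapsto1/r$; reindexing the sum in 3) accordingly gives $(2\pi f_d(1/t))^2=t^4(2\pi f_d(t))^2$. For 1), letting $t\to0$ in the sum of 3) gives $(2\pi f_d(0))^2=4\sum_i1/r_i$, and comparing the $u^0$ and $u^1$ coefficients of $N=\prod_i(u+r_i)$ yields $\prod_ir_i=1$ and $\sum_i1/r_i=(d-1)^2$, whence $f_d(0)=\frac{d-1}\pi$; for the value at $t=1$ I would instead differentiate $G=t\,M_d'/M_d$ directly and evaluate, using the Vandermonde sum $M_d(1)=\binom{2d-2}{d-1}$ together with $\sum_kk\binom{d-1}{k}^2=\tfrac{d-1}2\binom{2d-2}{d-1}$ and $\sum_kk^2\binom{d-1}{k}^2=\tfrac{(d-1)^3}{2(2d-3)}\binom{2d-2}{d-1}$; this collapses to $G'(1)=\frac{(d-1)^2}{2d-3}$, giving $f_d(1)=\frac{d-1}{2\pi\sqrt{2d-3}}$. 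Finally 6) is immediate: $E(2,d)=\int_0^\infty f_d(t)\,dt$ is \eqref{eq: expected number or EQ} for $n=2$, and the substitution $t=1/s$ in $\int_1^\infty$ combined with 5) turns $\int_1^\infty f_d$ into $\int_0^1 f_d$, so $E(2,d)=2\int_0^1f_d(t)\,dt$.

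The only genuinely non-routine ingredient is the real-rootedness of $N$ underlying the partial-fraction form in 3); everything else is differentiation plus bookkeeping with binomial identities, with the clean evaluation at $t=1$ being the other place that demands a little care.
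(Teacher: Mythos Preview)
Your proposal is correct. Note, however, that the paper does \emph{not} prove this proposition at all: it is quoted verbatim from \cite[Proposition~1]{DH15} and used as a black box. So there is no ``paper's own proof'' to compare against; what you have written is a self-contained argument that the paper omits.

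A couple of remarks on how your proof sits relative to the paper's structure. For item~3) you invoke the identity $N(u)=(1-u)^{d-1}P_{d-1}\!\big(\tfrac{1+u}{1-u}\big)$ to get real-rootedness of $N$; this is exactly Lemma~\ref{lem: connection btw Md and Pd}, which in the paper appears \emph{after} Proposition~\ref{pro: properties of f} and is presented as one of the new observations. There is no logical circularity---the identity is proved independently from \eqref{eq: explicit reprentation of Legendre}---but if you want the proposition to stand on its own without forward references, your alternative via the Schur/Mal\^o theorem (the Hadamard square of $(1+u)^{d-1}$ has only real negative zeros) is the cleaner route. Everything else---the chain-rule derivation of item~2), the partial-fraction sum for item~3), the palindromy argument for item~5), the Vieta/Vandermonde computations for item~1), and the substitution for item~6)---is correct and essentially forced.
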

\begin{example}
Below are some  examples of $f_d(t)$, with $d = 2, 3$ and 4  
\begin{equation*}
f_2(t)=\frac{1}{\pi}\frac{1}{1+t^2},\quad f_3(t)=\frac{2}{\pi}\frac{\sqrt{1+t^2+t^4}}{1+4t^2+t^4},\quad f_4(t)=\frac{3}{\pi}\frac{\sqrt{1+4t^2+10t^4+4t^6+t^8}}{1+9t^2+9t^4+t^6}.
\end{equation*}
\end{example} 

We recall that $t=\frac{y}{1-y}$, where $y$ is the fraction of strategy 1. We can write the density in terms of $y$ using the change of variable formula as follows.
\begin{equation*}
f_d(t)\,dt=f_d\Big(\frac{y}{1-y}\Big)\frac{1}{(1-y)^2}\,dy.
\end{equation*}
Define $g_d(y)$ to be the density on the right-hand side of the above expression, i.e.,
\begin{equation}
g_d(y):=f_d\Big(\frac{y}{1-y}\Big)\frac{1}{(1-y)^2}.
\end{equation}
The following lemma is an interesting property of the density function $g_d$, which explains the symmetry of the strategies (swapping the index labels converts an equilibrium at $y$ to one at $1-y$).
Numerical simulations in Section \ref{sec: simulation} further illustrate this property (see Figure \ref{fig:f2d_depend_frequency_x}).
\begin{lemma} 
\label{lemma:symmetry of g}
The function $y\mapsto g_d(y)$ is symmetric about the line $y=\frac{1}{2}$, i.e.,
\begin{equation*}
g_d(y)=g_d(1-y).
\end{equation*}
\end{lemma}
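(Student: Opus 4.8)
The plan is to reduce everything to the reflection identity $f_d\!\left(\tfrac{1}{t}\right) = t^2 f_d(t)$, which is item~5 of Proposition~\ref{pro: properties of f}. The key observation is that the involution $y \mapsto 1-y$ on the open interval $(0,1)$ corresponds, under the change of variables $t = \frac{y}{1-y}$, precisely to the involution $t \mapsto \frac{1}{t}$ on $\mathbb{R}_+$: if $y' = 1-y$ then $t' = \frac{y'}{1-y'} = \frac{1-y}{y} = \frac{1}{t}$.

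First I would write out the definition of $g_d$ at the reflected point:
\[
g_d(1-y) = f_d\!\left(\frac{1-y}{y}\right)\frac{1}{y^2} = f_d\!\left(\frac{1}{t}\right)\frac{1}{y^2}.
\]
Then I would apply item~5 of Proposition~\ref{pro: properties of f} to replace $f_d(1/t)$ by $t^2 f_d(t)$, obtaining $g_d(1-y) = t^2 f_d(t)\,\frac{1}{y^2}$. Finally, substituting $t^2 = \frac{y^2}{(1-y)^2}$ cancels the factor $y^2$ and leaves
\[
g_d(1-y) = \frac{f_d(t)}{(1-y)^2} = f_d\!\left(\frac{y}{1-y}\right)\frac{1}{(1-y)^2} = g_d(y),
\]
which is the claim.

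There is essentially no obstacle here: the statement is a direct consequence of the already-established scaling property of $f_d$, and the only point requiring care is the bookkeeping of the Jacobian factors $(1-y)^{-2}$ versus $y^{-2}$ appearing in the two expressions. One could alternatively prove the lemma from scratch by returning to the definition of $g_d$ as the density of real zeros of the random Bernstein polynomial in~\eqref{eq: eqn for fitness d2}, noting that relabelling strategies $1 \leftrightarrow 2$ sends $y \mapsto 1-y$ while preserving the symmetric joint distribution of the coefficients $\beta_k$; but invoking Proposition~\ref{pro: properties of f} is cleaner and avoids re-deriving that distributional symmetry.
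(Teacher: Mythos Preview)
Your proof is correct and is essentially identical to the paper's own argument: both compute $g_d(1-y)=f_d\!\left(\tfrac{1-y}{y}\right)\tfrac{1}{y^2}$, apply the reflection identity $f_d(1/t)=t^2 f_d(t)$ from item~5 of Proposition~\ref{pro: properties of f}, and simplify the resulting factors to recover $g_d(y)$. The only difference is that you introduce the auxiliary variable $t$ explicitly, whereas the paper writes the chain of equalities directly in terms of $y$.
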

\begin{proof}
We have
\begin{align*}
g_d(1-y)=f_d\Big(\frac{1-y}{y}\Big)\frac{1}{y^2}=f_d\Big(\frac{y}{1-y}\Big)\frac{y^2}{(1-y)^2}\frac{1}{y^2}=f_d\Big(\frac{y}{1-y}\Big)\frac{1}{(1-y)^2}=g_d(y),
\end{align*}
where we have used the  fifth property in Proposition \ref{pro: properties of f} to obtain the second equality above.
\end{proof}

The following  theorem provides an upper bound and asymptotic behaviour for $f_{d}(t)$ and its scaling with respect to $d$.
\begin{theorem}[Asymptotic behaviour of the density function] The following statement holds
\label{theo: concentration}
\begin{equation}
\label{eq: estimate for f}
f_{d}(t)\leq \min\left\{\frac{\sqrt{d-1}}{2\pi\,t},\frac{d-1}{\pi}\right\} \quad \text{for}\quad t\geq 0.
\end{equation}
As a consequence, for any given $t>0$
\begin{equation}
\label{eq: limit of f}
\frac{f_{d}(t)}{d-1}\leq \frac{1}{\pi},\quad\lim_{d\rightarrow \infty}\frac{f_{d}(t)}{d-1}=0,
\end{equation}
and furthermore
\begin{equation}
f_{d}(t)\sim \sqrt{d-1}.
\end{equation}
\end{theorem}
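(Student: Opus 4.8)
The plan is to establish the two-sided bound $f_d(t) \sim \sqrt{d-1}$ by combining the upper bound already displayed in \eqref{eq: estimate for f} with a matching lower bound, both holding uniformly in the relevant range and with constants independent of $d$. For the upper bound there is nothing new to do: from \eqref{eq: estimate for f} we immediately get $f_d(t) \le \frac{\sqrt{d-1}}{2\pi t}$ for every fixed $t>0$, so $f_d(t) \lesssim \sqrt{d-1}$ with constant $\frac{1}{2\pi t}$ (here $t$ is fixed, so this is a legitimate constant in the sense of the $\lesssim$-notation introduced for functions of $d$). The real content is the reverse inequality $\sqrt{d-1} \lesssim f_d(t)$, i.e.\ producing a constant $c(t) > 0$ with $f_d(t) \ge c(t)\sqrt{d-1}$ for all large $d$.

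For the lower bound I would work from the representation in part (2) of Proposition \ref{pro: properties of f}, namely $f_d(t) = \frac{1}{2\pi}\big[\frac{1}{t} G'(t)\big]^{1/2}$ with $G(t) = t\,\frac{d}{dt}\log M_d(t)$ and $M_d(t) = \sum_{k=0}^{d-1}\binom{d-1}{k}^2 t^{2k}$. The key observation is that $M_d$ is, up to normalization, the moment generating object of the random variable $K$ that puts mass proportional to $\binom{d-1}{k}^2 t^{2k}$ on $k \in \{0,\dots,d-1\}$; concretely, writing $s = \log t$ one has $G = \frac{1}{2}\frac{d}{ds}\log M_d$ and $\frac{1}{2}tG'(t) = \frac{1}{4}\frac{d^2}{ds^2}\log M_d = \frac{1}{4}\operatorname{Var}(K)$. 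So $f_d(t) = \frac{1}{2\pi}\big[\frac{1}{t^2}\cdot\frac{1}{2}\operatorname{Var}(K)\big]^{1/2} = \frac{1}{2\pi t}\big[\tfrac12\operatorname{Var}(K)\big]^{1/2}$, and it suffices to show $\operatorname{Var}(K) \gtrsim d-1$ for fixed $t$. Since $\binom{d-1}{k}^2 t^{2k}$ is a rescaled squared-binomial weight, $K$ is a sum/convolution-type quantity concentrated near $\frac{(d-1)t^2}{1+t^2}$ (for $t=1$, near $(d-1)/2$) with fluctuations of order $\sqrt{d-1}$ — this is exactly the local CLT heuristic for the distribution with weights $\binom{d-1}{k}^2$. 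To make $\operatorname{Var}(K) \ge c(t)(d-1)$ rigorous I would either (i) compare the weights $\binom{d-1}{k}^2 t^{2k}$ directly to ordinary binomial weights $\binom{d-1}{k} p^k(1-p)^{d-1-k}$ via the inequality $\binom{2(d-1)}{2k} \le \binom{d-1}{k}^2 \cdot(\text{polynomial factor})$ and extract the variance lower bound from the known binomial variance; or (ii) use the factorization $M_d(t) = \prod_{i=1}^{d-1}(t^2 + r_i)$ from part (3), which gives $\frac{1}{2}tG'(t) = \sum_{i=1}^{d-1}\frac{t^2 r_i}{(t^2+r_i)^2}$, so that $(2\pi f_d(t))^2 = \sum_{i=1}^{d-1}\frac{4r_i}{(t^2+r_i)^2}$ — exactly \eqref{eq: fourth property} — and then show that a positive fraction of the roots $r_i$ stay in a compact subinterval of $(0,\infty)$ bounded away from $0$ and $\infty$, so each contributes $\Theta(1)$ to the sum. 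Route (ii) reduces everything to a statement about the location of the zeros of $M_d$, which are the squares of the moduli of the roots of a Bernstein-type polynomial and are known to be interlaced/bounded.

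Finally I would assemble: the upper estimate gives $f_d(t)\le\frac{\sqrt{d-1}}{2\pi t}$, the lower estimate gives $f_d(t)\ge\frac{\sqrt{c(t)(d-1)}}{2\pi t}$ (equivalently $2\pi f_d(t)t = [\tfrac12\operatorname{Var}(K)]^{1/2} \ge \sqrt{c(t)(d-1)/2}$), hence $f_d(t)\sim\sqrt{d-1}$ for each fixed $t>0$. The claims in \eqref{eq: limit of f} are then immediate corollaries of the upper bound alone: $\frac{f_d(t)}{d-1}\le\frac{1}{\pi}$ is the second term in the $\min$ in \eqref{eq: estimate for f}, and $\frac{f_d(t)}{d-1}\le\frac{\sqrt{d-1}}{2\pi t(d-1)}=\frac{1}{2\pi t\sqrt{d-1}}\to 0$.

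I expect the main obstacle to be the lower bound, specifically controlling $\operatorname{Var}(K)$ (equivalently, showing the sum $\sum_i \frac{4r_i}{(t^2+r_i)^2}$ is $\gtrsim d-1$) with an \emph{explicit} constant that is uniform in $d$. The concentration/local-CLT intuition makes the answer obvious, but turning "the weights $\binom{d-1}{k}^2 t^{2k}$ look Gaussian with width $\sqrt{d-1}$" into a clean inequality requires either a careful Stirling/Laplace asymptotic analysis of $\log M_d$ near its dominant term, or a nontrivial input about the real-rootedness and root-spacing of $M_d$ (note $M_d(t) = \prod(t^2+r_i)$ already presupposes all $r_i > 0$, i.e.\ that $M_d$ as a polynomial in $t^2$ has only negative real roots — this should be justified, e.g.\ via the fact that $\sum\binom{d-1}{k}^2 z^k = {}_2F_1(-(d-1),-(d-1);1;z)$ relates to a Jacobi/Legendre polynomial, which is exactly the connection Theorems \ref{theo: fd interm of Pd} and \ref{theo: fd interms of Pd and Pd-1} exploit). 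A second, more routine nuisance is that the constant $c(t)$ degenerates as $t\to 0$ or $t\to\infty$, so the equivalence $f_d(t)\sim\sqrt{d-1}$ is genuinely pointwise in $t$ and not uniform; the statement and proof should be phrased accordingly.
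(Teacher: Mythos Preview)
Your proposal treats the upper bound \eqref{eq: estimate for f} as already established input, but it is the first claim of the theorem and requires proof. The paper derives it directly from Proposition \ref{pro: properties of f}: the bound $f_d(t)\le \frac{d-1}{\pi}$ is just $f_d(t)\le f_d(0)$ by monotonicity (part 4) together with the explicit value $f_d(0)=\frac{d-1}{\pi}$ (part 1); the bound $f_d(t)\le\frac{\sqrt{d-1}}{2\pi t}$ comes from the representation $(2\pi f_d(t))^2=\sum_{i=1}^{d-1}\frac{4r_i}{(t^2+r_i)^2}$ (part 3) and the AM--GM inequality $(t^2+r_i)^2\ge 4r_it^2$, which makes every summand at most $1/t^2$.

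For the lower bound you set up a variance/local-CLT argument, or alternatively a root-location argument for the $r_i$, and you rightly flag this as the hard part. But you have overlooked a one-line route, which is what the paper does: Proposition \ref{pro: properties of f} already records that $t\mapsto f_d(t)$ is decreasing (part 4) and that $f_d(1)=\frac{d-1}{2\pi\sqrt{2d-3}}$ (part 1). Hence for $0<t\le 1$ one has immediately
\[
f_d(t)\ge f_d(1)=\frac{d-1}{2\pi\sqrt{2d-3}}\sim\sqrt{d-1},
\]
and for $t\ge 1$ the inversion symmetry $f_d(t)=t^{-2}f_d(1/t)$ (part 5) gives $f_d(t)\ge t^{-2}f_d(1)$, again of order $\sqrt{d-1}$. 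No Stirling expansion, no control of $\operatorname{Var}(K)$, and no information about the spacing of the $r_i$ is needed. Your observation that the implicit constants degenerate as $t\to 0$ or $t\to\infty$ is correct and visible in these bounds, but the argument itself is elementary once you use monotonicity and the exact value at $t=1$.
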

\begin{proof}
It follows from the first and the fourth properties in Proposition \ref{pro: properties of f} that
\begin{equation}
\label{eq: first bound of f}
f_{d}(t)\leq f_{d}(0)=\frac{d-1}{\pi}.
\end{equation}
On the other hand, according to the third property in Proposition \ref{pro: properties of f}, we have
\begin{equation*}
(2\pi f_{d}(t))^2=\sum_{i=1}^{d-1}\frac{4 r_i}{(t^2+r_i)^2}.
\end{equation*}
Since $(t^2+r_i)^2\geq 4\, r_it^2$, it follows that for any $t\neq 0$
\begin{equation*}
(2\pi f_{d}(t))^2\leq \sum_{i=1}^{d-1}\frac{1}{t^2}=\frac{d-1}{t^2},
\end{equation*}
which is equivalent to $ f_{d}(t)\leq \frac{\sqrt{d-1}}{2\pi\,t}$. Hence, the upper bound  of $f_{d}(t)$ in \eqref{eq: estimate for f} holds. 

The upper bound and limit in \eqref{eq: limit of f} are obvious consequences of \eqref{eq: estimate for f}. It remains to prove the asymptotic behaviour of $f_{d}(t)$ as a function of $d$. Using \eqref{eq: estimate for f} and the fourth property in Proposition \ref{pro: properties of f}, it follows that, for $0< t\leq 1$
\begin{equation*}
\frac{d-1}{2\pi}\frac{1}{\sqrt{2d-3}}= f_d(1)\leq f_{d}(t)\leq \frac{\sqrt{d-1}}{2\pi\,t}, 
\end{equation*}
and for $t\geq 1$
\begin{equation*}
\frac{1}{t^2}\frac{d-1}{2\pi}\frac{1}{\sqrt{2d-3}}=\frac{1}{t^2}f_d(1)\leq \frac{1}{t^2}f_{d}\left(\frac{1}{t}\right)=f_{d}(t)\leq f_d(1)=\frac{d-1}{2\pi}\frac{1}{\sqrt{2d-3}}.
\end{equation*}
From these estimates, we deduce that $f_{d}(t)\sim \sqrt{d-1}$ for any $t>0$.
\end{proof}
We numerically illustrate Theorem \ref{theo: concentration} in Section \ref{sec: simulation}, see Figures \ref{fig:fd}.
%We provide an alternative proof without using \cite[Eq. (27)]{DH15}

\begin{theorem}[Asymptotic behaviour of $E(2,d)$]
\label{theo: behavior of E2}
It holds that 
\begin{equation}
\label{eq: improve upper bound}
\sqrt{d-1}\lesssim E(2,d)\lesssim \sqrt{d-1}\ln(d-1).
\end{equation}
Furthermore, we obtain the following asymptotic formula for $E(2,d)$
\begin{equation}
\label{eq: asymptotic of E}
\lim\limits_{d\rightarrow\infty}\frac{\ln E(2,d)}{\ln(d-1)}=\frac{1}{2}.
\end{equation}
\end{theorem}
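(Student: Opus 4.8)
The plan is to obtain the two bounds in \eqref{eq: improve upper bound} by integrating the pointwise estimates on $f_d(t)$ from Theorem \ref{theo: concentration}, and then to deduce the logarithmic asymptotics \eqref{eq: asymptotic of E} by squeezing $\ln E(2,d)$ between the logarithms of the two sides. For the lower bound, I would use property 6) in Proposition \ref{pro: properties of f}, namely $E(2,d)=2\int_0^1 f_d(t)\,dt$, together with the monotonicity (property 4) and the explicit value $f_d(1)=\frac{d-1}{2\pi\sqrt{2d-3}}$: since $f_d$ is decreasing, $f_d(t)\ge f_d(1)$ on $[0,1]$, so $E(2,d)\ge 2f_d(1)=\frac{d-1}{\pi\sqrt{2d-3}}\sim \sqrt{d-1}$, which gives $\sqrt{d-1}\lesssim E(2,d)$.

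For the upper bound I would again split the integral at $t=1$ and use $E(2,d)=2\int_0^1 f_d(t)\,dt$. On $(0,1]$ the bound $f_d(t)\le\min\{\frac{d-1}{\pi},\frac{\sqrt{d-1}}{2\pi t}\}$ from \eqref{eq: estimate for f} is the key input: the two expressions in the minimum cross at $t=t^*:=\frac{1}{2\sqrt{d-1}}$. On $[0,t^*]$ I bound $f_d$ by $\frac{d-1}{\pi}$, contributing $\frac{d-1}{\pi}\cdot t^*=\frac{\sqrt{d-1}}{2\pi}$; on $[t^*,1]$ I bound $f_d$ by $\frac{\sqrt{d-1}}{2\pi t}$, contributing $\frac{\sqrt{d-1}}{2\pi}\int_{t^*}^1\frac{dt}{t}=\frac{\sqrt{d-1}}{2\pi}\ln\frac{1}{t^*}=\frac{\sqrt{d-1}}{2\pi}\ln\!\big(2\sqrt{d-1}\big)$. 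Adding these and multiplying by $2$ yields $E(2,d)\le \frac{\sqrt{d-1}}{\pi}\big(1+\ln(2\sqrt{d-1})\big)\lesssim \sqrt{d-1}\ln(d-1)$, as claimed.

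Finally, for \eqref{eq: asymptotic of E}, taking logarithms in \eqref{eq: improve upper bound} gives
\begin{equation*}
\tfrac12\ln(d-1)+O(1)\le \ln E(2,d)\le \tfrac12\ln(d-1)+\ln\ln(d-1)+O(1),
\end{equation*}
and dividing by $\ln(d-1)$ and letting $d\to\infty$ forces $\lim_{d\to\infty}\frac{\ln E(2,d)}{\ln(d-1)}=\frac12$, since $\frac{\ln\ln(d-1)}{\ln(d-1)}\to 0$. I expect the main technical point to be the clean handling of the crossover at $t^*$ in the upper-bound integral — choosing the split point so that the logarithmic factor emerges with the right constant — while everything else is a routine consequence of Proposition \ref{pro: properties of f} and Theorem \ref{theo: concentration}; care is also needed near $d=2$ where $2d-3$ and $\ln(d-1)$ degenerate, but this only affects finitely many terms and is irrelevant to the $\lesssim$ and limit statements.
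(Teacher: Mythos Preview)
Your proposal is correct and follows essentially the same route as the paper: the lower bound via monotonicity and $f_d(1)$, the upper bound by splitting $\int_0^1 f_d(t)\,dt$ at $t^*=\frac{1}{2\sqrt{d-1}}$ and applying the two branches of \eqref{eq: estimate for f}, and the limit \eqref{eq: asymptotic of E} by squeezing after taking logarithms. The only cosmetic difference is that the paper introduces the split point as a free parameter $\alpha$ and then optimizes, arriving at the same $\alpha=\frac{1}{2\sqrt{d-1}}$ and the identical bound $E(2,d)\le \frac{\sqrt{d-1}}{\pi}\big(1+\ln 2+\tfrac12\ln(d-1)\big)$.
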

We first provide a proof of this theorem. Then we discuss its implications for the expected number of stable equilibrium points in a random game and that of real zeros of a random Bernstein polynomial in Corollaries \ref{cor: expected stable equi} and \ref{cor: expected zeros Bernstein}. A comparision with well-known results in the theory of random polynomials is given in Remark \ref{re: elliptic polynomial}.
\begin{proof}
The lower bound was derived previously in \cite[Theorem 2]{DH15}. For the sake of completeness, we provide it again here. Using the sixth and the fourth properties in Proposition \ref{pro: properties of f}, we have $E(2,d)=2\int_0^1 f_{d}(t)\,dt\geq 2 \int_0^1 f(1)\,dt=2 f(1)=\frac{d-1}{\pi\sqrt{2d-3}}$. Therefore, $\sqrt{d-1}\lesssim E(2,d)$.

The underlying idea of the proof for the upper bound is to split  the integral range  in the formula of $E(2,d)$ into two intervals. The first one is from $0$  to $\alpha$, for some  $\alpha \in (0,1)$; we then  estimate $f_{d}(t)$ in this interval by $f_{d}(0)$. The second  one is from $\alpha$ to $1$, which is estimated  using the upper bound of $f_{d}(t)$ given in \eqref{eq: estimate for f}. The value of $\alpha$ will then be optimized.
\begin{align}
E(2,d)&=2\int_0^1 f_{d}(t)\,dt=2\left[\int_0^\alpha f_{d}(t)\,dt+\int_\alpha^1 f_{d}(t)\,dt\right]\nonumber
\\&\leq 2\left[\alpha f_{d}(0)+\frac{\sqrt{d-1}}{2\pi}\int_\alpha^1\frac{1}{t} \,dt\right]\nonumber
\\&=\frac{1}{\pi}\left[2(d-1)\,\alpha-\sqrt{d-1}\,\ln \alpha\right].\label{E2: 1}
\end{align}
Let $h(\alpha)$ be the expression inside the square brackets in the right-hand side of \eqref{E2: 1}. To obtain the optimal (i.e. smallest) upper bound, we minimize $h(\alpha)$ with respect to $\alpha$. The optimal value of $\alpha$ satisfies the following equation
\begin{equation*}
\frac{d}{d\alpha}h(\alpha)=2(d-1)-\frac{\sqrt{d-1}}{\alpha}=0,
\end{equation*}
which leads to $\alpha=\frac{1}{2\sqrt{d-1}}$. Substituting this value into \eqref{E2: 1}, we obtain
\begin{equation*}
E(2,d)\leq \frac{\sqrt{d-1}}{\pi}\left(1+\ln 2+\frac{1}{2}\ln (d-1)\right).
\end{equation*}
It follows that $E(2,d)\lesssim \sqrt{d-1}\ln(d-1)$, which is \eqref{eq: improve upper bound}.

We now prove \eqref{eq: asymptotic of E}. By taking logarithm in  \eqref{eq: improve upper bound}, we obtain
\begin{equation*}
\frac{1}{2}\ln(d-1)\lesssim \ln E{(2,d)}\lesssim \frac{1}{2}\ln(d-1)+\ln\ln(d-1).
\end{equation*}
Therefore
\begin{equation}
\frac{1}{2}\lesssim \frac{\ln E(2,d)}{\ln(d-1)}\lesssim \frac{1}{2}+\frac{\ln\ln(d-1)}{\ln(d-1)}.
\end{equation}
Since $\lim\limits_{d\rightarrow\infty}\frac{\ln\ln(d-1)}{\ln(d-1)}=0$, we achieve \eqref{eq: asymptotic of E}. 
\end{proof}

Theorem \ref{theo: behavior of E2} has two interesting implications about the expected number of stable equilibrium points in a random game and that of real zeros of a random Bernstein polynomial.
\begin{corollary}
\label{cor: expected stable equi}
The expected number of stable equilibrium points in a random game with $d$ players and two strategies, $\mathit{SE}(2,d)$, satisfies 
\begin{equation}
\label{eq: improve upper bound SE}
\frac{\sqrt{d-1}}{2}\lesssim \mathit{SE}(2,d)\lesssim \frac{1}{2}\sqrt{d-1}\ln(d-1), 
\end{equation}
and furthermore, satisfies  the following limiting  behaviour  
\begin{equation}
\label{eq: asymptotic of SE}
\lim\limits_{d\rightarrow\infty}\frac{\ln \mathit{SE}(2,d)}{\ln(d-1)}=\frac{1}{2}.
\end{equation}
\end{corollary}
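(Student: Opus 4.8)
The plan is to reduce the statement about stable equilibria to Theorem~\ref{theo: behavior of E2} by way of the exact relation between $\mathit{SE}(2,d)$ and $E(2,d)$. First I would recall why, in a $d$-player two-strategy game, stable and unstable internal equilibria strictly alternate along $(0,1)$: the one-dimensional replicator dynamics reads $\dot y = y(1-y)P(y)$, where $P$ is the random Bernstein polynomial on the left-hand side of \eqref{eq: eqn for fitness d2}; since $y(1-y)>0$ on $(0,1)$, an internal equilibrium $y^\ast$ is asymptotically stable precisely when $P'(y^\ast)<0$, and because $P$ changes sign at each of its (almost surely simple) roots in $(0,1)$, the signs of $P'$ at consecutive roots alternate. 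Hence among $k$ internal equilibria either $\lceil k/2\rceil$ or $\lfloor k/2\rfloor$ are stable, depending only on the sign of $P$ near the endpoints.

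Next I would invoke the symmetry of the coefficient distribution: the $\beta_k$ have a symmetric, mean-zero law, so $P$ and $-P$ are equal in distribution; this makes the two possible endpoint sign-patterns equally likely and yields the exact identity $\mathit{SE}(2,d)=\tfrac12 E(2,d)$ — this is precisely the relation established in \cite{DH15}, which I would simply cite. From here the conclusion is purely arithmetical: dividing the chain of inequalities \eqref{eq: improve upper bound} by $2$ gives \eqref{eq: improve upper bound SE}, and since $\ln \mathit{SE}(2,d)=\ln E(2,d)-\ln 2$, dividing by $\ln(d-1)$ and passing to the limit in \eqref{eq: asymptotic of E} gives \eqref{eq: asymptotic of SE}.

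The only point requiring care — and the step I would be most cautious about — is the exact factor $\tfrac12$ in $\mathit{SE}(2,d)=\tfrac12 E(2,d)$: one must verify that the two endpoint events (that the smallest, respectively largest, internal equilibrium is stable) contribute equally in expectation, so that they cancel exactly rather than up to a lower-order correction. It is worth noting, however, that the corollary's conclusions \eqref{eq: improve upper bound SE}--\eqref{eq: asymptotic of SE} are insensitive to bounded multiplicative errors, so even a crude two-sided bound $c_1 E(2,d)\le \mathit{SE}(2,d)\le c_2 E(2,d)$ with positive constants $c_1,c_2$ would already suffice; the clean identity from \cite{DH15} simply makes the argument shortest.
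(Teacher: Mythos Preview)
Your proposal is correct and follows essentially the same route as the paper: reduce to Theorem~\ref{theo: behavior of E2} via the identity $\mathit{SE}(2,d)=\tfrac12 E(2,d)$, then divide the bounds by $2$ and take the limit. One minor correction: the paper attributes the exact factor $\tfrac12$ to \cite[Theorem~3]{HTG12} (each internal equilibrium is stable with probability $1/2$), not to \cite{DH15}; your alternation-plus-symmetry sketch is in fact a reconstruction of that argument rather than something proved in \cite{DH15}.
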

\begin{proof} 
From \cite[Theorem 3]{HTG12}, it is known that  an equilibrium in a random game with two strategies and an arbitrary number of players, is stable with probability $1/2$. Thus, $\mathit{SE}(2,d) = \frac{E(2,d)}{2}$. Hence, the corollary is clearly followed from  Theorem  \ref{theo: behavior of E2}. 
\end{proof}
\begin{corollary}
\label{cor: expected zeros Bernstein}
The expected number of  real zeros, $E_{\B}$, of a random Bernstein polynomial
\begin{equation*}
\B(x)=\sum\limits_{k=0}^{d-1}\beta_k\begin{pmatrix}
d-1\\
k
\end{pmatrix}x^k\,(1-x)^{d-1-k},
\end{equation*} 
where $\beta_k$ are independent standard normal distributions, satisfies
\begin{equation}
\label{eq: EB estimate}
\sqrt{d-1}\lesssim E_{\B}\lesssim \sqrt{d-1}\ln(d-1),\quad \lim_{d\rightarrow\infty}\frac{\ln E_{\B}}{\ln (d-1)}=\frac{1}{2}.
\end{equation}
\end{corollary}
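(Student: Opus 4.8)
The plan is to reduce the count of real zeros of $\B$ to the quantity $E(2,d)$ estimated in Theorem~\ref{theo: behavior of E2}, using the substitution $t=\frac{x}{1-x}$ that turns \eqref{eq: eqn for fitness d2} into \eqref{eq: eqn for y}. Write $Q(t):=\sum_{k=0}^{d-1}\beta_k\binom{d-1}{k}t^k$ for the random algebraic polynomial in \eqref{eq: eqn for y}. For every $x\neq 1$ we have the identity $\B(x)=(1-x)^{d-1}\,Q\!\left(\frac{x}{1-x}\right)$. I would first note that $x\mapsto\frac{x}{1-x}$ is a bijection of $\mathbb{R}\setminus\{1\}$ onto $\mathbb{R}\setminus\{-1\}$, that $\B(1)=\beta_{d-1}\neq 0$ almost surely, and that $Q(-1)=\sum_{k=0}^{d-1}(-1)^k\beta_k\binom{d-1}{k}\neq 0$ almost surely. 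Hence, almost surely, the real zeros of $\B$ correspond one-to-one with the real zeros of $Q$, so $E_{\B}=E_Q$, the expected number of real zeros of $Q$.

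The second step is a sign-flip symmetry that halves the problem. The coefficients of $Q(-t)$ are $(-1)^k\beta_k\binom{d-1}{k}$, $k=0,\ldots,d-1$, which have the same joint law as the coefficients $\beta_k\binom{d-1}{k}$ of $Q$, because the $\beta_k$ are i.i.d.\ standard normal, hence symmetric. Therefore the expected density of real zeros of $Q$ is even in $t$, and its restriction to $(0,+\infty)$ is exactly $f_d$ by Lemma~\ref{lemma: E(n,d)} (which is how $f_d$ is defined). Consequently $E_Q=2\int_0^{+\infty}f_d(t)\,dt=2E(2,d)$, the last equality being the sixth property in Proposition~\ref{pro: properties of f}.

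Putting the two steps together gives $E_{\B}=2\,E(2,d)$, and \eqref{eq: EB estimate} follows at once from Theorem~\ref{theo: behavior of E2}: the factor $2$ is absorbed by the constants hidden in $\lesssim$, and it does not change the limit because $\ln E_{\B}=\ln 2+\ln E(2,d)$ while $\frac{\ln 2}{\ln(d-1)}\to 0$.

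I do not expect a real obstacle here; the only care needed is bookkeeping. One must check that no real zero is lost when passing between $\B$ and $Q$ — this is precisely why one verifies that the exceptional points $x=1$ and $t=-1$ of the Möbius change of variables carry no zero almost surely — and one should observe that almost surely all the zeros in question are simple, so that counting with or without multiplicity coincides. A coarser alternative (enough for the bounds but less clean) would prove the lower bound directly from the zeros of $\B$ in $(0,1)$, which contribute exactly $E(2,d)$, and bound the zeros in $\mathbb{R}\setminus[0,1]$ by a separate Kac--Rice estimate; the symmetry argument above makes that detour unnecessary.
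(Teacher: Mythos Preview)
Your proposal is correct and follows essentially the same route as the paper: reduce to $E_{\B}=2E(2,d)$ via the substitution $t=x/(1-x)$ and then invoke Theorem~\ref{theo: behavior of E2}. The only cosmetic difference is that the paper justifies the evenness of $f_d$ by citing the explicit representation \eqref{eq: fourth property}, whereas you use the distributional sign-flip symmetry of the coefficients; both arguments are valid, and your extra bookkeeping at the exceptional points $x=1$, $t=-1$ is a welcome clarification that the paper glosses over.
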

\begin{proof}
As mentioned beneath \eqref{eq: eqn for fitness d2}, solving $\B(x)=0$ is equivalent to solving Eq. \eqref{eq: eqn for y}. It follows that $f_d(t)$ is the expected density of real zeros of $\B(x)$. Therefore, $E_{\B}$ given by
\begin{equation}
\label{eq: EB}
E_{\B}=\int_{-\infty}^\infty f_d(t)\,dt=2\int_0^\infty f_d(t)\, dt=2 E(2,d).
\end{equation}
Note that the second equality in \eqref{eq: EB} holds because $f_d(t)$ is even in $t$ due to \eqref{eq: fourth property}. The asymptotic behaviour \eqref{eq: EB estimate} of $E_\B$ is then followed directly from Theorem \ref{theo: behavior of E2}.
\end{proof}
\begin{remark}
\label{re: elliptic polynomial}
The study of the distribution and expected number of real zeros of a random polynomial is an active research field with a long history dating back to 1932 with Block and P\'{o}lya \cite{BP32}, see for instance~\cite{EK95} for a nice exposition and~\cite{TaoVu14,NNV15TMP} for the most recent results and discussions. Consider a random polynomial of the type
\begin{equation}
\P_d(z)=\sum_{i=0}^{d-1} c_i\,\xi_i\,z^i.
\end{equation}
The most well-known classes of polynomials studied extensively in the literature are: flat polynomials or Weyl polynomials for $c_i\colonequals\frac{1}{i!}$, elliptic polynomials (EP) or binomial polynomials for $c_i\colonequals\sqrt{\begin{pmatrix}
d-1\\
i
\end{pmatrix}}$ and Kac polynomials for $c_i\colonequals 1$. We emphasize the difference between the polynomial studied in this paper, i.e. the right-hand side of Eq. \eqref{eq: eqn for y}, with the elliptic polynomial: in our case $c_i=\begin{pmatrix}
d-1\\
i
\end{pmatrix}$ are binomial coefficients, not their square root as in the elliptic polynomial. In the former case, $v(x)^TCv(y)=\sum_{i=1}^{d-1}\begin{pmatrix}
d-1\\
i
\end{pmatrix}x^iy^i=(1+xy)^{d-1}$, and as a result the density function and the expected number of real zeros have closed formula, see \cite{EK95}
\begin{equation*}
f_{EP}(t)=\frac{\sqrt{d-1}}{\pi (1+t^2)}, \quad E_{EP}=\sqrt{d-1}.
 \end{equation*} 
Our case is more challenging, because of the square in the coefficients,\\
$v(x)^TCv(y)=\sum_{i=1}^{d-1}\begin{pmatrix}
d-1\\
i
\end{pmatrix}^2x^iy^i$ is no longer a generating function. Nevertheless, Theorem \ref{theo: behavior of E2} shows that $E(2,d)$ still has interesting asymptotic behaviour as in \eqref{eq: improve upper bound} and \eqref{eq: asymptotic of E}.
\end{remark}
\subsection{Connections to Legendre polynomials and other qualitative properties}
\label{subsec: connection}
In this section, we first establish a connection between the expected density function $f_d$ and the well-known Legendre polynomials. Then using the connection and  known properties of Legendre polynomials, we prove some qualitative properties of $f_d$ and the expected number of equilibria. The main results of this section can be summarised as follows.
\begin{enumerate}[(i)]
\item  Theorem \ref{theo: fd interm of Pd} and Theorem \ref{theo: fd interms of Pd and Pd-1} derive an expression for  the expected density $f_d$ in terms of the Legendre polynomials.
\item  Theorem \ref{theo: fd/d decreases} shows that $\frac{f_d(t)}{d-1}$ is an increasing function of $d$ for any given $t>0$.
\item Corollary \ref{cor: monotonicity of E} proves that $\frac{E(2,d)}{d-1}$ and $\frac{SE(2,d)}{d-1}$ are decreasing functions of $d$.
\end{enumerate}
Technically,  keys to these theorems are Lemma \ref{lem: connection btw Md and Pd} that connects the Legendre polynomials $P_d$ to $M_{d+1}$ in \eqref{eq: Md}, and Lemma \ref{lem: Turan inequality}  showing a reverse Turan's inequality. These lemmas are of interest in their own right.

\subsubsection{Legendre polynomials}
We recall some relevant details  on Legendre polynomials. Legendre polynomials, denoted by $P_d(x)$, are solutions to Legendre's differential equation
\begin{equation}
\label{eq: Legendre equation}
\frac{d}{dx}\left[(1-x^2)\frac{d}{dx}P_d(x)\right]+d(d+1)P_d(x)=0,
\end{equation}
with initial data $P_0(x)=1, \ \ P_1(x)=x$. The following are some important properties of the Legendre polynomials that will be used in the sequel.
\begin{enumerate}[(1)]
\item Explicit representation 
\begin{equation}
\label{eq: explicit reprentation of Legendre}
P_d(x)=\frac{1}{2^d}\sum_{i=0}^{d}\begin{pmatrix}
d\\
i
\end{pmatrix}^2(x-1)^{d-i}(x+1)^i.
\end{equation}
\item Recursive relation
\begin{equation}
\label{eq: recursive of legendre}
(d+1)P_{d+1}(x)=(2d+1)xP_d(x)-dP_{d-1}(x).
\end{equation}
\item First derivative relation
\begin{equation}
\label{eq: first derivative  Legendre}
\frac{x^2-1}{d}P_d'(x)=xP_d(x)-P_{d-1}(x).
\end{equation}
\end{enumerate}

\begin{example} The first few Legendre polynomials are
\begin{align*}
&P_0(x)=1,\quad P_1(x)=x,\quad P_2(x)=\frac{1}{2}(3x^2-1),\quad P_3(x)=\frac{1}{2}(5x^3-3x),
\\&\quad P_4(x)=\frac{1}{8}(35x^4-30x^2+3).
\end{align*}
\end{example}

The Legendre polynomials were first introduced in 1782 by A. M. Legendre as the coefficients in the expansion of the Newtonian potential \cite{Legendre1785}. This expansion gives the gravitational potential associated to a point mass or the Coulomb potential associated to a point charge. In the course of time, Legendre polynomials have been widely used in Physics and Engineering. For instance, they occur when one solves Laplace's equation (and related partial differential equations) in spherical coordinates \cite{Bayin06}.

Our approach reveals an appealing, and previously unexplored relationship between Legendre polynomials and evolutionary game theory. We explore this relationship and its applications in the next sections.

\subsubsection{From Legendre polynomials to evolutionary games}
The starting point of our analysis is the following relation between the polynomial $M_{d+1}(t)$ in \eqref{eq: Md} and the Legendre polynomial $P_d$. 
\begin{lemma}
\label{lem: connection btw Md and Pd}
It holds that
\begin{equation}
\label{eq: connection btw Md and Pd}
M_{d+1}(t)=(1-t^2)^d\,P_d\left(\frac{1+t^2}{1-t^2}\right).
\end{equation}
\end{lemma}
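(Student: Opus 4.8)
The plan is to prove \eqref{eq: connection btw Md and Pd} by direct substitution of the argument $x=\frac{1+t^2}{1-t^2}$ into the explicit representation \eqref{eq: explicit reprentation of Legendre} of $P_d$, followed by an elementary simplification and a reindexing. First I would record the two elementary identities
\[
x-1=\frac{(1+t^2)-(1-t^2)}{1-t^2}=\frac{2t^2}{1-t^2},\qquad x+1=\frac{(1+t^2)+(1-t^2)}{1-t^2}=\frac{2}{1-t^2},
\]
so that for each $i$ the mixed power factor collapses to $(x-1)^{d-i}(x+1)^i=\dfrac{(2t^2)^{d-i}\,2^{i}}{(1-t^2)^d}=\dfrac{2^d\,t^{2(d-i)}}{(1-t^2)^d}$, independently of the split between the two exponents.

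Plugging this into \eqref{eq: explicit reprentation of Legendre}, the prefactor $2^{-d}$ cancels the $2^d$, and the common denominator $(1-t^2)^d$ is exactly the factor multiplying $P_d$ in \eqref{eq: connection btw Md and Pd}. Hence $(1-t^2)^d P_d\!\left(\frac{1+t^2}{1-t^2}\right)=\sum_{i=0}^{d}\binom{d}{i}^2 t^{2(d-i)}$. Reindexing with $k=d-i$ and using the symmetry $\binom{d}{d-k}=\binom{d}{k}$ turns this sum into $\sum_{k=0}^{d}\binom{d}{k}^2 t^{2k}$, which is precisely $M_{d+1}(t)$ by the definition in \eqref{eq: Md} (with $d$ replaced by $d+1$). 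This establishes the identity as rational functions of $t$ on $\{t:\ t\neq\pm1\}$.

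To conclude, I would note that both sides of \eqref{eq: connection btw Md and Pd} are in fact polynomials in $t$: the left-hand side manifestly, and the right-hand side because the computation above rewrites $(1-t^2)^d P_d\!\left(\frac{1+t^2}{1-t^2}\right)$ as the polynomial $\sum_{i=0}^{d}\binom{d}{i}^2 t^{2(d-i)}$ with the singular denominator cancelled. Two polynomials agreeing on the cofinite set $\{t\neq\pm1\}$ agree identically, so the identity holds for all $t$.

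I do not expect any genuine obstacle here; the only thing to watch is the bookkeeping of the exponents $d-i$ versus $i$ under the reindexing, so that the result lands on $M_{d+1}$ rather than on $M_d$, and a brief remark handling the formally singular points $t=\pm1$. (Alternatively, one could derive \eqref{eq: connection btw Md and Pd} from the generating-function identity $\sum_{d\ge0}P_d(x)z^d=(1-2xz+z^2)^{-1/2}$ or from Rodrigues' formula, but the direct substitution above is the shortest route.)
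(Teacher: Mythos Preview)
Your proof is correct and follows essentially the same approach as the paper: direct substitution of $x=\frac{1+t^2}{1-t^2}$ into the explicit representation \eqref{eq: explicit reprentation of Legendre}, simplification of $(x-1)^{d-i}(x+1)^i$, cancellation of $2^d$, and reindexing via binomial symmetry. The paper does this with an intermediate variable $q$ and then sets $q=t^2$, whereas you work with $t^2$ throughout and add a brief remark about the removable singularity at $t=\pm1$; neither difference is substantive.
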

\begin{proof}
See Appendix \ref{app: lem: connection btw Md and Pd}.
\end{proof}

Note that traditionally in the Legendre polynomials, the arguments are in $[-1,1]$. In this paper, however, the arguments are not in this interval since $\big|\frac{1+t^2}{1-t^2}\big|\geq 1$. Legendre polynomials with arguments greater than unity have been used in the literature, for instance in \cite[Chapter 2]{Cur03}.
We now establish a connection between the density $f_d(t)$ and the Legendre polynomials. According to the second property in Proposition \ref{pro: properties of f}, we have
\begin{equation}
\label{eq: fd interms of Md}
f_{d+1}(t)=\frac{1}{2\pi}\left[\frac{1}{t}\left(t\frac{M_{d+1}'(t)}{M_{d+1}(t)}\right)'\right]^\frac{1}{2},
%=\frac{1}{2\pi}\left[\frac{1}{t}\frac{M'_{d+1}(t)}{M_{d+1}(t)}+\frac{M''_{d+1}(t)M_{d+1}(t)-(M'_{d+1}(t))^2}{M^2_{d+1}(t)}\right]^\frac{1}{2},
\end{equation}
where $'$ denotes the derivative with respect to $t$. Using this formula and Lemma \ref{lem: connection btw Md and Pd}, we obtain the following expression of $f_{d+1}(t)$ in terms of $P_d(t)$ and its derivative. 
 
\begin{theorem}[Expression of the density in terms of the Legrendre polynomial and its derivative]
\label{theo: fd interm of Pd}
The following formula holds
\begin{equation}
\label{eq: fd interms of Pd}
(2\pi\,f_{d+1}(t))^2=\frac{4d^2}{(1-t^2)^2}-\frac{16t^2}{(1-t^2)^4}\left(\frac{P'_d}{P_d}\right)^2\left(\frac{1+t^2}{1-t^2}\right)
\end{equation}
\end{theorem}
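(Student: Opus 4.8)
The plan is to work directly from the logarithmic-derivative representation \eqref{eq: fd interms of Md} and to plug in the factorisation of $M_{d+1}$ supplied by Lemma \ref{lem: connection btw Md and Pd}. Squaring \eqref{eq: fd interms of Md} gives
\[
(2\pi f_{d+1}(t))^2=\frac{1}{t}\Bigl(t\,\phi'(t)\Bigr)'=\phi''(t)+\frac{\phi'(t)}{t},\qquad \phi:=\log M_{d+1},
\]
so everything reduces to evaluating the linear operator $\phi\mapsto \phi''+\phi'/t$. By Lemma \ref{lem: connection btw Md and Pd} one has $\phi(t)=d\log(1-t^2)+b(t)$ with $b(t):=\log P_d(s(t))$ and $s(t):=\frac{1+t^2}{1-t^2}$, and by linearity I would handle the two summands separately. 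The elementary term $d\log(1-t^2)$ is a one-line computation and contributes exactly $-\dfrac{4d}{(1-t^2)^2}$.

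For the Legendre term, set $w(s):=P_d'(s)/P_d(s)$ and record the auxiliary identities
\[
s'(t)=\frac{4t}{(1-t^2)^2},\qquad s''(t)=\frac{4(1+3t^2)}{(1-t^2)^3},\qquad 1-s(t)^2=-\frac{4t^2}{(1-t^2)^2},
\]
the last of which yields $s'(t)^2/(1-s(t)^2)=-4/(1-t^2)^2$. Then $b'=w(s)\,s'$ and $b''=w'(s)\,s'^2+w(s)\,s''$, where $w'(s)=P_d''(s)/P_d(s)-w(s)^2$. The decisive step is to eliminate the second derivative of $P_d$ using Legendre's differential equation \eqref{eq: Legendre equation}, written as $(1-s^2)P_d''-2sP_d'+d(d+1)P_d=0$, so that $P_d''(s)/P_d(s)=\bigl(2s\,w(s)-d(d+1)\bigr)/(1-s^2)$. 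Substituting this and the value of $s'^2/(1-s^2)$ turns $w'(s)\,s'^2$ into a constant term $\dfrac{4d(d+1)}{(1-t^2)^2}$, the quadratic term $-\dfrac{16t^2}{(1-t^2)^4}\,w(s)^2$, and one further term linear in $w(s)$.

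Finally I would collect all terms linear in $w(s)$ — namely the contribution of $\phi'/t$ coming from $b'/t$, the term $w(s)\,s''$, and the linear remainder from $w'(s)\,s'^2$ — over the common denominator $(1-t^2)^3$; their numerators add to $w(s)\bigl[4(1-t^2)-8(1+t^2)+4(1+3t^2)\bigr]=0$, so they cancel identically. What survives is
\[
-\frac{4d}{(1-t^2)^2}+\frac{4d(d+1)}{(1-t^2)^2}-\frac{16t^2}{(1-t^2)^4}\,w(s(t))^2=\frac{4d^2}{(1-t^2)^2}-\frac{16t^2}{(1-t^2)^4}\Bigl(\frac{P_d'}{P_d}\Bigr)^2\!\Bigl(\frac{1+t^2}{1-t^2}\Bigr),
\]
which is precisely \eqref{eq: fd interms of Pd}. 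The only genuinely delicate point is organising the differentiation so that the three $w(s)$-linear contributions all appear with denominator $(1-t^2)^3$ and visibly cancel; the rest is routine bookkeeping, and the full computation may be relegated to an appendix.
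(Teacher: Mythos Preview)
Your proposal is correct and follows essentially the same route as the paper: both start from the representation \eqref{eq: fd interms of Md} and Lemma~\ref{lem: connection btw Md and Pd}, differentiate twice, and then eliminate $P_d''$ via Legendre's differential equation \eqref{eq: Legendre equation} to reach the final formula. Your organisation---writing $(2\pi f_{d+1})^2=\phi''+\phi'/t$ with $\phi=\log M_{d+1}$, splitting $\phi$ linearly, and tracking the three $w(s)$-linear contributions to an explicit cancellation---is a bit tidier than the paper's direct computation, but the argument is the same in substance.
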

\begin{proof}
See Appendix \ref{app: proof of Theorem fd interm of Pd}.
\end{proof}

\begin{corollary}
As a direct consequence of \eqref{eq: fd interms of Pd}, we obtain the following bound for $f_{2,d}(t)$.
\begin{equation}
\label{eq: improved upper bound}
f_d(t)\leq \frac{1}{\pi}\frac{d-1}{1-t^2}.
\end{equation}
In comparison with the estimate \eqref{eq: estimate for f} obtained in Theorem \ref{theo: concentration}, this inequality is weaker for $t>0$ since it is of order $O(d-1)$. However, it does not blow up as $t$ approaches  $0$.
\end{corollary}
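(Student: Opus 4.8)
The plan is to read the claimed bound directly off the identity supplied by Theorem~\ref{theo: fd interm of Pd}, with essentially no extra work. First I would reindex that identity by the substitution $d\mapsto d-1$, which turns \eqref{eq: fd interms of Pd} into
\begin{equation*}
(2\pi\,f_{d}(t))^2=\frac{4(d-1)^2}{(1-t^2)^2}-\frac{16t^2}{(1-t^2)^4}\left(\frac{P'_{d-1}}{P_{d-1}}\right)^2\!\left(\frac{1+t^2}{1-t^2}\right).
\end{equation*}
Here the ratio $P'_{d-1}/P_{d-1}$ evaluated at $\frac{1+t^2}{1-t^2}$ is well defined for every $t\ge 0$ with $t\neq 1$, since $\big|\frac{1+t^2}{1-t^2}\big|\ge 1$ and the zeros of $P_{d-1}$ all lie in the open interval $(-1,1)$, so $P_{d-1}$ does not vanish at that argument.

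Next I would simply observe that the second term on the right-hand side is a product of manifestly non-negative factors: $t^2\ge 0$, $(1-t^2)^{-4}>0$ for $t\neq\pm 1$, and the square $\big(P'_{d-1}/P_{d-1}\big)^2(\,\cdot\,)\ge 0$. Dropping it can therefore only increase (or leave unchanged) the right-hand side, so
\begin{equation*}
(2\pi\,f_{d}(t))^2\le \frac{4(d-1)^2}{(1-t^2)^2}.
\end{equation*}
Taking square roots and restricting to $t\in[0,1)$, where $1-t^2>0$, yields $f_d(t)\le \frac{1}{\pi}\,\frac{d-1}{1-t^2}$, which is the assertion. (For $t>1$ the claimed right-hand side is negative and the bound is vacuous; if one wanted a statement there one could instead combine the $[0,1)$ bound with the symmetry $f_d(1/t)=t^2 f_d(t)$ from Proposition~\ref{pro: properties of f}, but this is not needed.) Finally I would insert the remark comparing with \eqref{eq: estimate for f}: at $t=0$ the new bound gives $f_d(0)\le\frac{d-1}{\pi}$, which is sharp by the first property in Proposition~\ref{pro: properties of f}, whereas $\frac{\sqrt{d-1}}{2\pi t}$ diverges; away from $t=0$, however, the bound of Theorem~\ref{theo: concentration} is of order $\sqrt{d-1}$ and hence much stronger than the present $O(d-1)$ estimate.

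There is no real obstacle to overcome here: the only point deserving a sentence of care is the behaviour at $t=\pm 1$, where both terms of the reindexed identity blow up individually while their difference stays finite, so one must simply note that the stated inequality is only meaningful (and is being asserted) on $[0,1)$ and by evenness on $(-\infty,-1)\cup(1,\infty)$ via the substitution $t\mapsto 1/t$.
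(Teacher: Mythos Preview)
Your proposal is correct and is precisely the argument the paper has in mind: the corollary is stated as ``a direct consequence'' of \eqref{eq: fd interms of Pd} with no further proof, and the only thing to do is drop the manifestly non-negative second term and take square roots, exactly as you do after reindexing. Your additional remarks on the well-definedness of $P'_{d-1}/P_{d-1}$ at $\frac{1+t^2}{1-t^2}$ and on the singular point $t=1$ are careful touches that go slightly beyond what the paper spells out.
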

%\begin{lemma}
%The following recursive formula holds
%\begin{equation}
%\label{eq: recursive formula for Md}
%M_{d+1}(t)=\frac{2d-1}{d}(1+t^2)M_d(t)-\frac{d-1}{d}(1+t^2)^2M_{d-1}(t).
%\end{equation}
%\end{lemma}
%Let $P_d(x)$ denotes the Legendre polynomials.
%\red{Recall what it is and important properties here}.
%We recall that
%\begin{equation}
%M_{d+1}(t)=\sum_{k=0}^d\begin{pmatrix}
%d\\
%k
%\end{pmatrix}^2t^{2k}.
%\end{equation}
%Then we have the following relationship between $M_{d+1}$ and $P_d$ \red{I have found this in the literature (see the pdf file Squared binomial coefficient). We just need to reference or prove ourselves if it is short}
%\begin{equation*}
%M_{d+1}(t)=(1-t^2)^d\,P_d\left(\frac{1+t^2}{1-t^2}\right).
%\end{equation*}
%By the recursive formula for $P_d$,
%\begin{equation}
%\label{eq: recursive formula for Pd}
%dP_d(x)=(2d-1)xP_{d-1}(x)-(d-1)P_{d-2}(x).
%\end{equation}
%we obtain (with $x:=\frac{1+t^2}{1-t^2}$)
%\begin{align*}
%M_{d+1}(t)&=(1-t^2)^dP_{d}\left(\frac{1+t^2}{1-t^2}\right)
%\\&=(1-t^2)^d\left[\frac{2d-1}{d}\frac{1+t^2}{1-t^2}\frac{M_{d}(t)}{(1-t^2)^{d-1}}-\frac{d-1}{d}\frac{M_{d-1}}{(1-t^2)^{d-2}}\right]
%\\&=\frac{2d-1}{d}(1+t^2)M_d(t)-\frac{d-1}{d}(1+t^2)^2M_{d-1}(t).
%\end{align*}
%\begin{align*}
%&M_{d+1}'(t)=(1-t^2)^{d-2}\left[4tP_d'\left(\frac{1+t^2}{1-t^2}\right)-2d(1-t^2)P_d\left(\frac{1+t^2}{1-t^2}\right)\right].
%\\&M_{d+1}''(t)=
%\end{align*}
%\begin{lemma}
%The following recursive formula holds
%\begin{equation}
%\label{eq: recursive formula for fd}
%f_{d+1}(t)=...
%\end{equation}
%\end{lemma}
We provide another expression of $f_{d+1}(t)$ in terms of two consecutive Legendre polynomials $P_{d-1}$ and $P_{d}$. In comparison with \eqref{eq: fd interms of Pd}, this formula avoids the computations of the derivative of the Legendre polynomial $P_d$. 
\begin{theorem}[Expression of the density function in terms of two Legendre polynomials]
\label{theo: fd interms of Pd and Pd-1}
It holds that
\begin{equation}
\label{eq: main fd interms of Pd}
(2\pi f_{d+1}(t))^2=\frac{4d^2}{(1-t^2)^2}-\frac{d^2}{t^2}\left[\frac{1+t^2}{1-t^2}-\frac{P_{d-1}}{P_d}\left(\frac{1+t^2}{1-t^2}\right)\right]^2.
\end{equation}
\end{theorem}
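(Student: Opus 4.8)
The plan is to deduce this from the expression already established in Theorem~\ref{theo: fd interm of Pd}, namely
\begin{equation*}
(2\pi\,f_{d+1}(t))^2=\frac{4d^2}{(1-t^2)^2}-\frac{16t^2}{(1-t^2)^4}\left(\frac{P'_d}{P_d}\right)^2\!\left(\frac{1+t^2}{1-t^2}\right),
\end{equation*}
by rewriting the logarithmic derivative $P'_d/P_d$ in terms of the ratio $P_{d-1}/P_d$ of two consecutive Legendre polynomials. The only input needed is the first derivative relation \eqref{eq: first derivative  Legendre}.

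First I would set $x=\frac{1+t^2}{1-t^2}$; note $x\geq 1$ for all $t\geq 0$, so $P_d(x)>0$ (all zeros of $P_d$ lie in $(-1,1)$) and every division below is legitimate. Dividing \eqref{eq: first derivative  Legendre} by $P_d(x)$ gives
\begin{equation*}
\frac{x^2-1}{d}\,\frac{P'_d(x)}{P_d(x)}=x-\frac{P_{d-1}(x)}{P_d(x)}.
\end{equation*}
Next I would record the elementary identity $x^2-1=\left(\frac{1+t^2}{1-t^2}\right)^2-1=\frac{(1+t^2)^2-(1-t^2)^2}{(1-t^2)^2}=\frac{4t^2}{(1-t^2)^2}$, so that
\begin{equation*}
\frac{P'_d(x)}{P_d(x)}=\frac{d(1-t^2)^2}{4t^2}\left[x-\frac{P_{d-1}(x)}{P_d(x)}\right].
\end{equation*}

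Squaring this and substituting into the formula from Theorem~\ref{theo: fd interm of Pd}, the factor $\frac{16t^2}{(1-t^2)^4}$ multiplies $\left(\frac{P'_d}{P_d}\right)^2=\frac{d^2(1-t^2)^4}{16t^4}\left[x-\frac{P_{d-1}}{P_d}\right]^2$, and everything collapses to $\frac{d^2}{t^2}\left[x-\frac{P_{d-1}(x)}{P_d(x)}\right]^2$, which is exactly the claimed expression. There is no real obstacle here beyond careful bookkeeping with the change of variable $x=\frac{1+t^2}{1-t^2}$ and the cancellation of the powers of $t$ and $1-t^2$; the substance of the result lies in Theorem~\ref{theo: fd interm of Pd} (hence in Lemma~\ref{lem: connection btw Md and Pd}) together with the classical contiguity relation \eqref{eq: first derivative  Legendre}.
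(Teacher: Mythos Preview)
Your proof is correct and follows essentially the same route as the paper: start from the expression in Theorem~\ref{theo: fd interm of Pd}, use the derivative relation \eqref{eq: first derivative  Legendre} to rewrite $P_d'/P_d$ as $\frac{d}{x^2-1}\big(x-\frac{P_{d-1}}{P_d}\big)$ at $x=\frac{1+t^2}{1-t^2}$, compute $x^2-1=\frac{4t^2}{(1-t^2)^2}$, and substitute so that the prefactors cancel to $d^2/t^2$. Your version is slightly more careful in justifying that $P_d(x)\neq 0$ for $x\geq 1$, but otherwise the arguments are identical.
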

\begin{proof}
See Appendix \ref{app: proof of Theorem fd interms of Pd and Pd-1}
\end{proof}
\subsubsection{Monotonicity of the densities}
\label{subsec: monotone}
Theorem \ref{theo: fd interms of Pd and Pd-1} is crucial for  the subsequent qualitative study of the density $f_{2,d}(t)$ for varying $d$. 

%To begin with, we establish  two auxiliary lemmas. %The first one is an inequality related to the well-known Tur\'{a}n inequality for Legendre polynomials~\cite{Turan50}.
\begin{lemma}
\label{lem: Turan inequality}
The following inequality holds for all $|x|\geq 1$ 
\begin{equation}
P_d(x)^2\leq P_{d+1}(x)P_{d-1}(x).
\end{equation}
\end{lemma}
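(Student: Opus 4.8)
The plan is to reduce the statement to the ray $x\ge 1$ and then read the inequality off Laplace's integral representation of the Legendre polynomials via the Cauchy--Schwarz inequality. First I would dispose of the range $x\le -1$ using the parity relation $P_d(-x)=(-1)^dP_d(x)$: both $P_d(x)^2$ and $P_{d+1}(x)P_{d-1}(x)$ are invariant under $x\mapsto -x$, since in the product the sign factor $(-1)^{d+1}(-1)^{d-1}=(-1)^{2d}$ cancels. Hence it suffices to prove $P_d(x)^2\le P_{d+1}(x)P_{d-1}(x)$ for $x\ge 1$ (and for $x=1$ both sides equal $1$).

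For $x\ge 1$ I would invoke Laplace's first integral representation
\[
P_k(x)=\frac{1}{\pi}\int_0^\pi\bigl(x+\sqrt{x^2-1}\,\cos\theta\bigr)^k\,d\theta ,
\]
and set $u(\theta):=x+\sqrt{x^2-1}\,\cos\theta$. Since $0\le\sqrt{x^2-1}<x$ for $x\ge1$, we have $u(\theta)\ge x-\sqrt{x^2-1}>0$ on $[0,\pi]$, so in particular $P_{d-1}(x),P_d(x),P_{d+1}(x)>0$. Splitting $u^d=u^{(d-1)/2}\,u^{(d+1)/2}$ and applying the Cauchy--Schwarz inequality with respect to the probability measure $\pi^{-1}\,d\theta$ on $[0,\pi]$ gives
\begin{align*}
P_d(x)=\frac{1}{\pi}\int_0^\pi u^{(d-1)/2}\,u^{(d+1)/2}\,d\theta
&\le\Bigl(\frac{1}{\pi}\int_0^\pi u^{d-1}\,d\theta\Bigr)^{1/2}\Bigl(\frac{1}{\pi}\int_0^\pi u^{d+1}\,d\theta\Bigr)^{1/2}\\
&=\bigl(P_{d-1}(x)\,P_{d+1}(x)\bigr)^{1/2}.
\end{align*}
Squaring the two nonnegative sides yields $P_d(x)^2\le P_{d+1}(x)\,P_{d-1}(x)$, as claimed.

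There is essentially no hard step here: the whole argument rests on recognising that Laplace's representation — a standard property of $P_d$ — is precisely the device that converts this ``reverse Turán'' inequality into a one-line Cauchy--Schwarz estimate, the reversal of the usual Turán inequality on $[-1,1]$ being explained by the fact that there the analogous integrand base $x+i\sqrt{1-x^2}\,\cos\theta$ is complex-valued. The only points needing care are that Laplace's formula is used on the \emph{closed} ray $x\ge1$ with a strictly positive integrand base, and the elementary sign bookkeeping in the reduction step. As an alternative that stays entirely within the notation of this paper, one can instead use Lemma~\ref{lem: connection btw Md and Pd} to rephrase the claim (for all $|x|\ge1$, with the same sign cancellation) as the log-convexity statement $M_{d+1}(t)^2\le M_d(t)\,M_{d+2}(t)$ for all real $t$, observe that $M_{d+1}(t)=\frac{1}{\pi}\int_0^\pi|1+te^{i\theta}|^{2d}\,d\theta$ (expand $(1+te^{i\theta})^d(1+te^{-i\theta})^d$ and use $\frac{1}{2\pi}\int_0^{2\pi}e^{ik\theta}\,d\theta=\delta_{k0}$), and run the identical Cauchy--Schwarz argument with the nonnegative weight $w(\theta)=|1+te^{i\theta}|^2=1+2t\cos\theta+t^2$.
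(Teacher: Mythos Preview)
Your proof is correct and takes a genuinely different route from the paper's. The paper simply quotes Constantinescu's identity
\[
P_d(x)^2 - P_{d+1}(x)P_{d-1}(x) \;=\; \frac{1-x^2}{d(d+1)}\Bigl(\sum_{i=1}^d \tfrac{1}{i} + \sum_{i=1}^{d-1}\tfrac{1}{i+1}\sum_{j=1}^i (2j+1) P_j^2(x)\Bigr),
\]
from which the sign is read off the factor $1-x^2$ for $|x|\ge 1$. Your argument instead combines Laplace's integral $P_k(x)=\pi^{-1}\int_0^\pi (x+\sqrt{x^2-1}\cos\theta)^k\,d\theta$ with Cauchy--Schwarz, after a parity reduction to $x\ge 1$. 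This is entirely self-contained (no external identity needed) and conceptually transparent: it exhibits the reverse Tur\'an inequality as log-convexity of the moment sequence $k\mapsto \int u^k$ of a positive weight, and makes clear why the inequality flips relative to the classical range $|x|\le 1$ (there the base $x+i\sqrt{1-x^2}\cos\theta$ is complex and the moment argument fails). The paper's route, by contrast, yields an exact expression for the Tur\'an determinant --- strictly more information than the mere inequality --- at the cost of invoking a nontrivial cited identity. Your alternative via $M_{d+1}(t)=\pi^{-1}\int_0^\pi |1+te^{i\theta}|^{2d}\,d\theta$ is also correct and fits neatly with Lemma~\ref{lem: connection btw Md and Pd}; either version would serve well here.
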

\begin{proof}
See Appendix \ref{app: proof of lemma Turan inequality}.
\end{proof}

Note that this inequality is the reverse of the Tur\'{a}n inequality \cite{Turan50} where the author considered the case $x\in [-1,1]$.

%According to Theorem \ref{theo: concentration}, $f_{d}(t)$ behaves like $\sqrt{d-1}$. In particular, it blows up as $d$ increases. However, as  shown in Theorem \ref{theo: concentration}, $\frac{f_{d}(t)}{d-1}$ is always bounded and converges to $0$ as $d$ tends to infinity. Therefore, in a mathematical sense, $\frac{f_{d}(t)}{d-1}$ behaves nicer than $f_{d}(t)$ itself. Interestingly $\frac{f_{d}(t)}{d-1}$ deceases as $d$ increases, as shown in the following theorem. 

\begin{theorem}
\label{theo: fd/d decreases}
For any given $t  > 0$, $\frac{f_{d}(t)}{d-1}$ is a decreasing  function of $d$.
\end{theorem}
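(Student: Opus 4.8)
The plan is to reduce the claimed monotonicity to the reverse Tur\'an inequality of Lemma \ref{lem: Turan inequality}, using the closed formula of Theorem \ref{theo: fd interms of Pd and Pd-1}. Saying that $\frac{f_{d}(t)}{d-1}$ is decreasing in $d$ means $\frac{f_{d+1}(t)}{d}\le\frac{f_{d}(t)}{d-1}$ for every $d\ge 2$ and every $t>0$. I would dispose of $t=1$ first: Proposition \ref{pro: properties of f}(1) gives $\frac{f_{d}(1)}{d-1}=\frac{1}{2\pi\sqrt{2d-3}}$, which is manifestly decreasing in $d$. For $t\ne 1$ the scaling identity $f_{d}(1/t)=t^{2}f_{d}(t)$ of Proposition \ref{pro: properties of f}(5) lets me restrict to $0<t<1$: once the inequality is known there, applying it to $1/t\in(0,1)$ and multiplying through by the $d$-independent factor $t^{2}$ gives it for $t>1$.

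So fix $0<t<1$ and put $x:=\frac{1+t^{2}}{1-t^{2}}$, so that $x>1$; elementary algebra gives $x+1=\frac{2}{1-t^{2}}$ and $t^{2}=\frac{x-1}{x+1}$, hence $\frac{4}{(1-t^{2})^{2}}=(x+1)^{2}$ and $\frac{1}{t^{2}}=\frac{x+1}{x-1}$. Writing $R_{m}(x):=\frac{P_{m-1}(x)}{P_{m}(x)}$, Theorem \ref{theo: fd interms of Pd and Pd-1} rewrites as
\[
\left(\frac{2\pi f_{m+1}(t)}{m}\right)^{2}=(x+1)^{2}-\frac{x+1}{x-1}\,\bigl(x-R_{m}(x)\bigr)^{2}.
\]
Applying this with $m=d$ and with $m=d-1$, and using $x+1>0$ and $x-1>0$, the target inequality $\frac{f_{d+1}(t)}{d}\le\frac{f_{d}(t)}{d-1}$ is equivalent to $\bigl(x-R_{d}(x)\bigr)^{2}\ge\bigl(x-R_{d-1}(x)\bigr)^{2}$.

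To turn this statement about squares into one about the quantities themselves, I would show that $x-R_{m}(x)\ge 0$ on $(1,\infty)$ for every $m$. This follows from the first-derivative relation \eqref{eq: first derivative Legendre}, which rearranges to $x-R_{m}(x)=\frac{(x^{2}-1)P_{m}'(x)}{m\,P_{m}(x)}$, together with the positivity $P_{m}(x)>0$ and monotonicity $P_{m}'(x)\ge 0$ for $x\ge 1$, both read off the explicit representation \eqref{eq: explicit reprentation of Legendre} (its terms have nonnegative coefficients in $x-1$ and $x+1$). Given this, $\bigl(x-R_{d}\bigr)^{2}\ge\bigl(x-R_{d-1}\bigr)^{2}$ reduces to $R_{d}(x)\le R_{d-1}(x)$, i.e.
\[
\frac{P_{d-1}(x)^{2}-P_{d}(x)P_{d-2}(x)}{P_{d}(x)P_{d-1}(x)}\le 0 .
\]
The denominator is positive for $x>1$, and the numerator is $\le 0$ exactly by Lemma \ref{lem: Turan inequality} applied with index $d-1$ (legitimate since $d\ge 2$, so $d-2\ge 0$). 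This completes the argument.

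The only delicate point is the sign bookkeeping: one must check that the reverse Tur\'an inequality is invoked in the correct direction and that the passage from $\bigl(x-R_{d}\bigr)^{2}\ge\bigl(x-R_{d-1}\bigr)^{2}$ back to $R_{d}\le R_{d-1}$ is valid, which is precisely why the nonnegativity of $x-R_{m}(x)$ on $(1,\infty)$ is inserted as an intermediate step. Everything else is routine manipulation of the formula in Theorem \ref{theo: fd interms of Pd and Pd-1}. (One could instead avoid the reduction to $0<t<1$ and argue directly for $x<-1$, but there the Legendre polynomials alternate in sign and the bookkeeping is heavier, so the symmetry reduction is the cleaner route.)
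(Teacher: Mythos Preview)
Your proposal is correct and follows essentially the same route as the paper: both restrict to $0<t<1$ via the symmetry $f_d(1/t)=t^2f_d(t)$, use the formula of Theorem~\ref{theo: fd interms of Pd and Pd-1} to reduce the claim to a comparison of the squares $\bigl(x-\tfrac{P_{d-1}}{P_d}\bigr)^2$ at $x=\tfrac{1+t^2}{1-t^2}>1$, and then invoke the reverse Tur\'an inequality of Lemma~\ref{lem: Turan inequality} to conclude that $\tfrac{P_{d-1}}{P_d}$ is decreasing in $d$. The only cosmetic difference is how the sign of $x-R_m(x)$ is established: the paper shows $P_{d+1}(x)\ge P_d(x)$ for $x\ge 1$ directly from the three-term recursion, whereas you use the first-derivative identity \eqref{eq: first derivative  Legendre} together with $P_m,P_m'\ge 0$ on $[1,\infty)$; both are short and valid.
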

\begin{proof}
We need to prove that
\begin{equation*}
\frac{f_{d+1}(t)}{d}\geq \frac{f_{d+2}(t)}{d+1}.
\end{equation*}
From \eqref{eq: main fd interms of Pd}, we have
\begin{equation}
\label{eq: fd/d interms of Pd}
4\pi^2 \left(\frac{f_{d+1}(t)}{d}\right)^2=\frac{4}{(1-t^2)^2}-\frac{1}{t^2}\left[\frac{1+t^2}{1-t^2}-\frac{P_{d-1}}{P_d}\left(\frac{1+t^2}{1-t^2}\right)\right]^2.
\end{equation}
%Using the recursive formula for the Legendre polynomials
%\begin{equation}
%\label{eq: recursive of legendre}
%(d+1)P_{d+1}(x)=(2d+1)xP_d(x)-dP_{d-1}(x).
%\end{equation}
Assume that $x\geq 1$. Then $P_d(x)>0$ for all $d$ and from \eqref{eq: recursive of legendre}, we have
\begin{equation*}
(d+1)P_{d+1}(x)\geq (2d+1)\,P_d(x)-d\,P_{d-1}(x),
\end{equation*}
which implies that
\begin{equation*}
(d+1)(P_{d+1}(x)-P_{d}(x))\geq d(P_d(x)-P_{d-1}(x))\geq \cdots\geq 1(P_1(x)-P_0(x))=x-1\geq 0.
\end{equation*}
Therefore $P_{d+1}(x)\geq P_d(x)$ for all $x\geq 1$.
We first consider the case $0\leq t<1$. From Lemma \ref{lem: Turan inequality} for $x=\frac{1+t^2}{1-t^2}\geq 1$, we have
\begin{equation*}
P_d^2\left(\frac{1+t^2}{1-t^2}\right)\leq P_{d-1}\left(\frac{1+t^2}{1-t^2}\right)P_{d+1}\left(\frac{1+t^2}{1-t^2}\right),
\end{equation*}
or equivalently
\begin{equation*}
\frac{P_{d}}{P_{d+1}}\left(\frac{1+t^2}{1-t^2}\right)\leq \frac{P_{d-1}}{P_d}\left(\frac{1+t^2}{1-t^2}\right)\leq 1\leq \frac{1+t^2}{1-t^2}.
\end{equation*}
It follows that
\begin{equation*}
\left[\frac{1+t^2}{1-t^2}-\frac{P_{d-1}}{P_d}\left(\frac{1+t^2}{1-t^2}\right)\right]^2\leq \left[\frac{1+t^2}{1-t^2}-\frac{P_{d}}{P_{d+1}}\left(\frac{1+t^2}{1-t^2}\right)\right]^2.
\end{equation*}
From this inequality and \eqref{eq: fd/d interms of Pd} as well as a similar formula for $f_{d+2}$, we obtain
\begin{equation*}
\frac{f_{d+1}(t)}{d}\geq  \frac{f_{d+2}(t)}{d+1},
\end{equation*}
which is the claimed property for the case $0\leq t<1$. The case $t\geq 1$ is then followed due to the relation $f_{d}(t)=\frac{1}{t^2}f_d\left(\frac{1}{t}\right)$.
\end{proof}
Since $E(2,d)=\int_0^\infty f_{d}(t)\,dt$ and $\mathit{SE}(2,d)=\frac{E(2,d)}{2}$, the following statement is an obvious consequence of Theorem \ref{theo: fd/d decreases}.
\begin{corollary}
\label{cor: monotonicity of E}
$\frac{E(2,d)}{d-1}$ and $\frac{\mathit{SE}(2,d)}{d-1}$ are decreasing functions of $d$.
\end{corollary}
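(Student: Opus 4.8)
The plan is to push the factor $1/(d-1)$ inside the integral representation of $E(2,d)$ and then quote the pointwise monotonicity already proved in Theorem \ref{theo: fd/d decreases}. Using the sixth property in Proposition \ref{pro: properties of f}, write
\begin{equation*}
\frac{E(2,d)}{d-1}=\frac{1}{d-1}\int_0^\infty f_d(t)\,dt=\int_0^\infty \frac{f_d(t)}{d-1}\,dt .
\end{equation*}
Fix any $t>0$. Theorem \ref{theo: fd/d decreases} says exactly that $d\mapsto \frac{f_d(t)}{d-1}$ is nonincreasing, so for integers $d_1<d_2$ one has $0\leq \frac{f_{d_2}(t)}{d_2-1}\leq \frac{f_{d_1}(t)}{d_1-1}$ for every $t>0$.

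The next step is to integrate this pointwise inequality over $(0,\infty)$. All three quantities involved are nonnegative, and the integrals are finite: by the bound $f_d(t)\leq\min\{\tfrac{\sqrt{d-1}}{2\pi t},\tfrac{d-1}{\pi}\}$ of Theorem \ref{theo: concentration} (or, more precisely, the estimate $E(2,d)\lesssim\sqrt{d-1}\ln(d-1)$ of Theorem \ref{theo: behavior of E2}) the function $f_d$ is integrable on the half-line. Hence the comparison passes to the integrals, giving $\frac{E(2,d_2)}{d_2-1}\leq \frac{E(2,d_1)}{d_1-1}$, i.e. $\frac{E(2,d)}{d-1}$ is a decreasing function of $d$. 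Finally, since $\mathit{SE}(2,d)=\tfrac12 E(2,d)$ by \cite[Theorem 3]{HTG12}, dividing through by $2$ shows that $\frac{\mathit{SE}(2,d)}{d-1}$ is decreasing in $d$ as well.

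There is no real obstacle here: the only point requiring a word of justification is the convergence of the integrals so that the termwise comparison transfers to $E(2,d)$, and this is immediate from the upper bounds established earlier. The statement is genuinely a direct corollary of Theorem \ref{theo: fd/d decreases}, the substantive work having been done there and in Theorem \ref{theo: fd interms of Pd and Pd-1}.
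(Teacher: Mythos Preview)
Your proof is correct and follows essentially the same approach as the paper: the paper simply observes that the corollary is an obvious consequence of Theorem \ref{theo: fd/d decreases} via $E(2,d)=\int_0^\infty f_d(t)\,dt$ and $\mathit{SE}(2,d)=\tfrac12 E(2,d)$. Your version is slightly more explicit in justifying the integrability, but the argument is the same.
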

The following proposition is a necessary and sufficient condition, which is stated in terms of the Legendre polynomials, for $f_{d+1}(t)$ being  increasing as a function of $d$.
\begin{proposition}
\label{prop: condition for f increase}
$f_{d+1}(t)$ is an increasing  function of $d$ if and only if, for $x=\frac{1+t^2}{1-t^2}$
\begin{align}
\label{eq: condition for fd increase}
&(d+1)^2[P^2_{d+1}(x)-P^2_{d}(x)]\cdot [P^2_{d+1}(x)+P^2_{d}(x)-2x P_{d+1}(x)P_d(x)]\nonumber
\\&\qquad+(2d+1)(x^2-1)P^2_d(x)P^2_{d+1}(x)\geq 0.
\end{align}
Furthermore, if 
\begin{equation} 
\label{eq: condition for fd increase 2}
(2d+1)P_d^4 \geq   P_{d-1}^2 \left[ (2d-1) P_{d+1}^2 + 2 P_d^2  \right] 
\end{equation}
then \eqref{eq: condition for fd increase} holds.
\end{proposition}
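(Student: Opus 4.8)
The plan is to start from Theorem~\ref{theo: fd interms of Pd and Pd-1}, written both for $f_{d+1}$ and for $f_{d+2}$. Since the density is nonnegative, $f_{d+1}(t)$ being increasing in $d$ is the same as $(2\pi f_{d+2}(t))^2-(2\pi f_{d+1}(t))^2\ge 0$ for every $d$, so everything reduces to signing this difference. The one identity that does the work is the recurrence \eqref{eq: recursive of legendre}: with $x=\tfrac{1+t^2}{1-t^2}$ it gives $(d+1)\bigl(P_{d+1}(x)-xP_d(x)\bigr)=d\bigl(xP_d(x)-P_{d-1}(x)\bigr)$, hence $d^2\bigl(x-\tfrac{P_{d-1}}{P_d}\bigr)^2=(d+1)^2\bigl(x-\tfrac{P_{d+1}}{P_d}\bigr)^2$. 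Using this to replace the index $d-1$ by $d+1$ in the formula of Theorem~\ref{theo: fd interms of Pd and Pd-1} for $f_{d+1}$, both $(2\pi f_{d+1})^2$ and $(2\pi f_{d+2})^2$ become functions of $P_d(x),P_{d+1}(x)$ and $x$ only; subtracting, the combination $\bigl(x-\tfrac{P_d}{P_{d+1}}\bigr)^2-\bigl(x-\tfrac{P_{d+1}}{P_d}\bigr)^2$ factors as the difference of squares $\tfrac{(P_{d+1}^2-P_d^2)(2xP_dP_{d+1}-P_d^2-P_{d+1}^2)}{P_d^2P_{d+1}^2}$. Finally the elementary identity $x^2-1=\tfrac{4t^2}{(1-t^2)^2}$ (immediate from $x=\tfrac{1+t^2}{1-t^2}$) clears all the $t$- and $(1-t^2)$-denominators, and multiplying through by $t^2P_d^2P_{d+1}^2>0$ turns $f_{d+2}\ge f_{d+1}$ into exactly \eqref{eq: condition for fd increase}. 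This condition is trivial at $t=0$ (there $x=1$, so $P_{d+1}^2-P_d^2=0=x^2-1$), and the range $t\ge1$ reduces to $0<t\le1$ by the fifth property in Proposition~\ref{pro: properties of f}; so from here on I take $0<t<1$, i.e. $x>1$.

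\textbf{Reduction of the sufficient condition.} For $x>1$ all of $P_{d-1}(x),P_d(x),P_{d+1}(x)$ are positive, Lemma~\ref{lem: Turan inequality} gives $P_d^2\le P_{d-1}P_{d+1}$, and the monotonicity $P_{d-1}\le P_d\le P_{d+1}$ (in fact $P_{d+1}\ge xP_d$) was obtained inside the proof of Theorem~\ref{theo: fd/d decreases}. I would complete the square,
\[
P_d^2+P_{d+1}^2-2xP_dP_{d+1}=\tfrac{d^2}{(d+1)^2}(xP_d-P_{d-1})^2-(x^2-1)P_d^2 ,
\]
(again using \eqref{eq: recursive of legendre}), and then substitute $x=\tfrac{(d+1)P_{d+1}+dP_{d-1}}{(2d+1)P_d}$ to eliminate $x$ entirely; after clearing denominators \eqref{eq: condition for fd increase} becomes a homogeneous quartic inequality in $\bigl(P_{d-1},P_d,P_{d+1}\bigr)$. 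Dividing by $P_d^4$ and putting $\mu=P_{d-1}/P_d\in(0,1]$, $\nu=P_{d+1}/P_d\ge1$ (so $\mu\nu\ge1$ by Turán), this quartic reads $G(\nu)\ge0$ with $G$ a cubic in $\nu$ of negative leading coefficient; one checks that $G$ is unimodal for $\nu>0$ and that at the Turán end $G(1/\mu)=\tfrac{(2d+1)(1-\mu^2)}{\mu^2}\ge0$, while \eqref{eq: condition for fd increase 2} is precisely the upper bound $\nu^2\le\tfrac{(2d+1)-2\mu^2}{(2d-1)\mu^2}$. The goal is then to show $G\ge0$ on the $\nu$-window cut out by these two bounds.

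\textbf{The main obstacle.} The delicate point is this last step. One cannot finish by evaluating $G$ only at the two endpoints of the $\nu$-window, since at the far end (where \eqref{eq: condition for fd increase 2} is tight) the endpoint value of $G$ is not nonnegative by itself; the argument must retain the coupling between $\mu$ and $\nu$ that is forced by $P_{d-1}/P_d$ and $P_{d+1}/P_d$ being values of Legendre polynomials at the \emph{same} point — concretely, that $P_{d+1}/P_d$ cannot be as large as \eqref{eq: condition for fd increase 2} alone would allow while $P_{d-1}/P_d$ is simultaneously small, which is once more a consequence of \eqref{eq: recursive of legendre} and Lemma~\ref{lem: Turan inequality}. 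It is in extracting this last bit of slack that the exact shape of \eqref{eq: condition for fd increase 2} (the coefficients $2d-1$, $2d+1$ and the extra term $2P_d^2$) is used, and this bookkeeping — rather than any single clever identity — is where I expect the real work to lie.
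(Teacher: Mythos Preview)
Your derivation of the equivalence is correct and is essentially the paper's argument, carried out a bit more cleanly: you use the recurrence to rewrite $d^2\bigl(x-\tfrac{P_{d-1}}{P_d}\bigr)^2$ as $(d+1)^2\bigl(x-\tfrac{P_{d+1}}{P_d}\bigr)^2$ first, so that the difference $(2\pi f_{d+2})^2-(2\pi f_{d+1})^2$ lands directly in the variables $P_d,P_{d+1},x$. The factorisation and the substitution $x^2-1=\tfrac{4t^2}{(1-t^2)^2}$ then give \eqref{eq: condition for fd increase} exactly as in the paper.

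For the second assertion, however, there is a genuine gap, and it is structural rather than computational. You try to prove \eqref{eq: condition for fd increase} at a \emph{fixed} $d$ from \eqref{eq: condition for fd increase 2} at that same $d$, together with Lemma~\ref{lem: Turan inequality}. As you yourself diagnose, this is not enough: treating $\mu=P_{d-1}/P_d$ and $\nu=P_{d+1}/P_d$ as free variables subject only to $\mu\nu\ge 1$ and the upper bound on $\nu^2$ coming from \eqref{eq: condition for fd increase 2} does not force $G(\nu)\ge 0$, because the upper endpoint value is not automatically nonnegative. No amount of ``bookkeeping'' at a single level will close this; the missing idea is a descent in $d$.

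The paper's proof reads \eqref{eq: condition for fd increase 2} as holding for \emph{all} $d$ and proceeds inductively. Set
\[
H_{d+1}\;:=\;(d+1)^2\bigl[P_{d+1}^2(x)+P_d^2(x)-2xP_{d+1}(x)P_d(x)\bigr],
\]
so that the left side of \eqref{eq: condition for fd increase} is $H_{d+1}\bigl[P_{d+1}^2-P_d^2\bigr]+(2d+1)(x^2-1)P_d^2P_{d+1}^2$. A direct computation using \eqref{eq: recursive of legendre} gives the key recursion
\[
H_{d+1}=H_d+(2d+1)(1-x^2)P_d^2(x).
\]
Substituting this, one obtains
\[
H_{d+1}\bigl[P_{d+1}^2-P_d^2\bigr]+(2d+1)(x^2-1)P_d^2P_{d+1}^2
 = H_d\bigl[P_{d+1}^2-P_d^2\bigr]+(2d+1)(x^2-1)P_d^4,
\]
and rewriting the right side as
\[
\frac{P_{d+1}^2-P_d^2}{P_d^2-P_{d-1}^2}\Bigl[\,H_d\bigl(P_d^2-P_{d-1}^2\bigr)+(2d-1)(x^2-1)P_d^2P_{d-1}^2\cdot Q\,\Bigr],
\qquad
Q=\frac{(2d+1)P_d^2(P_d^2-P_{d-1}^2)}{(2d-1)P_{d-1}^2(P_{d+1}^2-P_d^2)},
\]
one sees that \eqref{eq: condition for fd increase 2} is \emph{precisely} the statement $Q\ge 1$. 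Since $(2d-1)(x^2-1)P_d^2P_{d-1}^2>0$ for $x>1$, this yields
\[
H_{d+1}\bigl[P_{d+1}^2-P_d^2\bigr]+(2d+1)(x^2-1)P_d^2P_{d+1}^2
\;\ge\;\frac{P_{d+1}^2-P_d^2}{P_d^2-P_{d-1}^2}\Bigl[\,H_d\bigl(P_d^2-P_{d-1}^2\bigr)+(2d-1)(x^2-1)P_d^2P_{d-1}^2\,\Bigr],
\]
which is the same expression with $d$ lowered by one, up to a positive factor. Iterating down to $d=1$ (using \eqref{eq: condition for fd increase 2} at each step) lands on $H_1(P_1^2-P_0^2)+(x^2-1)P_1^2P_0^2$, and since $H_1=1-x^2$ this equals $0$; hence \eqref{eq: condition for fd increase} holds. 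In short, the ``coupling'' you were looking for between $\mu$ and $\nu$ at a single level is replaced by a telescoping argument that spends \eqref{eq: condition for fd increase 2} once at every level from $d$ down to $2$.
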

\begin{proof}
See Appendix \ref{app: proof of lemma condition for f increase}.
\end{proof}
We numerically verify the inequality \eqref{eq: condition for fd increase 2} in Figure \ref{fig:fd increase}; however, it is unclear to us how to prove it rigorously. We also recall that it is shown in Theorem \ref{theo: concentration} that $f_d(t)$ behaves like $\sqrt{d-1}$, which is an increasing function of $d$, as $d$ is sufficiently large. Motivated by these observations, we make the following prediction.
\begin{conjecture}
\label{cojecture: fd increases}
For any given $t > 0$, $f_d(t)$ is an  increasing function of $d$.
\end{conjecture}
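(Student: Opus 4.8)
\medskip
\noindent\textbf{A strategy toward Conjecture~\ref{cojecture: fd increases}.}
The plan is to reduce the conjecture to a single inequality for Legendre polynomials and then to attack that inequality --- either algebraically, through the recurrence~\eqref{eq: recursive of legendre} together with the reverse Tur\'{a}n inequality of Lemma~\ref{lem: Turan inequality}, or analytically, through the Laplace--Heine asymptotics of $P_d$. Writing $x=\frac{1+t^2}{1-t^2}$ and comparing the closed form~\eqref{eq: fd interms of Pd} of Theorem~\ref{theo: fd interm of Pd} at two consecutive orders (so that $f_{d+1}$ involves $P_d$ and $f_{d+2}$ involves $P_{d+1}$), and using $x^2-1=\frac{4t^2}{(1-t^2)^2}$, one checks that ``$f_d(t)$ is increasing in $d$'' is equivalent to
\begin{equation}
\label{eq: conj reform}
(x^2-1)\left[\left(\frac{P_{d+1}'(x)}{P_{d+1}(x)}\right)^{2}-\left(\frac{P_d'(x)}{P_d(x)}\right)^{2}\right]\le 2d+1\qquad(d\ge 1),
\end{equation}
which is the condition~\eqref{eq: condition for fd increase} of Proposition~\ref{prop: condition for f increase} in a cleaner form. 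Since $(P_d'/P_d)^2$ is even and $f_d(t)=t^{-2}f_d(1/t)$, it suffices to prove~\eqref{eq: conj reform} for $x\ge 1$; the endpoints $t=0$ (where $f_d(0)=(d-1)/\pi$) and $t=1$ (where $f_d(1)^2=\frac{(d-1)^2}{4\pi^2(2d-3)}$ is increasing in $d$ for integers $d\ge 2$) are handled by hand.

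For the \emph{algebraic} route, normalise~\eqref{eq: conj reform} by $P_d(x)^4>0$ and write $w=x-\frac{P_{d-1}(x)}{P_d(x)}=\frac{(x^2-1)P_d'(x)}{dP_d(x)}$ (the second identity being the first-derivative relation $\frac{x^2-1}{d}P_d'(x)=xP_d(x)-P_{d-1}(x)$). For $x\ge 1$ one has $w\in[\,x-1,\,\sqrt{x^2-1}\,]$: the lower bound because $P_{d-1}(x)\le P_d(x)$ there (shown inside the proof of Theorem~\ref{theo: fd/d decreases}), and the upper bound because $q_d(x):=P_{d-1}(x)/P_d(x)\ge x-\sqrt{x^2-1}$, which follows by induction from the continued-fraction recursion $q_{d+1}(x)=\frac{d+1}{(2d+1)x-dq_d(x)}$ coming from~\eqref{eq: recursive of legendre}. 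The same recursion gives $\frac{(x^2-1)P_{d+1}'(x)}{(d+1)P_{d+1}(x)}=\frac{(d+1)(x^2-1)+dxw}{(d+1)x+dw}$, and inserting this into~\eqref{eq: conj reform} and clearing denominators turns it into the single polynomial inequality in the one unknown $w$ (with parameters $x\ge 1$ and $d\in\mathbb{N}$)
\begin{equation}
\label{eq: poly-target-conj}
\bigl[(d+1)^2(x^2-1)-d^2w^2\bigr]\bigl[(d+1)^2(x^2-1)+2d(d+1)xw+d^2w^2\bigr]\le (2d+1)(x^2-1)\bigl[(d+1)x+dw\bigr]^{2}.
\end{equation}
The conjecture is then exactly the assertion that~\eqref{eq: poly-target-conj} holds at $w=w_d(x)$ for all $x\ge 1$ and all $d$.

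For the \emph{analytic} route, set $x=\cosh\theta$, $\theta\ge 0$. Differentiating the classical Laplace--Heine expansion of $P_d(\cosh\theta)$ gives $\sqrt{x^2-1}\,\frac{P_d'(x)}{P_d(x)}=d+\tfrac12(1-\coth\theta)+O(1/d)$, the remainder admitting a full asymptotic series in $1/d$; hence the left-hand side of~\eqref{eq: conj reform} equals $2d+2-\coth\theta+o(1)$ as $d\to\infty$, which is $<2d+1$ for each fixed $x>1$ once $d$ is large, since $\coth\theta>1$ for $\theta>0$. This is consistent with $f_d(t)\sim\sqrt{d-1}$ from Theorem~\ref{theo: concentration}. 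Promoting ``$d$ large'' to ``all $d$'' would require replacing the $O(1/d)$ term by an explicit enclosure uniform in $\theta$ --- e.g. via the recursion for $q_d$ above together with a quantitative rate for the convergence $q_d(x)\downarrow x-\sqrt{x^2-1}$ --- after which the remaining small values of $d$ could be settled by rigorous numerics.

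The difficulty, common to both routes, is that the target inequality is \emph{asymptotically tight}: both sides of~\eqref{eq: conj reform}, equivalently of~\eqref{eq: poly-target-conj}, grow like $2d+1$, not only as $d\to\infty$ for fixed $x>1$ but also as $x\to\infty$ for fixed $d$ (there $\sqrt{x^2-1}\,P_d'(x)/P_d(x)\to d$ exactly, since $2^{-d}\binom{2d}{d}$ is the leading coefficient of $P_d$); moreover~\eqref{eq: poly-target-conj} is already \emph{violated} at the crude endpoint $w=x-1$ (e.g. for $d=1$, $x=2$), so only the finer location of $w_d(x)$ inside $(x-1,\sqrt{x^2-1})$ rescues it. Hence no slack estimate can succeed, and what is really needed is a sharp, fully uniform (in $d$ and in $x\ge 1$) \emph{quantitative} reverse Tur\'{a}n inequality --- something like $\frac{P_{d-1}(x)P_{d+1}(x)}{P_d(x)^2}\le 1+\frac{c(x)}{d}+\cdots$ with the right constant $c(x)$, i.e. an upgrade of the qualitative log-convexity of Lemma~\ref{lem: Turan inequality} to one with a controlled rate, where the two degeneration regimes $d\to\infty$ and $x\to\infty$ must be reconciled with the fact that the available gain $\coth\theta-1=\frac{2}{e^{2\theta}-1}$ itself tends to $0$ as $\theta\to\infty$. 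Obtaining such a bound is, I expect, the main obstacle, and is presumably why the statement is recorded here only as a conjecture.
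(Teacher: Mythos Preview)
The statement is recorded in the paper as a \emph{conjecture}, not a theorem: the paper does not prove it. Its entire treatment consists of the reduction in Proposition~\ref{prop: condition for f increase} (giving the necessary-and-sufficient condition~\eqref{eq: condition for fd increase} and the sufficient condition~\eqref{eq: condition for fd increase 2}) plus numerical verification of~\eqref{eq: condition for fd increase 2} in Figure~\ref{fig:fd increase} and direct plots in Figure~\ref{fig:fd}c. You correctly recognise this and present a strategy rather than a claimed proof, so there is no erroneous step to flag.

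On the comparison: your reformulation~\eqref{eq: conj reform} is exactly the paper's condition~\eqref{eq: condition for fd increase} rewritten via~\eqref{eq: fd interms of Pd} instead of~\eqref{eq: main fd interms of Pd}; the two are related through the first-derivative identity~\eqref{eq: first derivative Legendre}, and yours is the cleaner of the pair. Where the paper diverges from your outline is in its second move: rather than attack the inequality head-on, it extracts the \emph{stronger} sufficient condition $(2d+1)P_d^4 \ge P_{d-1}^2\bigl[(2d-1)P_{d+1}^2+2P_d^2\bigr]$ and shows, by a telescoping argument in Appendix~\ref{app: proof of lemma condition for f increase}, that this implies~\eqref{eq: condition for fd increase}. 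That sufficient condition is then left to numerics. Your proposal, by contrast, goes further on the diagnostic side: the Laplace--Heine computation showing the left side of~\eqref{eq: conj reform} equals $2d+2-\coth\theta+o(1)$, the observation that the inequality is asymptotically tight in \emph{both} limits $d\to\infty$ and $x\to\infty$, and the remark that the crude endpoint $w=x-1$ already fails~\eqref{eq: poly-target-conj} (so only the precise location of $w_d(x)$ saves it) are all absent from the paper and explain more sharply why a slack bound cannot work. Your framing of the missing ingredient as a quantitative reverse Tur\'{a}n inequality with the correct constant is essentially what the paper's sufficient condition~\eqref{eq: condition for fd increase 2} is groping toward, stated more precisely. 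Neither route, as you note, closes the gap.
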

%\begin{proof} 
%From Lemma above, since $P_{d+1}(x) \geq P_d(x)$ for $x \geq 1$, it is sufficient to prove, for $x \geq 1$, that
%$$P^2_{d+1}(x)+P^2_{d}(x)-2x P_{d+1}(x)P_d(x) \geq 0 $$
%We prove that  the condition \eqref{eq: condition for fd increase} obtained in Lemma \ref{lem: condition for f increase} holds. 
%\textcolor{red}{TA: These inequalities seem to hold in general (I have checked for $d = 1$ and $d = 2$) ... But still need  to prove. 
%I tried to run with Mathematica but does not finish after a while. 
%}

%\textcolor{red}{Why is the first inequality?}
%Since  $P_{d+1}(x) \geq P_d(x) > 0$, it follows that 
%\begin{align*}
%&(d+1)^2[P^2_{d+1}(x)-P^2_{d}(x)]\cdot [P^2_{d+1}(x)+P^2_{d}(x)-2x P_{d+1}(x)P_d(x)]
%\\&\qquad+(2d+1)(x^2-1)P^2_d(x)P^2_{d+1}(x) \\
% &\geq (1-x^2) [P^2_{d+1}(x)-P^2_{d}(x)] + (2d+1)(x^2-1)P^2_d(x)P^2_{d+1}(x) \\
%  &= (x^2-1) [ (2d+1)P^2_d(x)P^2_{d+1}(x) -   P^2_{d+1}(x) + P^2_{d}(x)] \\
%  & \geq 0,
%\end{align*}

%which, according to Lemma \ref{lem: condition for f increase}, is equivalent to the claimed statement that $f_{2,d}(t)$ is an increasing function  of $d$.
%\end{proof}
%\vspace{1cm}
%Since $E(2,d)=\int_0^\infty f_{d}(t)\,dt$ and $\mathit{SE}(2,d)=\frac{E(2,d)}{2}$, the following corollary is a straightforward consequence of the above theorem.
%\begin{corollary}
%\label{cor: increasing}
%$E(2,d)$ and $\mathit{SE}(2,d)$ are increasing functions of $d$. \end{corollary}
We provide further numerical simulation to support this Conjecture by directly plotting $f_d(t)$ in Figure \ref{fig:fd}c.
\begin{remark}
\label{remark:from_t_to_x}
We recall that in the case $n=2$, the variable $t$ is defined by $t=\frac{y}{1-y}$, where $y$ is the fraction of strategy $1$ and $1-y$ is that of strategy $2$. Equivalently, $y$ can be expressed in terms of $t$ as $y=\frac{t}{1+t}$. Hence one can also transform the statements of the theorem above (and later) in terms of $y$. As has been shown in the beginning of Section \ref{sec: pre}, the transformation from $y$ to $t$ has the advantage that it transforms a complex equation (Eq. \eqref{eq: eqn for fitness d2}) to a univariate polynomial equation (Eq. \eqref{eq: eqn for y}). This enables us to exploit many available results and techniques from the literature of random polynomial theory.
Moreover, from the relationship between $f(t)$ and $g(y)$, it follows that all the monotonicity properties with respect to $d$ are reserved for $g_{2,d}$  (see a numerical illustration in Figure \ref{fig:f2d_depend_frequency_x}).
\end{remark}

\section{General Cases}
\label{sec: general games}
In this section, first we prove an estimate for the density $f_{n,2}(\mathbf{t})$ similarly as in Theorem \ref{theo: concentration} for a two-player multi-strategy game. The expected number of internal equilibria in this case has been computed explicitly in \cite[Theorem 3]{DH15}. We then conjecture an asymptotic formula for the general case. Finally, we provide numerical simulations to support our conjectures, as well as  the main results in the previous sections.
\subsection{Two-player multi-strategy games}
\begin{theorem}[two-player multi-strategy games]\\
\label{theo: En2}
Assume that $\{A^i=(\beta_j^i)_{j=1,\cdots,n-1}; i=1,\ldots, n-1\}$ are independent random vectors, then it holds that
\begin{align}
\label{eq: estimate of f2n}
& f_{n,2}(\mathbf{t})=\pi^{-\frac{n}{2}}\Gamma\left(\frac{n}{2}\right) \quad \text{if}~~ t_1\cdots t_{n-1}=0 \quad \text{and}
\\& f_{n,2}(\mathbf{t})\leq \pi^{-\frac{n}{2}}\Gamma\left(\frac{n}{2}\right)\min\left\{1,\frac{1}{n^{\frac{n}{2}}\,t_1\,\cdots\, t_{n-1}}\right\}\quad \text{if}~~ t_1\cdots t_{n-1}\neq 0. \label{eq: estimate of f2n2}
\end{align}
As a consequence, for any $\mathbf{t}$ such that $t_1\cdots t_{n-1}\neq 0$
\begin{equation}
\lim_{n\rightarrow\infty}\frac{f_{n,2}(\mathbf{t})}{\pi^{-\frac{n}{2}}\Gamma\left(\frac{n}{2}\right)}=0.
\end{equation}
\end{theorem}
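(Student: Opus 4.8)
The plan is to reduce Theorem \ref{theo: En2} to an explicit evaluation of the Edelman--Kostlan formula of Lemma \ref{lemma: E(n,d)}, which in the degenerate degree $d=2$ collapses to a rank-one determinant. First I would specialise the generating kernel $v(\mathbf{x})^{T}Cv(\mathbf{y})$: for $d=2$ the admissible multi-indices $(k_1,\dots,k_{n-1})$ with $0\le k_i\le 1$ and $\sum_i k_i\le 1$ are exactly the zero tuple and the $n-1$ standard basis vectors, and every multinomial coefficient that occurs equals $1$, so $C=I_n$ and
\begin{equation*}
v(\mathbf{x})^{T}Cv(\mathbf{y})=1+\sum_{j=1}^{n-1}x_jy_j=:S(\mathbf{x},\mathbf{y}).
\end{equation*}
Put $\sigma:=S(\mathbf{t},\mathbf{t})=1+\sum_{j=1}^{n-1}t_j^{2}$. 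A direct differentiation gives $\partial_{x_i}\partial_{y_j}\log S\big|_{\mathbf{x}=\mathbf{y}=\mathbf{t}}=\sigma^{-1}\delta_{ij}-\sigma^{-2}t_it_j$, hence $L(\mathbf{t})=\sigma^{-1}\bigl(I-\sigma^{-1}\mathbf{t}\mathbf{t}^{T}\bigr)$. Since $I-\sigma^{-1}\mathbf{t}\mathbf{t}^{T}$ is a rank-one perturbation of the identity, the matrix determinant lemma yields $\det\bigl(I-\sigma^{-1}\mathbf{t}\mathbf{t}^{T}\bigr)=1-\sigma^{-1}\|\mathbf{t}\|^{2}=\sigma^{-1}$ because $\|\mathbf{t}\|^{2}=\sigma-1$, so $\det L(\mathbf{t})=\sigma^{-(n-1)}\sigma^{-1}=\sigma^{-n}$. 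Lemma \ref{lemma: E(n,d)} then gives the closed form
\begin{equation*}
f_{n,2}(\mathbf{t})=\pi^{-n/2}\,\Gamma(n/2)\,\sigma^{-n/2}=\pi^{-n/2}\,\Gamma(n/2)\Bigl(1+\sum_{j=1}^{n-1}t_j^{2}\Bigr)^{-n/2},
\end{equation*}
which reduces to $f_2(t)=\frac{1}{\pi(1+t^{2})}$ when $n=2$ and, on integration over $\mathbb{R}_+^{n-1}$, recovers $E(n,2)$ of \cite[Theorem 3]{DH15}.

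From this closed form the three assertions follow quickly. The first identity is the value at the origin, $f_{n,2}(\mathbf{0})=\pi^{-n/2}\Gamma(n/2)$ (where $\sigma=1$), and $f_{n,2}(\mathbf{t})\le\pi^{-n/2}\Gamma(n/2)$ everywhere since $\sigma\ge1$. For the remaining half of \eqref{eq: estimate of f2n2} I would apply the arithmetic--geometric mean inequality to the $n$ nonnegative numbers $1,t_1^{2},\dots,t_{n-1}^{2}$: this gives $\sigma/n\ge(t_1^{2}\cdots t_{n-1}^{2})^{1/n}$, i.e.\ $\sigma^{n/2}\ge n^{n/2}\,t_1\cdots t_{n-1}$, whence $\sigma^{-n/2}\le (n^{n/2}t_1\cdots t_{n-1})^{-1}$ whenever $t_1\cdots t_{n-1}\neq0$; together with $\sigma^{-n/2}\le1$ this is exactly \eqref{eq: estimate of f2n2}. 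Finally the limit is immediate: $t_1\cdots t_{n-1}\neq0$ forces $t_1\neq0$, so $f_{n,2}(\mathbf{t})/\bigl(\pi^{-n/2}\Gamma(n/2)\bigr)=\sigma^{-n/2}\le(1+t_1^{2})^{-n/2}\to0$ as $n\to\infty$.

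I do not foresee a serious obstacle: once the $d=2$ reduction is set up the argument is a short computation. The two points that reward care are (i) identifying $v(\mathbf{x})^{T}Cv(\mathbf{y})$ correctly, in particular retaining the constant term $1$, which is precisely what makes $L(\mathbf{t})$ invertible and produces $\det L=\sigma^{-n}$ instead of a degenerate determinant; and (ii) applying AM--GM to the full list of $n$ numbers with the ``$1$'' included, which is what matches the factor $n^{n/2}$ appearing in \eqref{eq: estimate of f2n2}.
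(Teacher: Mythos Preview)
Your proof is correct and follows the same route as the paper: both start from the closed form $f_{n,2}(\mathbf{t})=\pi^{-n/2}\Gamma(n/2)\bigl(1+\sum_{k}t_k^{2}\bigr)^{-n/2}$ and then bound $\sigma^{n/2}$ below via the AM--GM inequality applied to the $n$ numbers $1,t_1^{2},\dots,t_{n-1}^{2}$ (the paper labels this step ``Cauchy--Schwartz'', but it is the same inequality you use). The only difference is cosmetic: you derive the closed form from Lemma~\ref{lemma: E(n,d)} by computing $\det L(\mathbf{t})=\sigma^{-n}$ through the rank-one perturbation identity, whereas the paper simply cites it from \cite[Theorem~3]{DH15}.
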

\begin{proof}
According to \cite[Theorem 3]{DH15}
\begin{equation}
f_{n,2}(\mathbf{t})=\pi^{-\frac{n}{2}}\Gamma\left(\frac{n}{2}\right)\frac{1}{\left(1+\sum\limits_{k=1}^{n-1} t_k^2\right)^\frac{n}{2}}.
\end{equation}
The first assertion is then followed. Now suppose that $t_1\cdots t_{n-1}\neq 0$. By the Cauchy-Schwartz inequality, which states $\sum_{i=1}^n a_n\geq n (a_1\cdots a_n)^\frac{1}{n}$ for $n$ all $n$ positive numbers $a_1,\cdots,a_n$, we have
\begin{equation*}
\left(1+\sum\limits_{k=1}^{n-1} t_k^2\right)^\frac{n}{2}\geq n^{\frac{n}{2}}\,t_1\,\cdots\, t_{n-1}.
\end{equation*}
Therefore
\begin{equation*}
f_{n,2}(\mathbf{t})\leq \pi^{-\frac{n}{2}}\Gamma\left(\frac{n}{2}\right)\frac{1}{n^{\frac{n}{2}}\,t_1\,\cdots\, t_{n-1}}.
\end{equation*}
On the other hand, since $\left(1+\sum\limits_{k=1}^{n-1} t_k^2\right)^\frac{n}{2}\geq 1$, it follows that
\begin{equation*}
f_{n,2}(\mathbf{t})\leq  \pi^{-\frac{n}{2}}\Gamma\left(\frac{n}{2}\right).
\end{equation*}
Therefore, we obtain \eqref{eq: estimate of f2n2}.
\end{proof} 
Note that the expected number of internal equilibria for a two-player multi-strategy game is given by \cite[Theorem 3]{DH15}
\begin{equation*}
E(n,2)=\frac{1}{2^{n-1}}.
\end{equation*}
\begin{table}[ht]
\caption{Expected number of internal equilibria for  $n = 3$ and different $d$, as generated from $E(3,d)$ and from averaging over $10^6$ random samples  of the system of equation in \ref{eq: eqn for fitness2}. In all cases, results from random samplings are slightly smaller $E(3,d)$.}  
\vspace{0.2cm}
\begin{tabular}{l c l*{7}{c}r}
d                     & 2 & 3 & 4 & 5  \\
\hline
\hline
$E(3,d)$ from Theory           & 0.25 & 0.57 & 0.92 & 1.29 \\
$E(3,d)$  from Random Sampling            & 0.249496 & 0.569169 & 0.910236 & 1.28898   \\
\end{tabular}

\label{table:Ed}
 \end{table}

\begin{remark}
\label{rem: assumption}
In Theorem \ref{theo: En2}, we need an assumption that
the random vectors $\{A^i=(\beta_j^i)_{j=1,\cdots,n-1}; i=1,\ldots, n-1\}$ are independent. Recalling from Section \ref{sec: pre} that $\beta^{i}_{k_{1}, ..., k_{n-1} }= \alpha^{i}_{k_{1}, ..., k_{n} } -\alpha^{n}_{k_{1}, ..., k_{n} } $, where  $\alpha^{i}_{i_1,\ldots,i_{d-1}}$ is the payoff of the focal player, with  $i$ ($1 \leq i \leq n$) being the strategy of the focal player, and $i_k$ (with $1 \leq i_k \leq n$ and $1 \leq k \leq d-1$) is the strategy of the player in position $k$. The assumption clearly holds for $n = 2$. For $n>2$, the assumption  holds only under quite restrictive conditions such as $\alpha^{n}_{k_{1}, ..., k_{n} }$ is deterministic or $\alpha^{i}_{k_{1}, ..., k_{n} }$ are essentially identical. Note that the assumption is necessary to apply \cite[Theorem 7.1]{EK95}.  Hence, to remove this assumption, one would need to generalize \cite[Theorem 7.1]{EK95}. This is difficult and is still an open problem~\cite{Kos93,Kos01}. Nevertheless, since the system~\eqref{eq: eqn for fitnessy} has not been studied in the mathematical literature, it is interesting on its own to investigate the number of real zeros of this system under the assumption of independence of $\{A^i\}$. As such, the investigation not only provides new insights into the theory of zeros of systems of random polynomials but also gives important hint on the complexity of the game theoretical question, i.e. the number of expected number of equilibria. In Figure \ref{fig:theory vs samplings} and Table \ref{table:Ed}, we numerically compare the density function $g_{3,d}$ and the expected number of equilibria $E_{3,d}$ computed from the theory with the above assumption and from samplings without the assumption. We observe that $g_{3,d}$ have the same shape (behaviour) in both cases. In addition, the values of $E_{3,d}$ computed from samplings are slightly smaller than  those  computed from the theory.
\end{remark}

We now make a conjecture for the expected density and expected number of equilibria in a general multi-player multi-strategy game.
\begin{conjecture} 
\label{con: conjecture}
In a multi-player multi-strategy game, it holds that 
\begin{equation}
f_{n,d}\sim (d-1)^\frac{n-1}{2}\quad\text{and}\quad \lim_{d\rightarrow\infty}\frac{\ln E(n,d)}{\ln (d-1)}=\frac{n-1}{2}.
\end{equation}
We envisage that even a stronger statement holds
\begin{equation}
E(n,d)\sim (d-1)^\frac{n-1}{2}.
\end{equation}
\end{conjecture}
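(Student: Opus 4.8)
The plan is to reduce the statement to a local central limit theorem for a \emph{squared multinomial law}. Write $N:=d-1$. A direct computation, differentiating $\log\big(v(\mathbf x)^{T}Cv(\mathbf y)\big)$ in Lemma~\ref{lemma: E(n,d)} and setting $\mathbf x=\mathbf y=\mathbf t$, gives $L_{ij}(\mathbf t)=t_i^{-1}t_j^{-1}\,\mathrm{Cov}_{p}(k_i,k_j)$ and hence
\begin{equation*}
f_{n,d}(\mathbf t)=\pi^{-n/2}\Gamma(n/2)\,\frac{\sqrt{\det\Sigma}}{t_1\cdots t_{n-1}},\qquad \Sigma:=\mathrm{Cov}_{p}(\mathbf k),
\end{equation*}
where $p$ is the probability measure on lattice points $\mathbf k=(k_1,\dots,k_{n-1})$, $k_i\ge0$, $\sum_{i<n}k_i\le N$, given by $p(\mathbf k)\propto\binom{N}{k_1,\dots,k_n}^2\prod_{i<n}t_i^{2k_i}$ with $k_n:=N-\sum_{i<n}k_i$. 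The structural point is that $p\propto(\widetilde p)^2$, where $\widetilde p=\mathrm{Mult}(N;\mathbf x)$ is the multinomial law whose success probabilities are precisely the strategy frequencies $x_i=t_i/(1+\sum_{l<n}t_l)$ ($i<n$), $x_n=1/(1+\sum_{l<n}t_l)$. Rewriting in the frequency variable, $g_{n,d}(\mathbf x)=\pi^{-n/2}\Gamma(n/2)\sqrt{\det\Sigma}/(x_1\cdots x_n)$ with $p\propto\mathrm{Mult}(N;\mathbf x)^2$. For $n=2$ this links, via Lemma~\ref{lem: connection btw Md and Pd} and Proposition~\ref{pro: properties of f}(3), to the zeros of Legendre polynomials; for $n\ge3$ no such explicit handle exists and one must argue probabilistically.

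The density asymptotics then come from the fact that squaring concentrates a distribution. By a lattice local CLT, $\widetilde p$ is close to $\mathcal N(N\mathbf x,\,NV(\mathbf x))$ near its mean, with $V(\mathbf x)=\mathrm{diag}(x_i)-\mathbf x\mathbf x^{T}$ (first $n-1$ coordinates); squaring halves the covariance, so $\Sigma\sim\tfrac N2 V(\mathbf x)$ and $\det\Sigma\sim(N/2)^{n-1}\det V(\mathbf x)=(N/2)^{n-1}\prod_{i=1}^{n}x_i$. Substituting gives, for every fixed interior $\mathbf t$,
\begin{equation*}
\frac{f_{n,d}(\mathbf t)}{(d-1)^{(n-1)/2}}\ \longrightarrow\ \frac{\pi^{-n/2}\Gamma(n/2)}{2^{(n-1)/2}}\cdot\frac{1}{\big(1+\sum_{l<n}t_l\big)^{n/2}\sqrt{t_1\cdots t_{n-1}}},
\end{equation*}
i.e.\ $f_{n,d}\sim(d-1)^{(n-1)/2}$, in agreement with Theorems~\ref{theo: concentration} and~\ref{theo: En2}. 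Making this rigorous needs a quantitative local limit theorem for the squared multinomial, uniform on compacts of the open simplex; for $n=2$ it can alternatively be extracted from Theorem~\ref{theo: fd interms of Pd and Pd-1} and the arcsine law for the zeros of $P_{d-1}$ (which are the $-r_i$ of Proposition~\ref{pro: properties of f}(3)).

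For $E(n,d)$ I would first obtain the weak statement $\ln E(n,d)/\ln(d-1)\to(n-1)/2$ exactly as in the proof of Theorem~\ref{theo: behavior of E2}: a lower bound $E(n,d)\ge\int_K f_{n,d}\gtrsim(d-1)^{(n-1)/2}$ over a fixed compact $K$, and an upper bound obtained by splitting the integration domain into a bulk part and neighbourhoods of the boundary faces and of infinity, using Hadamard's inequality together with the crude bound $\mathrm{Var}_p(k_i)\le N^2/4$; the polynomial-and-logarithmic factors vanish after taking logarithms. The strong form $E(n,d)\sim(d-1)^{(n-1)/2}$ would then follow by dominated convergence once one shows that $g_{n,d}(\mathbf x)/(d-1)^{(n-1)/2}$ is bounded, uniformly in $d$, by a fixed integrable function on the simplex; the natural candidate is $C_n/\sqrt{x_1\cdots x_n}$, the unnormalised $\mathrm{Dirichlet}(\tfrac{1}{2},\dots,\tfrac{1}{2})$ density, whose integral over the simplex is $\pi^{n/2}/\Gamma(n/2)$. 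Together with the pointwise limit this produces the explicit constant
\begin{equation*}
\lim_{d\to\infty}\frac{E(n,d)}{(d-1)^{(n-1)/2}}=\frac{\pi^{-n/2}\Gamma(n/2)}{2^{(n-1)/2}}\int_{\mathbb R_{+}^{n-1}}\frac{d\mathbf t}{\big(1+\sum_{l<n}t_l\big)^{n/2}\sqrt{t_1\cdots t_{n-1}}}=\frac{1}{2^{(n-1)/2}},
\end{equation*}
the integral being elementary after the substitution $t_i=u_i^{2}$.

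The main obstacle is the uniform bound $\det\Sigma\le C_n\,N^{n-1}\prod_{i=1}^{n}x_i$ valid for \emph{all} $N$ and all $\mathbf x$ in the simplex. This is strictly stronger than Hadamard's inequality: near a face of the simplex the $k_i$ become strongly correlated, $\det\Sigma$ is much smaller than $\prod_i\mathrm{Var}_p(k_i)$, and it is precisely this gain that makes the dominating function integrable — so one genuinely needs the correlation structure of $p=\mathrm{Mult}(N;\mathbf x)^2$, not merely its marginals. For $n=2$ the required bound is the single inequality $\mathrm{Var}_{p_t}(k)\le C\,(d-1)\,x(1-x)$ (with $x=t/(1+t)$), uniformly in $d$ — a statement \emph{not} contained in Theorem~\ref{theo: concentration}, which only yields $\mathrm{Var}_{p_t}(k)\le(d-1)/4$, and which by Proposition~\ref{pro: properties of f}(3) amounts to quantifying how fast the zeros of $P_{d-1}$ accumulate at $+1$. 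Morally it is the assertion that squaring a log-concave lattice distribution cannot increase its variance; turning this into a rigorous argument, and — more seriously still — finding the right substitute for the Legendre/roots machinery when $n\ge3$, where $\Sigma$ no longer factors through the zeros of a single polynomial, is where the real work lies and why the statement is stated only as Conjecture~\ref{con: conjecture}.
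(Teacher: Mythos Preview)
The statement you are addressing is explicitly labeled a \emph{conjecture} in the paper and is not proved there. The paper's entire treatment consists of (i) the analogy with the multivariate elliptic system, for which $v(\mathbf x)^TCv(\mathbf y)=(1+\mathbf x\cdot\mathbf y)^{d-1}$ is a generating function and the answer $E_{EP}=(d-1)^{(n-1)/2}$ falls out in closed form, (ii) the rigorous $n=2$ result of Theorem~\ref{theo: behavior of E2}, and (iii) numerical evidence (Figures~\ref{fig:lnEdivlnD}--\ref{fig:f_n3n4}). No argument, not even a heuristic one along your lines, is offered for general $n$.

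Your proposal is therefore not competing with a proof but with a two--line motivation, and it goes substantially further. The identification $L_{ij}(\mathbf t)=t_i^{-1}t_j^{-1}\mathrm{Cov}_p(k_i,k_j)$ with $p\propto\mathrm{Mult}(N;\mathbf x)^2$ is correct (for $n=2$ it reproduces $(2\pi f_d)^2=4\,\mathrm{Var}_p(k)/t^2$, which is exactly Proposition~\ref{pro: properties of f}(2)--(3)), and it gives a genuinely probabilistic explanation of \emph{why} one should expect the exponent $(n-1)/2$: squaring a multinomial halves its asymptotic covariance, so $\det\Sigma\sim(N/2)^{n-1}\prod_i x_i$. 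This is a different, and more informative, heuristic than the paper's bare analogy with elliptic polynomials; in particular your predicted constant $\lim_{d\to\infty}E(n,d)/(d-1)^{(n-1)/2}=2^{-(n-1)/2}$ is nowhere in the paper and, if correct, would for $n=2$ sharpen Theorem~\ref{theo: behavior of E2} from $\sqrt{d-1}\lesssim E(2,d)\lesssim\sqrt{d-1}\ln(d-1)$ to an actual asymptotic $E(2,d)\sim\sqrt{d-1}/\sqrt{2}$.

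You are also honest about where the argument is incomplete, and your diagnosis is accurate. The pointwise limit via a local CLT for the squared multinomial is plausible on compacts of the open simplex, but the passage to $E(n,d)$ requires the uniform domination $\det\Sigma\le C_n N^{n-1}\prod_i x_i$ \emph{up to the boundary}, and this is precisely what is not available: for $n=2$ it is the statement $\mathrm{Var}_p(k)\le C(d-1)x(1-x)$ uniformly in $d$, which is strictly stronger than the bound $\mathrm{Var}_p(k)\le(d-1)/4$ behind Theorem~\ref{theo: concentration} and is in fact equivalent (via Proposition~\ref{pro: properties of f}(3)) to controlling how the roots $-r_i$ of $M_d$ crowd near $0$. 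For $n\ge3$ there is no Legendre-polynomial substitute, as you note. So your proposal is best read as a research programme rather than a proof, and you say so yourself; the paper makes no stronger claim.
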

This conjecture is motivated from Theorem \ref{theo: behavior of E2} and a similar result for the system of multivariate elliptic polynomial as follows (see also Remark \ref{re: elliptic polynomial}). Consider a system of $n-1$ random polynomials, each of the form
\begin{equation}
\label{eq: eqn for fitness3}
\sum\limits_{\substack{0\leq k_{1}, ..., k_{n}\leq d-1,\\ \sum^{n}\limits_{i = 1}k_i = d-1  }}\beta_{k_{1}, ...,k_{n} }\sqrt{\begin{pmatrix}
d-1\\
k_{1}, ..., k_n
\end{pmatrix}} \prod\limits_{i=1}^{n}x_{i}^{k_i},
\end{equation}
where the coefficients $\beta_{k_{1}, ...,k_{n} }$ are independent standard multivariate normal distribution. Then according to \cite[Section 7.2]{EK95}, the expected density and the expected number of real zeros are given by
\begin{equation}
\label{eq: EP system}
f_{EP}(\mathbf{t})=\pi^{-\frac{n}{2}}\Gamma\left(\frac{n}{2}\right)\frac{(d-1)^{\frac{n-1}{2}}}{(1+\mathbf{t}\cdot \mathbf{t})^\frac{n}{2}},\quad E_{EP}=(d-1)^\frac{n-1}{2}.
\end{equation}
These formula are direct consequences of Lemma \ref{lemma: E(n,d)} and the fact that $v(\mathbf{x})C v(\mathbf{y})$ is a generating function, which generalises the univariate case 
\begin{equation*}
v(\mathbf{x})^TC v(\mathbf{y})=\sum\limits_{\substack{0\leq k_{1}, ..., k_{n}\leq d-1,\\ \sum^{n}\limits_{i = 1}k_i = d-1  }}\begin{pmatrix}
d-1\\
k_{1}, ..., k_n
\end{pmatrix} \prod\limits_{i=1}^{n}x^{k_i}y^{k_i} = (1+\mathbf{x}\cdot\mathbf{y})^{d-1}.
\end{equation*}
As mentioned in Remark \ref{re: elliptic polynomial}, in our case $v(\mathbf{x})^TC v(\mathbf{y})$ is no longer a generating function due to the square of the multinomial coefficients. Motivated by this and Theorem \ref{theo: behavior of E2} for the univariate case, we expect that the conjecture holds for the general case (i.e. $d$-player $n$-strategy normal evolutionary games). 
\subsection{Numerical results}
\label{sec: simulation}
In this section, we provide numerical simulations for all the (main) results obtained in the previous sections. The following figures are plotted. In the following list, (1) to (4) are to illustrate and confirm the analytical results obtained in the previous sections. The others, from (5) to (10), are numerical simulations for the games with large $d$ and $n$. They also  provide numerical support for Conjecture \ref{con: conjecture}.
\begin{enumerate}[(1)]
\item $f_d(t)$ and $f_d(t)/(d-1)$ as functions of $t$ for different values of $d$, see Figures \ref{fig:fd}a-b. Figure \ref{fig:fd}a illustrates the fourth property in Proposition \ref{pro: properties of f} that $f_d(t)$ is increasing as a function of $t$.  Figure \ref{fig:fd}b explains the scaling by showing that $f_d(t)/(d-1)$ is a bounded and decreasing function.
\item $f_d(t)$ and $f_d(t)/(d-1)$ as functions of $d$ for different values of $t$, see Figures \ref{fig:fd}c-d. These figures show that $f_d(t)$ increases with $d$ while $f_d(t)/(d-1)$ decreases, which are in agreement  with Conjecture \ref{cojecture: fd increases} and Theorem \ref{theo: fd/d decreases}. 
\item $f_d(x)$ and $f_d(x)/(d-1)$ as functions of the frequency $x$.
\item $\frac{\ln E(2,d)}{\ln (d-1)}$ as a function of $d$, see Figure \ref{fig:lnEdivlnD}a. This figure demonstrates Theorem \ref{theo: behavior of E2} on the convergence of $\frac{\ln E(2,d)}{\ln (d-1)}$ to $1/2$ as $d$ tends to infinity.
\item $f_{3,d}(t_1,t_2)$ as a function of $\mathbf{t}=(t_1,t_2)$ for different values of $d$, see Figures \ref{fig:f_n3n4}a-c. 
\item $f_{3,d}(t_1,t_2)$ as a function of $d$ for different values of $\mathbf{t}=(t_1,t_2)$, see Figure \ref{fig:lnfd34_depend_d}a.
\item $f_{4,d}(t_1,t_2,t_3)$ as a function of $\mathbf{t}=(t_1,t_2,t_3)$ for different values of $d$, see Figures \ref{fig:f_n3n4}d-f.
\item  $f_{4,d}(t_1,t_2,t_3)$ as a function of $d$ for different values of $\mathbf{t}=(t_1,t_2,t_3)$, see  Figure \ref{fig:lnfd34_depend_d}b.
\item $\frac{\ln E(3,d)}{\ln (d-1)}$ as a function of $d$, see  Figure \ref{fig:lnEdivlnD}b.
\item $\frac{\ln E(4,d)}{\ln (d-1)}$ as a function of $d$, see Figure \ref{fig:lnEdivlnD}c.
\end{enumerate}

In Figures  \ref{fig:f_n3n4}, we provide numerical results of  $f_{n,d}(\mathbf{t})$  for $n = 3$ and $n = 4$. We observe that the density function decreases  with $t_i$ (namely, $t_1$ and $t_2$ for $n = 3$, and $t_1$, $t_2$ and $t_3$ for $n = 4$) and increases with $d$. We conjecture  that for the general $d$-player $n$-strategy normal evolutionary game, the density function decreases  with $t_i$ and increases with $d$. 

%\red{Note that, similarly to Remark \ref{remark:from_t_to_x}, the transformation $t_i = \frac{x_i}{x_n}$ reserves any monotonicity property of $f_{n,d}(\mathbf{t})$; that is, if $f_{n,d}(\mathbf{t})$ is a decreasing function of $t_i$ it is also a decreasing function of $x_i$.   It is because $t_i$ is an increasing function of $x_i$. }

Figures \ref{fig:lnEdivlnD}b and \ref{fig:lnEdivlnD}c support Conjecture \ref{con: conjecture}. From these two figures, one also can see the complexity of the problem when $d$ increases. We are able to run simulations for $d$ up to $10 000$ for $n=2$, up to $400$ for $n=3$ and only up to $20$ for $n=4$.

\begin{figure}[ht]
\centering
\includegraphics[width = 0.8\linewidth]{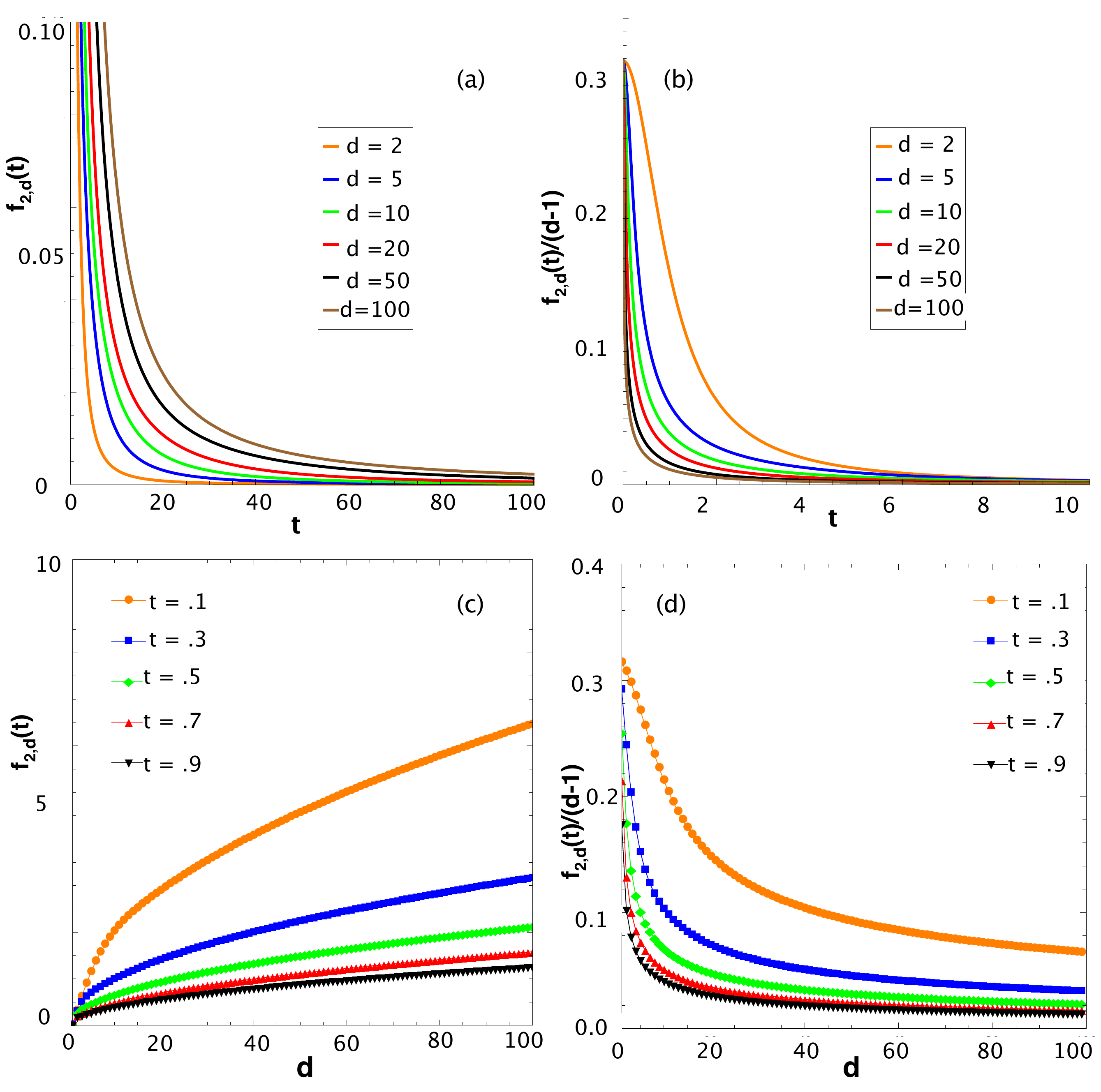}
\caption{\textbf{(a) Plot of $f_{2,d}(t)$ and (b)  of $f_{2,d}(t)/(d-1)$ as functions of $t$, for different values of $d$}. We observe that both functions  decrease with $t$, which are in accordance with Proposition \ref{pro: properties of f}. The second function is bounded from above, having maximum at $t = 0$, which agrees with Theorem \ref{theo: concentration}.   \textbf{(c) Plot of $f_{2,d}(t)$ and (d)  of $f_{2,d}(t)/(d-1)$ as functions of $d$, for different values of $t$}. We observe that $f_{2,d}(t)$ increases with $d$, while  $f_{2,d}(t)/(d-1)$ decreases, which are in agreement with Conjecture \ref{cojecture: fd increases} and Theorem \ref{theo: fd/d decreases}.   All the results were obtained  numerically  using Mathematica. }
\label{fig:fd}
\end{figure} 
\begin{figure}[ht]
\centering
\includegraphics[width =\linewidth]{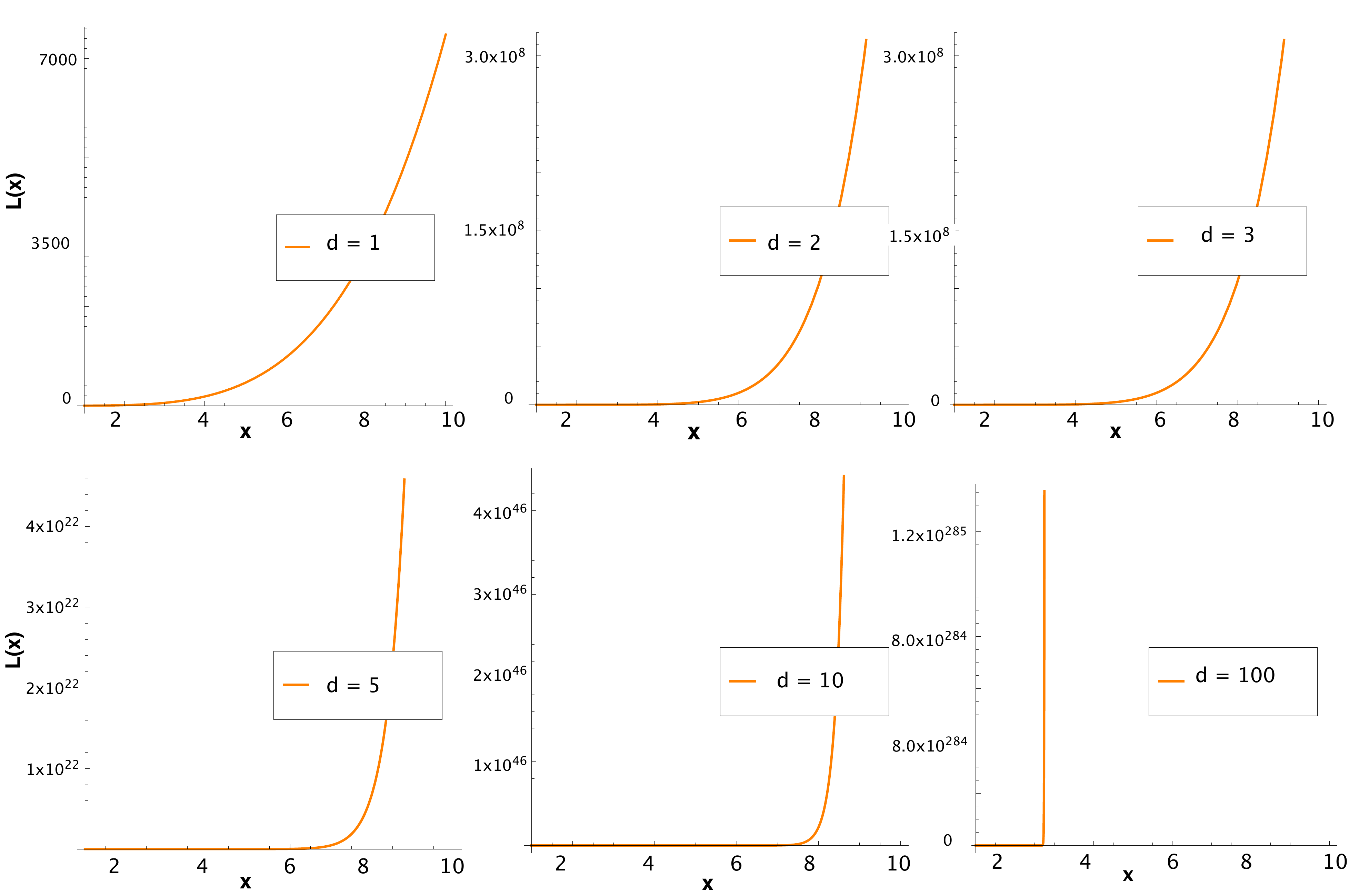}
\caption{\textbf{Plot of $L_d(x):=(2d+1)P_d^4 \red{-}   P_{d-1}^2 \left[ (2d-1) P_{d+1}^2 + 2 P_d^2  \right] 
$} for different values of $d$. We observe that $L_d(x)$ is always non-negative, thereby  supporting Conjecture \ref{cojecture: fd increases}, since the inequality  $L_d(x) \geq 0$ is the sufficient condition for the conjecture, as shown in Proposition \ref{prop: condition for f increase}}.
\label{fig:fd increase}
\end{figure} 
%
%\begin{figure}[ht]
%\centering
%\includegraphics[width = 0.8\linewidth]{figures/panel_f2d_depend_d.pdf}
%\caption{\textbf{(a) Plot of $f_{2,d}(x)$ and (b) plot of $f_{2,d}(x)/(d-1)$ as functions of $x$ (note that $t = \frac{x}{1-x}$), for different values of $d$}. We observe  that $f_{2,d}(x)$  increases with $x$, while  $f_{2,d}(x)/(d-1)$   decreases.   All the results were obtained  numerically  using Mathematica. }
%\label{fig:f2d_depend_frequency_x}
%\end{figure} 

\begin{figure}[ht]
\centering
\includegraphics[width = \linewidth]{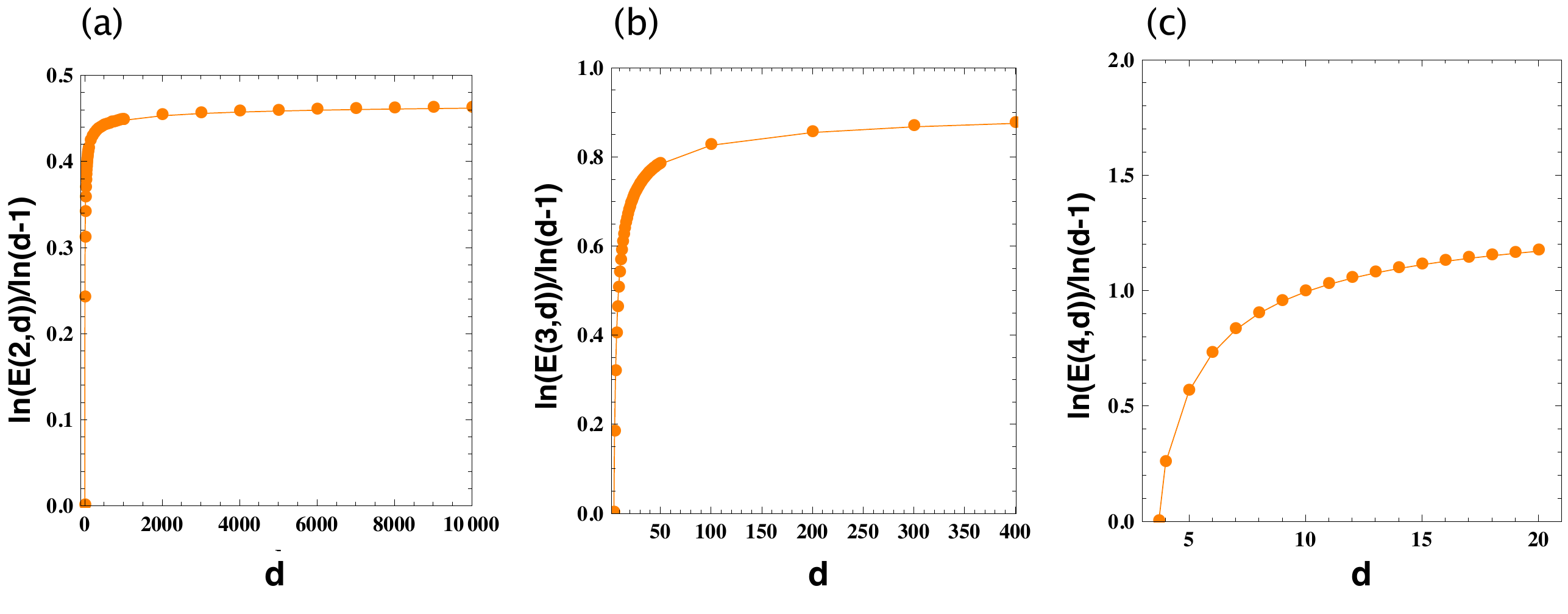}
\caption{\textbf{Plot of $\frac{\ln E(n,d)}{\ln (d-1)}$  as functions of $d$, (a) for $n = 2$, (b) for $n = 3$ and (c) for $n = 4$}. The results confirm the asymptotic behaviour for $n = 2$ as described  in Theorem \ref{theo: behavior of E2},  and clearly support Conjecture \ref{con: conjecture} for the general case.   All the results were obtained  numerically  using Mathematica. }
\label{fig:lnEdivlnD}
\end{figure} 

\begin{figure}[ht]
\centering
\includegraphics[width = \linewidth]{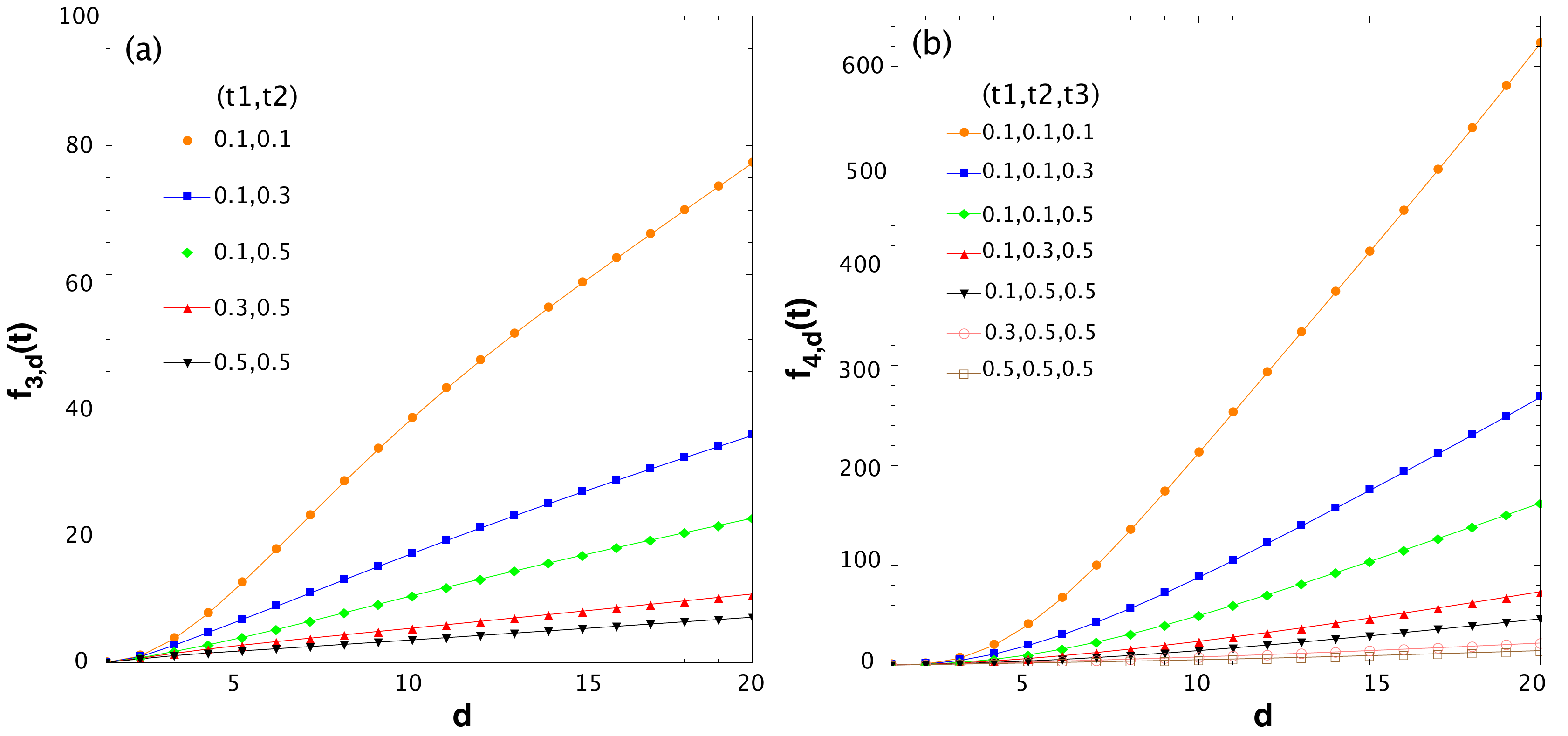}
\caption{\textbf{Plot of (a)  $f_{3,d}(t_1,t_2)$ for different values of $(t_1,t_2)$ and (b)  $f_{4,d}(t_1,t_2)$  for different values of $(t_1,t_2,t_3)$, both  as a function of $d$}. We observe  that the functions increase with $d$ while decrease with $t_i$, with $i = 1,2$ in  panel  (a) and $i = 1,2,3$ in panel (b).   All the results were obtained  numerically  using Mathematica. }
\label{fig:lnfd34_depend_d}
\end{figure} 
%
%\begin{figure}[ht]
%\centering
%\includegraphics[width = \linewidth]{figures/plots_f_depend_d.pdf}
%\caption{\textbf{(left) plot of $f_d(t)$ and (right) plot of $f_d(t)/(d-1)$ as functions of $d$, for different values of $t$}. We can see that $f_d(t)$  increases with $d$, but  $f_d(t)/(d-1)$   decreases with $d$.  The results were obtained  numerically  using Mathematica. }
%\label{fig:f_div_d}
%\end{figure} 

%\begin{figure}[ht]
%\centering
%\includegraphics[width = \linewidth]{figures/plot_f(t1,t2)_n_eq_3_scaled01.pdf}
%\caption{\textbf{ $f_d(t_1,t_2)$ as a function of $t_1$ and $t_2$, for different $d$}.  The results were obtained  numerically  using Mathematica. }
%\label{fig:f_n3}
%\end{figure} 
\begin{figure}[ht]
\centering
\includegraphics[width = \linewidth]{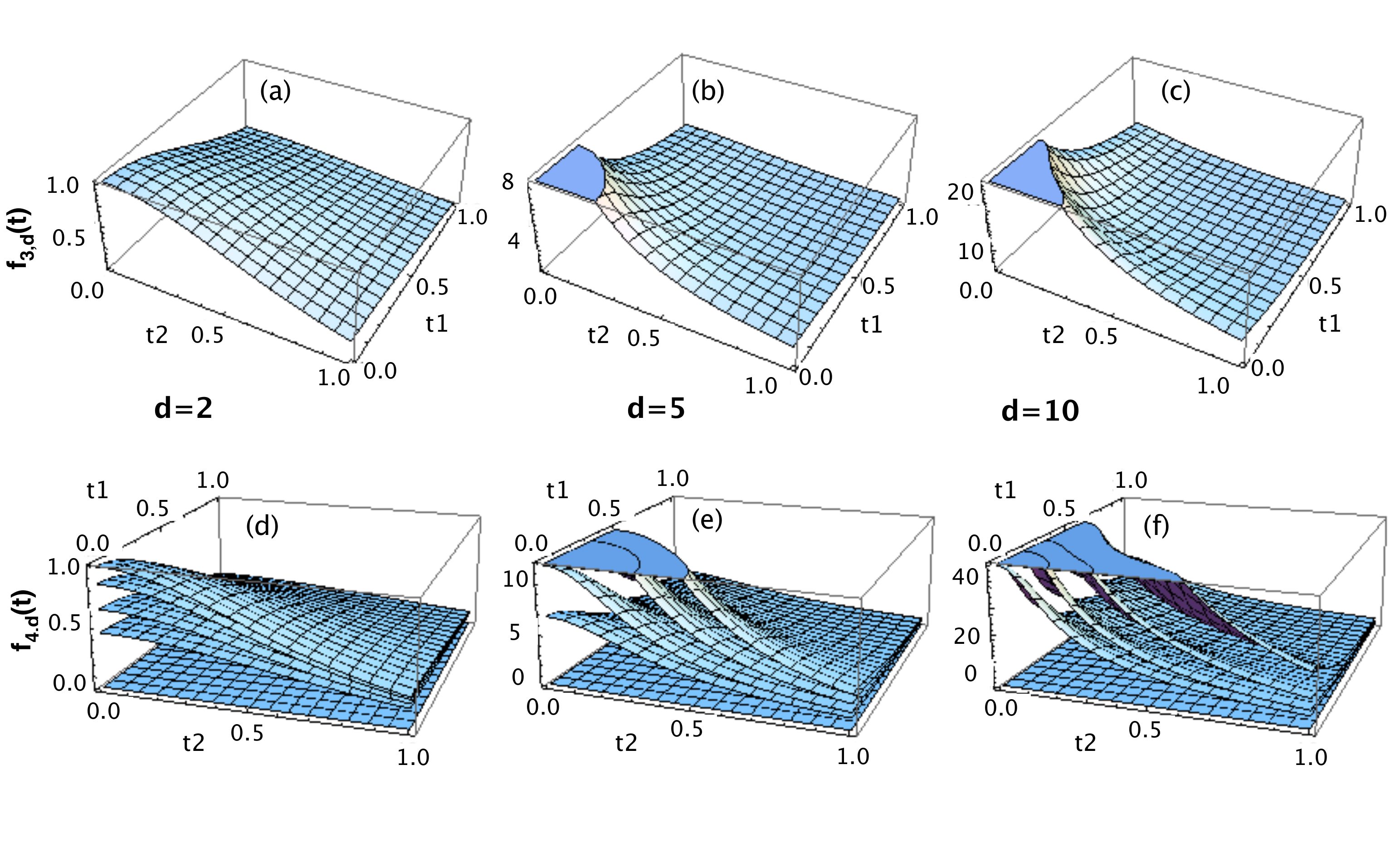}
\caption{\textbf{Plot of (a-c)  $f_{3,d}(t_1,t_2)$ as a function  of $(t_1,t_2)$; and (d-f)  $f_{4,d}(t_1,t_2)$  as a function of $(t_1,t_2)$, for different values of $t_3$}.  We observe  that both functions increase with $d$ while decrease with $t_i$, with $i = 1,2$ in  panels  (a)-(c) and $i = 1,2,3$ in panels (d)-(f).   In panels (a) and (d), $d = 2$; in panels (b) and (e), $d = 5$; in panels (c) and (f), $d = 10$. In panels (d)-(f), the surfaces, from bottom to top, correspond to $t_3 = 0.1, \ 0.3, \ 0.5, \ 0.7$ and $10$, respectively.  All the results were obtained  numerically  using Mathematica. }
\label{fig:f_n3n4}
\end{figure}

\begin{figure}[ht]
\centering
\includegraphics[width = \linewidth]{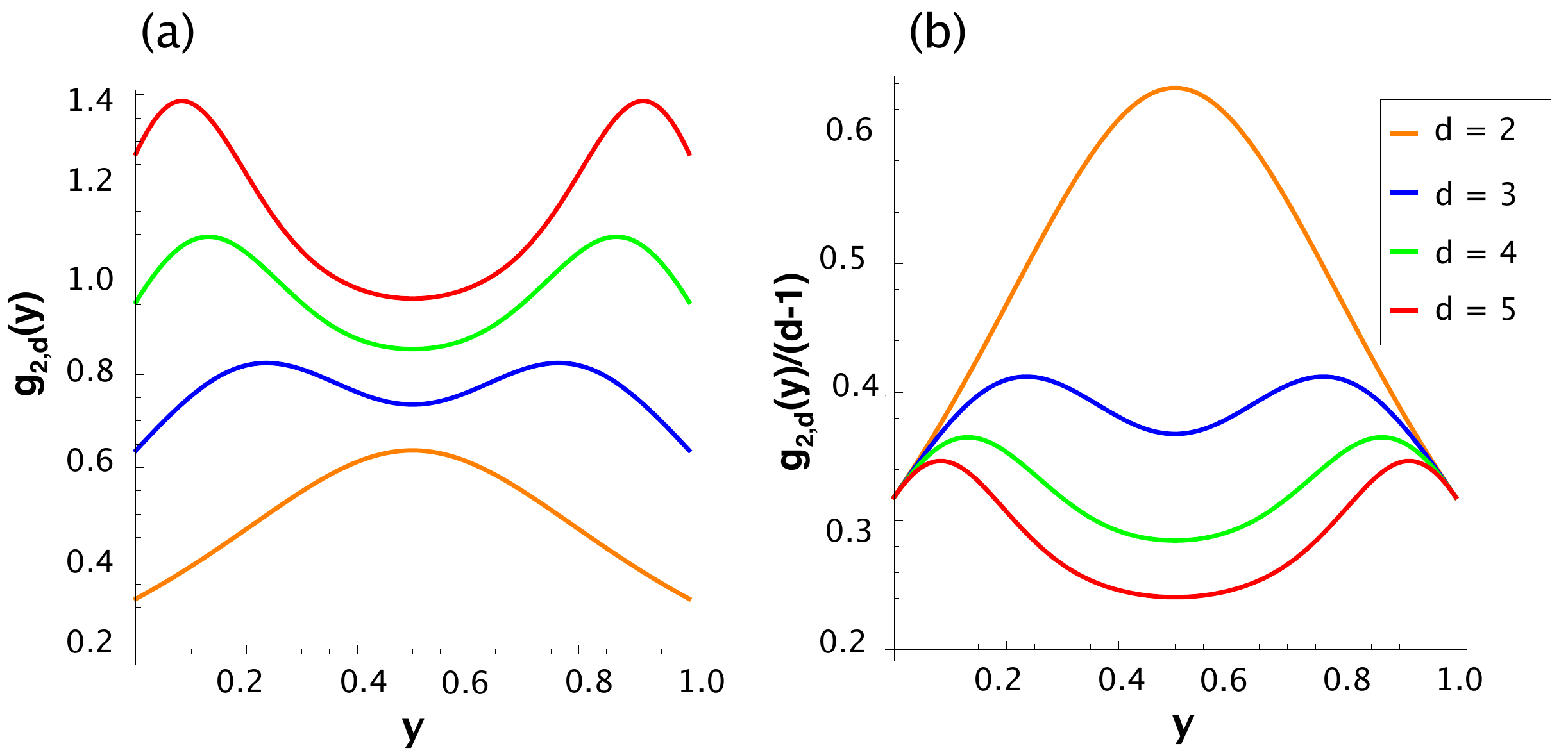}
\caption{\textbf{(a) Plot of $g_{2,d}(y)$ and (b)  of $g_{2,d}(y)/(d-1)$ as functions of $y$ (note that $t = \frac{y}{1-y}$), for different values of $d$}. We observe  that both functions    are symmetric about  the line $y = 1/2$, in accordance with Lemma \ref{lemma:symmetry of g}. The first function increases with $d$ while the second one decreases. All the results were obtained  numerically  using Mathematica.} 
\label{fig:f2d_depend_frequency_x}
\end{figure}

\begin{figure}[ht]
\centering
\includegraphics[width = \linewidth]{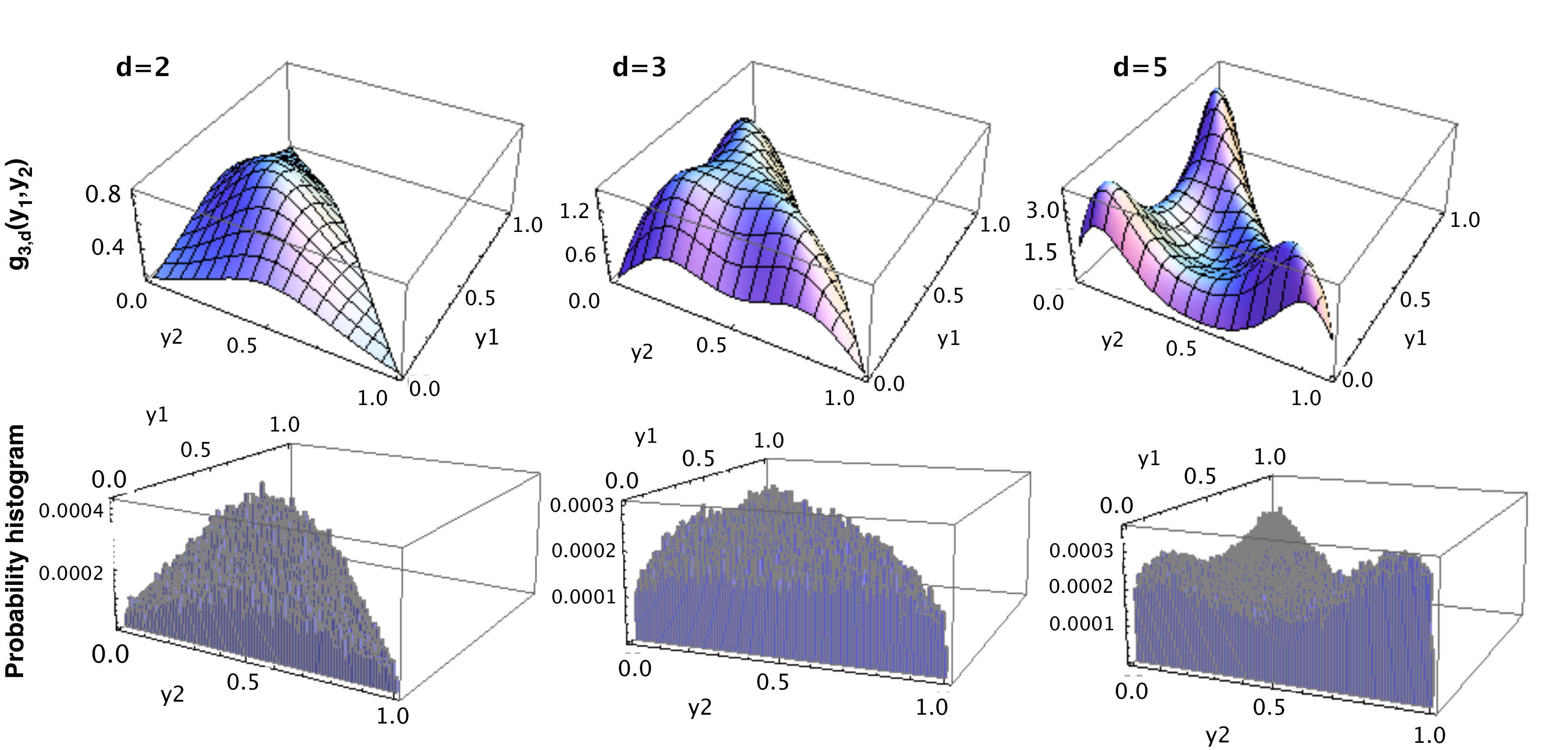}
\caption{\textbf{ (Top row)}  Plot of the density function $g_{3,d}(y_1, y_2)$ for different values of $d$ (here $y_1$ and $y_2$ are fractions of strategy 1 and 2, respectively, and $1-y_1-y_2$ is that of the third strategy). \textbf{(Bottom row)} Probability histograms plots of results from solving the system of equations  \ref{eq: eqn for fitness2} with $n = 3$ and for different $d$, where the payoff entries $\alpha$'s are (independently) sampled from Gaussian distribution with mean 0 and standard deviation of 0.5 ($10^6$ samples in total). The probability histograms have similar shapes to $g_{3,d}(y_1, y_2)$ (see also Table  \ref{table:Ed}).  All the results were obtained  numerically  using Mathematica.} 
\label{fig:theory vs samplings}
\end{figure}

\section{Discussion and outlook}
\label{sec: conclusion}
How do  equilibrium points in a general evolutionary game distribute if the payoff matrix entries are randomly drawn, and furthermore, how do they behave when the numbers of players and strategies change?
To address  these important questions regarding generic properties of general evolutionary  games, we have  analyzed here the density function, $f_{n,d}$,  and the expected number of (stable)  equilibrium points, $E(n,d)$ (respectively,  $\mathit{SE}(n,d)$), in a normal  $d$-player $n$-strategy  evolutionary  game. 
We have shown, analytically and using numerical simulations, that  $f_{2,d}(t)$  monotonically decreases with $t$  while it increases  with $d$.  The latter implies that, as the 
number of players in the game increases, it is more likely to see an equilibrium at a given point $t$. We also proved that its scaling with respect to the number of players in a game $d$, $\frac{f_{2,d}(t)}{d}$, decreases with $d$. 
More interestingly, we proved that this  density function asymptotically behaves in  the same manner  as $\sqrt{d-1}$ at any given   $t$ (i.e. regardless of the equilibrium point). 
 Similar monotonicity behaviors of the density function were observed numerically for games with larger numbers of strategies $n$. Additionally, we proved an upper bound for two-player game with an arbitrary number of strategies,  $n$. 
Briefly, our analysis of the expected density of equilibrium points has offered clear and fresh understanding about equilibrium distribution. Related to this distribution analysis, there have been some works analyzing patterns of ESSs and their attainability in concrete games, i.e. with a fixed payoff matrix  \cite{vickers1988patterns,cannings1988patterns,broom1994sequential}. Differently, our analysis addresses distribution of the general equilibrium points, and for generic games. Note also that those works  dealt with two-player games while we address here the general case (i.e. with arbitrary $d$). 
   
%\red{(To confirm: No existing literature draws attention to the study of density of equilibrium points, even for Nash equilibrium in classical game theory); }\red{The density distribution of equilibria seems to be similar to the question of patterns of ESS. } 
 
Regarding the expected  numbers of internal (stable) equilibrium points, first of all, as a result of  the described monotonicity properties of the density function, we established   analytically  that  $E(2,d)$ and $\mathit{SE}(2,d)$  increase with $d$ while their scaled forms, $\frac{E(2,d)}{d}$ and $\frac{\mathit{SE}(2,d)}{d}$, decrease with $d$.  Next, we proved a new upper bounds for $n = 2$,  with arbitrary  $d$: $E(2,d)\lesssim \sqrt{d-1}\ln(d-1)$. This upper bound is sharper than the one described in \cite{DH15} (which is also the only known one, to the best of our knowledge). As a consequence, a sharper upper bound for the number of expected number of stable equilibria can be established:  $\mathit{SE}(2,d)\lesssim \frac{1}{2}\sqrt{d-1}\ln(d-1)$.
More  importantly, that allowed us to derive  close-form limiting behaviors for such numbers:  $\lim\limits_{d\rightarrow\infty}\frac{\ln E(2,d)}{\ln(d-1)}=\frac{1}{2}$ and  
$\lim\limits_{d\rightarrow\infty}\frac{\ln \mathit{SE}(2,d)}{\ln(d-1)}=\frac{1}{2}$.  As such, apart from the mathematical elegance of our results, the analysis here has significantly improved  existing results on the studies of random evolutionary game theory   \cite{DH15,HTG12,gokhale:2010pn}. Moreover, generalizing these formulas for the general case, i.e. when the number of strategies $n$ is also arbitrary,  our  conjecture  that, $\lim\limits_{d\rightarrow\infty}\frac{\ln E(n,d)}{\ln(d-1)}=\frac{n-1}{2}$,  is nicely corroborated  by numerical simulations. 
Related to this analysis, there have been some works analyzing ESS  in  random evolutionary games \cite{haigh1988distribution,kontogiannis2009support,hart2008evolutionarily}. In both \cite{haigh1988distribution} and \cite{kontogiannis2009support}, the authors focused on the asymptotic behaviour of ESS with large  support sizes, i.e. considering also equilibrium points which are not internal, in random pairwise games. In a similar context, in  \cite{hart2008evolutionarily}, the authors studied ESS but with  support size of two, showing the asymptotic behaviors of such ESS when the number of strategies  varies. 
Differently from all these works (which  dealt exclusively with two-player interactions), our analysis copes with multi-player  games. Hence, our results have led to further understanding with respect to the  asymptotic behaviour of the expected number of stable equilibria for  multi-player random games.

Last but not least, as the density functions we analyzed here are closely related to Legendre polynomials, and actually, they are of the same form as the Bernstein polynomials, we have made a clear contribution to the longstanding  theory  of random polynomials  \cite{BP32,EK95,TaoVu14,NNV15TMP}(see again discussion in introduction).  
 We have derived asymptotic behaviors for the expected real zeros $E_{\B}$ of a random Bernstein polynomial, which, to our knowledge, had not been provided before. 
 Note that the  asymptotic behaviors and close forms of  the expected real zeros of some other well-known polynomials have been derived by other authors.  
For instance, the  Weyl polynomials for $a_i\colonequals\frac{1}{i!}$; the elliptic polynomials or binomial polynomials with $a_i\colonequals\sqrt{\begin{pmatrix}
d-1\\
i
\end{pmatrix}}$; and Kac polynomials with $a_i\colonequals 1$.
The difference between the polynomial studied  here (i.e. Bernstein) with the elliptic case: $a_i=\begin{pmatrix}
d-1\\
i
\end{pmatrix}$ are binomial coefficients, not their square root. In the elliptic case, $v(x)^TCv(y)=\sum_{i=1}^{d-1}\begin{pmatrix}
d-1\\
i
\end{pmatrix}x^iy^i=(1+xy)^{d-1}$, and as a result, $E(2,d)=\sqrt{d-1}$; While in our case, because of the square in the coefficients,
$v(x)^TCv(y)=\sum_{i=1}^{d-1}\begin{pmatrix}
d-1\\
i
\end{pmatrix}^2x^iy^i$, is no longer a generating function. 
Indeed, due to this difficulty, the analysis of the Bernstein polynomials is still rather limited, see for instance \cite{armentano2009note} for a detailed discussion. 

In short, our analysis has provided new  understanding about the generic behaviors of equilibrium  points in a general evolutionary game, namely, how they distribute and change in number  when the  number of players and that of the strategies in the game, are magnified.
\section{Appendix}
Detailed proofs of some lemmas and theorems in the previous sections are presented in this appendix.
\subsection{Proof of Lemma \ref{lem: connection btw Md and Pd}}
\label{app: lem: connection btw Md and Pd}
This relation has appeared in \cite[Exercise 101, Chapter 5]{GKP94}. For the readers' convenience, we provide a proof here.
\begin{proof}[Proof of Lemma \ref{lem: connection btw Md and Pd}]
Let $x=\frac{1+q}{1-q}$, from \eqref{eq: explicit reprentation of Legendre} we have
\begin{align*}
P_d\left(\frac{1+q}{1-q}\right)&=\frac{1}{2^d}\sum_{i=0}^d\begin{pmatrix}
d\\
i
\end{pmatrix}^2\left(\frac{1+q}{1-q}-1\right)^{d-i}\left(\frac{1+q}{1-q}+1\right)^i
\\&=\frac{1}{2^d}\sum_{i=0}^d\begin{pmatrix}
d\\
i
\end{pmatrix}^2\left(\frac{2q}{1-q}\right)^{d-i}\left(\frac{2}{1-q}\right)^i
\\&=\frac{1}{(1-q)^d}\sum_{i=0}^d\begin{pmatrix}
d\\
i
\end{pmatrix}^2q^{d-i}
\\&=\frac{1}{(1-q)^d}\sum_{i=0}^d\begin{pmatrix}
d\\
i
\end{pmatrix}^2q^{i}.
\end{align*}
Therefore,
\begin{equation*}
\sum_{i=0}^d\begin{pmatrix}
d\\
i
\end{pmatrix}^2q^{i}=(1-q)^d P_d\left(\frac{1+q}{1-q}\right).
\end{equation*}
By taking $q=t^2$, we obtain \eqref{eq: connection btw Md and Pd}.
\end{proof}
\subsection{Proof of Theorem \ref{theo: fd interm of Pd} }
\label{app: proof of Theorem fd interm of Pd}
\begin{proof}[Proof of Theorem \ref{theo: fd interm of Pd} ]

By taking the derivative of both sides in \eqref{eq: connection btw Md and Pd}, we obtain
\begin{align*}
M_{d+1}'(t)&=-2\,t\,d\,(1-t^2)^{d-1}P_d\left(\frac{1+t^2}{1-t^2}\right)+4t(1-t^2)^{d-2}P_d'\left(\frac{1+t^2}{1-t^2}\right)
\\&=-2\,t\,d\,\frac{M_{d+1}(t)}{1-t^2}+4t(1-t^2)^{d-2}P_d'\left(\frac{1+t^2}{1-t^2}\right).
\end{align*}
It follows that
\begin{equation*}
\frac{M_{d+1}'(t)}{M_{d+1}(t)}=\frac{-2\,t\,d}{1-t^2}+\frac{4t}{(1-t^2)^2}\frac{P_d'}{P_d}\left(\frac{1+t^2}{1-t^2}\right).
\end{equation*}
Now we compute the expression inside the square-root of the right-hand side of \eqref{eq: fd interms of Md}. We have
\begin{equation*}
t\frac{M_{d+1}'(t)}{M_{d+1}(t)}=\frac{-2\,t^2\,d}{1-t^2}+\frac{4t^2}{(1-t^2)^2}\frac{P_d'}{P_d}\left(\frac{1+t^2}{1-t^2}\right),
\end{equation*}
and
\begin{equation*}
\left(t\frac{M_{d+1}'(t)}{M_{d+1}(t)}\right)'=-\frac{4\,t\,d}{(1-t^2)^2}+\frac{8t(1+t^2)}{(1-t^2)^3}\frac{P_d'}{P_d}\left(\frac{1+t^2}{1-t^2}\right)+\frac{16t^3}{(1-t^2)^4}\frac{P_d''\,P_d-(P'_d)^2}{P_d^2}\left(\frac{1+t^2}{1-t^2}\right).
\end{equation*}
Substituting this expression into \eqref{eq: fd interms of Pd}, we get
\begin{equation}
\label{eq: fd interm of Pd 2}
(2\pi f_{d+1}(t))^2=-\frac{4\,d}{(1-t^2)^2}+\frac{8(1+t^2)}{(1-t^2)^3}\frac{P_d'}{P_d}\left(\frac{1+t^2}{1-t^2}\right)+\frac{16t^2}{(1-t^2)^4}\frac{P_d''\,P_d-(P'_d)^2}{P_d^2}\left(\frac{1+t^2}{1-t^2}\right).
\end{equation}
According to \eqref{eq: Legendre equation}, the Legendre polynomial $P_d$ satisfies the following equation for all $x\in \R$
\begin{equation*}
-2x P_d'(x)+(1-x^2)P''_d(x)=-d(d+1)P_d(x).
\end{equation*}
As a consequence, we obtain
\begin{equation*}
\frac{P''_d(x)}{P_d(x)}=\frac{1}{1-x^2}\left(2x\frac{P_d'(x)}{P_d(x)}-d(d+1)\right).
\end{equation*}
Substituting this expression into \eqref{eq: fd interm of Pd 2} with $x=\frac{1+t^2}{1-t^2}$, we get
\begin{align*}
(2\pi f_{d+1}(t))^2&=-\frac{4\,d}{(1-t^2)^2}+\frac{8(1+t^2)}{(1-t^2)^3}\frac{P_d'}{P_d}\left(\frac{1+t^2}{1-t^2}\right)
\\&\qquad+\frac{16t^2}{(1-t^2)^4}\left\{\frac{1}{1-\left(\frac{1+t^2}{1-t^2}\right)^2}\left[2\frac{1+t^2}{1-t^2}\frac{P_d'}{P_d}\left(\frac{1+t^2}{1-t^2}\right)-d(d+1)\right]-\left(\frac{P_d'}{P_d}\right)^2\left(\frac{1+t^2}{1-t^2}\right)\right\}
\\&=\frac{4d^2}{(1-t^2)^2}-\frac{16t^2}{(1-t^2)^4}\left(\frac{P_d'}{P_d}\right)^2\left(\frac{1+t^2}{1-t^2}\right),
\end{align*}
which is the claimed relation \eqref{eq: fd interms of Pd}.
\end{proof}
\subsection{Proof of Theorem \ref{theo: fd interms of Pd and Pd-1}}
\label{app: proof of Theorem fd interms of Pd and Pd-1}
\begin{proof}[Proof of Theorem \ref{theo: fd interms of Pd and Pd-1}]
Using the following relation of the Legendre polynomials for all $x\in\R$
\begin{equation*}
P_d'(x)=\frac{d}{x^2-1}\left(xP_d(x)-P_{d-1}(x)\right),
\end{equation*}
we get
\begin{equation}
\label{eq: P'd/Pd}
\frac{P_d'(x)}{P_d(x)}=\frac{d}{x^2-1}\left(x-\frac{P_{d-1}(x)}{P_d(x)}\right).
\end{equation}
In particular, taking $x=\frac{1+t^2}{1-t^2}$, we obtain
\begin{align*}
\frac{P'_d}{P_d}\left(\frac{1+t^2}{1-t^2}\right)&=\frac{d}{\left(\frac{1+t^2}{1-t^2}\right)^2-1}\left[\frac{1+t^2}{1-t^2}-\frac{P_{d-1}}{P_d}\left(\frac{1+t^2}{1-t^2}\right)\right]
\\&=\frac{d(1-t^2)^2}{4t^2}\left[\frac{1+t^2}{1-t^2}-\frac{P_{d-1}}{P_d}\left(\frac{1+t^2}{1-t^2}\right)\right].
\end{align*}
Substituting this expression into \eqref{eq: fd interms of Pd}, we achieve
\begin{equation*}
(2\pi f_{d+1}(t))^2=\frac{4d^2}{(1-t^2)^2}-\frac{d^2}{t^2}\left[\frac{1+t^2}{1-t^2}-\frac{P_{d-1}}{P_d}\left(\frac{1+t^2}{1-t^2}\right)\right]^2,
\end{equation*} 
which is \eqref{eq: main fd interms of Pd}.
\end{proof}
\subsection{Proof of Lemma \ref{lem: Turan inequality}}
\label{app: proof of lemma Turan inequality}
\begin{proof}[Proof of Lemma \ref{lem: Turan inequality}]
This lemma follows directly from \cite{constantinescu2005inequality} (Theorem 2.1) where the authors proved that 
$$P_d(x)^2 -  P_{d+1}(x)P_{d-1}(x) = \frac{1-x^2}{d (d+1)} \left( \sum_{i=1}^d \frac{1}{i} + \sum_{i=1}^{d-1} \frac{1}{i+1}  \sum_{j=1}^i (2j+1) P_j^2(x)\right), $$
which is negative for all  $|x| \geq 1$.
\end{proof}
\subsection{Proof of Proposition \ref{prop: condition for f increase}}
\label{app: proof of lemma condition for f increase}
\begin{proof}[Proof of Proposition \ref{prop: condition for f increase}]
We will prove that
\begin{equation}
f^2_{d+2}(t)-f^2_{d+1}(t)\geq 0 \quad\Longleftrightarrow\quad \eqref{eq: condition for fd increase}.
\end{equation}
From \eqref{eq: main fd interms of Pd}, we have
\begin{align*}
4\pi^2(f^2_{d+2}(t)-f^2_{d+1}(t))&=\frac{4(d+1)^2}{(1-t^2)^2}-\frac{(d+1)^2}{t^2}\left[\frac{1+t^2}{1-t^2}-\frac{P_{d}}{P_{d+1}}\left(\frac{1+t^2}{1-t^2}\right)\right]^2
\\&\qquad-\frac{4d^2}{(1-t^2)^2}+\frac{d^2}{t^2}\left[\frac{1+t^2}{1-t^2}-\frac{P_{d-1}}{P_d}\left(\frac{1+t^2}{1-t^2}\right)\right]^2
\\&=\frac{4(2d+1)}{(1-t^2)^2}-\frac{1}{t^2}\left[\frac{1+t^2}{1-t^2}-\frac{(d+1)P_d^2-dP_{d-1}P_{d+1}}{P_dP_{d+1}}\left(\frac{1+t^2}{1-t^2}\right)\right]\times
\\&\qquad\left[(2d+1)\frac{1+t^2}{1-t^2}-\frac{(d+1)P_d^2+dP_{d-1}P_{d+1}}{P_dP_{d+1}}\left(\frac{1+t^2}{1-t^2}\right)\right].
\end{align*}
Therefore $f_d(t)$ is increasing as a function of $d$ if and only if 
\begin{align*}
&\left[\frac{1+t^2}{1-t^2}-\frac{(d+1)P_d^2-dP_{d-1}P_{d+1}}{P_dP_{d+1}}\left(\frac{1+t^2}{1-t^2}\right)\right]\times\left[(2d+1)\frac{1+t^2}{1-t^2}-\frac{(d+1)P_d^2+dP_{d-1}P_{d+1}}{P_dP_{d+1}}\left(\frac{1+t^2}{1-t^2}\right)\right]
\\&\qquad\leq \frac{4(2d+1)t^2}{(1-t^2)^2}.
\end{align*}
We re-write the expression above using the variable $x$, using the relation $x^2-1=\frac{4t^2}{(1-t^2)^2}$, as follows
\begin{equation}
\label{eq: ineq}
\left[x-\frac{(d+1)P_d^2-dP_{d-1}P_{d+1}}{P_dP_{d+1}}(x)\right]\times\left[(2d+1)x-\frac{(d+1)P_d^2+dP_{d-1}P_{d+1}}{P_dP_{d+1}}(x)\right]\leq (2d+1)(x^2-1).
\end{equation}
We now simplify this  expression  using the recursion relation of the Legendre polynomials, i.e.  $dP_{d-1}=(2d+1)xP_d-(d+1)P_{d+1}$. Namely, we have
\begin{align*}
xP_dP_{d+1}-(d+1)P_d^2+dP_{d-1}P_{d+1}&=xP_dP_{d+1}-(d+1)P_d^2+[(2d+1)xP_d-(d+1)P_{d+1}]P_{d+1}
\\&=(d+1)[-P_d^2-P_{d+1}^2+2xP_dP_{d+1}]
\end{align*}
and 
\begin{align*}
&(2d+1)xP_dP_{d+1}-(d+1)P_d^2-dP_{d-1}P_{d+1}
\\&\qquad=(2d+1)xP_dP_{d+1}-(d+1)P_d^2-[(2d+1)xP_d-(d+1)P_{d+1}]P_{d+1}
\\&\qquad=(d+1)(P^2_{d+1}-P_d^2).
\end{align*}
Substituting these calculations into \eqref{eq: ineq} we obtain \eqref{eq: condition for fd increase}.

To prove the second assertion of Proposition \ref{prop: condition for f increase}, we proceed as follows.
Let 
\begin{align*}
H_{d+1}  & = (d+1)^2\left[P^2_{d+1}(x)+P^2_{d}(x)-2x P_{d+1}(x)P_d(x)\right] \\ 
&\qquad= \left[(2d+1)x P_{d}(x)- d P_{d-1}(x)\right]^2 +   (d+1)^2 P^2_{d}(x)
\\&\qquad\qquad-2x (d+1)  \left[(2d+1)x P_{d}(x)- d P_{d-1}(x)\right]P_d(x) \\
&\qquad=\left[(2d+1)^2 x^2 + (d+1)^2 - 2(d+1)(2d+1) x^2  \right]  P^2_{d}(x)
\\&\qquad\qquad-\left[2d(2d+1)x  - 2d(d+1) x  \right] P_{d}(x) P_{d-1}(x)
+ d^2 P^2_{d-1}(x)   \\
&\qquad = \left[d^2 + (2d+1)(1-x^2) \right] P^2_{d}(x)+d^2 P^2_{d-1}(x)-2x d^2 P_{d}(x)P_{d-1}(x)  \\
%&\qquad\geq \left[(2d+1)x - d-1 \right]^2 P^2_{d}(x)+d^2 P^2_{d-1}(x)-2x d^2 P_{d}(x)P_{d-1}(x)  \ \ \text{(since $x \geq 1$)}\\
&\qquad= d^2 \left[P^2_{d}(x)+P^2_{d-1}(x)-2x P_{d}(x)P_{d-1}(x)\right]   + (2d+1)(1-x^2)  P^2_{d}(x) \\
&\qquad= H_d + (2d+1)(1-x^2)  P^2_{d}(x).
\end{align*}
Hence,  the expression in \eqref{eq: condition for fd increase} can be simplified as follows 
\begin{align*}
&H_{d+1} [P^2_{d+1}(x)-P^2_{d}(x)] +(2d+1)(x^2-1)P^2_d(x)P^2_{d+1}(x) \\
 &= [H_d + (2d+1)(1-x^2)  P^2_{d}(x)][P^2_{d+1}(x)-P^2_{d}(x)] +(2d+1)(x^2-1)P^2_d(x)P^2_{d+1}(x) \\
   &= H_d [P^2_{d+1}(x)-P^2_{d}(x)] +(2d+1)(x^2-1)P^4_d(x) \\
     &= \frac{P^2_{d+1}(x)-P^2_{d}(x)}{P^2_{d}(x)-P^2_{d-1}(x)}\left[H_d  \left(P^2_{d}(x)-P^2_{d-1}(x)\right) +(2d-1)(x^2-1)P^2_d(x)P^2_{d-1}(x) \cdot Q \right],
%     
%     
%      \\
     %&\geq \frac{P^2_d(x)}{P^2_{d-1}(x)}\left[H_d \cdot \frac{ P^2_{d-1}(x) P^2_{d+1}(x)- P^2_{d-1}(x)P^2_{d}(x)}{P^2_d(x)} +(2d-1)(x^2-1)P^2_d(x)P^2_{d-1}(x)\right] \text {(since} \ x \geq 1)\\
%     &\geq \frac{P^2_d(x)}{P^2_{d-1}(x)}\left[H_d \cdot \frac{ P^4_{d}(x)- P^2_{d-1}(x)P^2_{d}(x)}{P^2_d(x)} +(2d-1)(x^2-1)P^2_d(x)P^2_{d-1}(x)\right]  \text{(see  Lemma \ref{lem: Turan inequality})} \\&\text{	WRONG AGAIN $H_d < 0$}\\ 
%       &= \frac{P^2_d(x)}{P^2_{d-1}(x)}\left[H_d \cdot  \left( P^2_{d}(x)- P^2_{d-1}(x)\right) +(2d-1)(x^2-1)P^2_d(x)P^2_{d-1}(x)\right], \\
       %&  \cdots \\
  %& \geq\frac{P^2_d(x)}{P^2_1(x)}\left[H_1 \cdot  \left(P^2_{1}(x)- P^2_{0}(x)\right) +(x^2-1)P^2_1(x)P^2_0(x)\right] \\
 % &= P^2_d(x) (x^2-1) \\
  % & \geq 0,
\end{align*}
where $Q  = \frac{(2d+1)P^2_{d}(P^2_{d}-P^2_{d-1})}{(2d-1)P^2_{d-1}(P^2_{d+1}-P^2_{d})} $. 
Suppose that \eqref{eq: condition for fd increase 2} is true, i.e., 
\begin{equation*} 
(2d+1)P_d^4 \geq   P_{d-1}^2 \left[ (2d-1) P_{d+1}^2 + 2 P_d^2  \right].
\end{equation*}
This implies that $Q\geq 1$ for all $x$ and $d$. Then it follows that
\begin{align}
\label{eq: H inequality}
&H_{d+1} [P^2_{d+1}(x)-P^2_{d}(x)] +(2d+1)(x^2-1)P^2_d(x)P^2_{d+1}(x)\nonumber
\\&\qquad\geq \frac{P^2_{d+1}(x)-P^2_{d}(x)}{P^2_{d}(x)-P^2_{d-1}(x)}\left[H_d  \left(P^2_{d}(x)-P^2_{d-1}(x)\right) +(2d-1)(x^2-1)P^2_d(x)P^2_{d-1}(x)\right]\nonumber
\\&\qquad\geq  \frac{P^2_{d+1}(x)-P^2_{d}(x)}{P^2_{d}(x)-P^2_{d-1}(x)}\times \frac{P^2_{d}(x)-P^2_{d-1}(x)}{P^2_{d-1}(x)-P^2_{d-2}(x)}\nonumber
\\&\qquad\qquad\times \left[H_{d-1}  \left(P^2_{d-1}(x)-P^2_{d-2}(x)\right) +(2d-3)(x^2-1)P^2_{d-1}(x)P^2_{d-2}(x)\right]\nonumber
\\&\qquad\geq\cdots\nonumber
\\&\qquad\geq \prod\limits_{i=1}^{d}\frac{P^2_{i+1}(x)-P^2_{i}(x)}{P^2_{i}(x)-P^2_{i-1}(x)}\times\left[H_1  \left(P^2_{1}(x)-P^2_{0}(x)\right) +(x^2-1)P^2_{1}(x)P^2_{0}(x)\right].
\end{align}
By definition of $H_d$, we have
\begin{equation*}
H_1=P_1^2(x)+P^2_0(x)-2x P_{1}(x)P_0(x)=x^2+1-2x^2=1-x^2.
\end{equation*}
Substituting this into \eqref{eq: H inequality}, we obtain
\begin{align*}
&H_{d+1} [P^2_{d+1}(x)-P^2_{d}(x)] +(2d+1)(x^2-1)P^2_d(x)P^2_{d+1}(x)
\\&\geq (x^2-1)\prod\limits_{i=1}^{d}\frac{P^2_{i+1}(x)-P^2_{i}(x)}{P^2_{i}(x)-P^2_{i-1}(x)}\\&=P^2_{d+1}(x)-P_d^2(x)\geq 0,
\end{align*}
i.e., the condition \eqref{eq: condition for fd increase} is satisfied.
%We prove that $Q_n \geq 1$, which is equivalent to 
%\begin{equation} 
%\frac{2d+1}{2d-1} \left(  \frac{P_d^2}{P_{d-1}^2 } - 1  \right) \geq   \frac{P_{d+1}^2}{P_{d}^2 } - 1 
%\end{equation}
%Another equivalent form of  $Q_n \geq 1$
%\begin{equation} 
%(2d+1)P_d^4 \geq   P_{d-1}^2 \left[ (2d-1) P_{d+1}^2 + 2 P_d^2  \right] 
%\end{equation}
\end{proof}

\begin{acknowledgements}
We would like to thank the anonymous referee for his/her useful suggestions that helped us to improve the presentation of the manuscript. 
\end{acknowledgements}

% BibTeX users please use one of
%\bibliographystyle{spbasic}      % basic style, author-year citations
%\bibliographystyle{spmpsci}      % mathematics and physical sciences
%\bibliographystyle{spphys}       % APS-like style for physics
%\bibliography{}   % name your BibTeX data base

% Non-BibTeX users please use
%\bibliography{GTbib}
%\bibliographystyle{plain}

\end{document}